\newcommand{\e}{\varepsilon}
\newcommand{\om}{\omega}
\newcommand{\M}{\mathcal{M}}
\newcommand{\la}{\lambda}
\newcommand{\fy}{\varphi}
\newcommand{\p}{\partial}
\newcommand{\I}{\infty}
\newcommand{\wt}[1]{\widetilde {#1}}
\newcommand{\ti}{\widetilde}
\newcommand{\R}{\mathbb{R}}
\newcommand{\K}{\mathcal{K}}
\newcommand{\UU}{\mathcal{U}}
\renewcommand{\S}{\mathcal{S}}
\newcommand{\T}{\mathcal{T}}
\renewcommand{\Im}{\mathop{\mathrm{Im}}}
\renewcommand{\bar}{\overline}
\renewcommand{\hat}{\widehat}
\renewcommand{\r}{\rho}
\numberwithin{equation}{section}
\newtheorem{thm}{Theorem}[section]
\newtheorem{lem}[thm]{Lemma}
\newtheorem{prop}[thm]{Proposition}
\theoremstyle{remark}
\newcommand{\ran}{\rangle}
\newcommand{\lan}{\langle}
\newcommand{\lec}{\lesssim}
\newcommand{\gec}{\gtrsim}
\newcommand{\EQ}[1]{\begin{equation} \begin{split} #1 \end{split} \end{equation}}
\newcommand{\Del}[1]{}
\newcommand{\CAS}[1]{\begin{cases} #1 \end{cases}}
\newcommand{\pt}{&}
\newcommand{\pr}{\\ &}
\newcommand{\pq}{\quad}
\newcommand{\pn}{}
\newcommand{\LR}[1]{{\lan #1 \ran}}
\newcommand{\de}{\delta}
\newcommand{\si}{\sigma}
\renewcommand{\t}{\tau}
\newcommand{\ka}{\kappa}
\newcommand{\ga}{\gamma}
\newcommand{\x}{\xi}
\newcommand{\na}{\nabla}
\renewcommand{\th}{\theta}
\newcommand{\B}{\mathcal{B}}
\newcommand{\De}{\Delta}
\newcommand{\Ga}{\Gamma}
\newcommand{\sg}{\mathfrak{s}}
\newcommand{\Sg}{\mathfrak{S}}
\newcommand{\Cu}{\bigcup}
\newcommand{\Ca}{\bigcap}
\newcommand{\HH}{\mathcal{H}}
\def\dist{\mathrm{dist}}
\def\sign{\mathrm{sign}}
\def\nn{}
\author{K.~Nakanishi}
\address{Department of Mathematics, Kyoto University\\ Kyoto 606-8502, Japan}
\email{n-kenji@math.kyoto-u.ac.jp}
\author{W.~Schlag}
\address{Department of  Mathematics, The University of Chicago\\ Chicago, IL 60615, U.S.A.} 
\email{schlag@math.uchicago.edu}
\title[Global dynamics above the ground state energy]{Global dynamics above the ground state energy\\ for the focusing nonlinear Klein-Gordon equation}
\begin{document}



\subjclass[2010]{35L70, 35Q55} 
\keywords{nonlinear wave equation, ground state, hyperbolic dynamics, stable manifold, unstable manifold, scattering theory, blow up}

\begin{abstract}
The analysis of global dynamics of nonlinear dispersive equations has a long history starting from small solutions. In this paper we study the
focusing, cubic, nonlinear Klein-Gordon equation 
in $\R^{3}$ with large radial data in the energy space. This equation admits a unique positive stationary solution $Q$, called the ground state. 
In 1975 Payne and Sattinger showed that solutions $u(t)$ with energy $E[u,\dot u]$ {\em strictly below} that of the ground state are divided into two classes, depending on a suitable functional $K(u)$: If $K(u)<0$, then one has finite time  blow-up, if $K(u)\ge 0$ global existence; moreover, these sets are invariant under the flow. 
Recently, Ibrahim, Masmoudi and the first author~\cite{IMN} improved this result by establishing scattering to zero for $K[u]\ge 0$ by means of a variant of the Kenig-Merle method~\cite{KM1}, \cite{KM2}. 
In this paper we go slightly beyond the ground state energy and we give a complete description of the evolution in that case. For example, in a small neighborhood of $Q$ one encounters the following trichotomy: on one side of a center-stable manifold one has finite-time blow-up for $t\ge0$, on the other side scattering to zero, and on the manifold itself one has scattering to~$Q$, both as $t\to+\infty$. In total, the class of data with energy at most slightly above that of $Q$ is divided into nine disjoint nonempty sets each displaying different asymptotic behavior as $t\to\pm\infty$, which includes solutions blowing up in one time direction and scattering to zero on the other. The 
analogue of the solutions found by Duyckaerts, Merle~\cite{DM1}, \cite{DM2} for the energy critical wave and Schr\"odinger equations  appear here as the unique 
one-dimensional stable/unstable manifolds approaching~$\pm Q$ exponentially as $t\to\I$ or $t\to-\I$, respectively. 
The main technical ingredient in our proof is a ``one-pass'' theorem which excludes the existence of (almost) homoclinic orbits between $Q$ (as well as $-Q$) and (almost) heteroclinic orbits connecting $Q$ with $-Q$. In a companion
paper~\cite{NakS} we establish analogous properties for the NLS equation. 
\end{abstract}

\maketitle

\tableofcontents

\section{Introduction}
In this paper we study the global behavior of general solutions to the nonlinear Klein-Gordon equation (NLKG) with the focusing cubic nonlinearity on $\R^3$, i.e., 
\EQ{\label{eq:NLKG}
 \ddot u - \De u + u = u^3, \pq u(t,x):\R^{1+3}\to\R,}
which conserves the energy
\EQ{\nn \label{def E}
 E(\vec u):=\int_{\R^3} \big[\frac{|\dot u|^2+|\na u|^2+|u|^2}{2}-\frac{|u|^4}{4}\big]\, dx.}
We regard $H^1(\R^3)\times L^2(\R^3)$ as the phase space for this infinite dimensional Hamiltonian system. In other words, we write the solutions as 
\EQ{\nn \label{def H}
 \vec u(t):=(u(t),\dot u(t)) \in \HH:= H^1_{\mathrm{rad}}\times L^2_{\mathrm{rad}}.} 
There exists a vast literature on the wellposedness theory for this equation in the energy space since J\"orgens \cite{Jorgens} and Segal \cite{Segal}, as well as the scattering theory for small data for the focusing nonlinearity as in \eqref{eq:NLKG} and for large data for the defocusing equation; see Brenner~\cite{Bren1}, \cite{Bren2}, Ginibre, Velo~\cite{GV1}, \cite{GV2}, Morawetz, Strauss \cite{MoSt}, and Pecher~\cite{Pech}. In this paper, the scattering of a solution $u$ to a static state $\fy$ refers to the following asymptotic behavior: there exists a solution $v$ of the free Klein-Gordon equation such that 
\EQ{
 \|\vec u(t)-\vec\fy-\vec v(t)\|_\HH \to 0,}
as either $t\to\I$ or $t\to-\I$ (depending on the context). See also Strauss~\cite{Strauss} and~\cite{IMN} for a review of Strichartz estimates and wellposedness, as well as scattering in this setting.  

It is well-known \cite{Strauss77}, \cite{BerLions}, \cite{Coff} that there exists a unique radial positive ground state $Q(x)$, solving the static equation 
\EQ{\nn \label{eq Q}
 -\De Q + Q = Q^3,}
with the least energy 
\EQ{\nn \label{def J}
 E(Q)=J(Q):=\int_{\R^3} \big[\frac{|\na Q|^2+|Q|^2}{2}-\frac{|Q|^4}{4}\big] \,dx>0,}
among the static solutions, and that the solutions $u$ below the ground energy 
\EQ{\nn 
 E(\vec u)<E(Q)}
are split into two classes by the functional 
\EQ{\label{eq:K0}
 K_0(u):=\int_{\R^3}[|\na u|^2+|u|^2-|u|^4] \, dx=\p_\la|_{\la=1}J(\la u),}
\begin{enumerate}
\item If $K_0(u(0))\ge 0$ then the solution $u$ exists globally on $t\in\R$.
\item If $K_0(u(0))<0$ then the solution $u$ blows up both in $\pm t>0$. 
\end{enumerate}
In this paper, blow-up of a solution $u$ means that it cannot be extended beyond a finite time $T^*$ in the energy space $\HH$. Then the 
wellposedness theory implies that $\|\vec u(t)\|_\HH\to\I$ as $t$ approaches $T^*$. 

The above result of dichotomy is essentially due to Payne, Sattinger \cite{PS}, which of course implies that the ground state $Q$ is unstable. 
For a general theory of orbital stability vs.~instability of solitary wave solutions for equations of this type, see Grillakis, Shatah, Strauss~\cite{GSS1}, \cite{GSS2}. 

Recently, Kenig, Merle \cite{KM1,KM2} considered the focusing nonlinear Schr\"odinger equation (NLS) and the nonlinear wave equations with the energy-critical nonlinearity in three dimensions (as well as others)
 and obtained a stronger version of the dichotomy, by adding scattering to zero in the global existence scenario. 
 It has been extended to several similar equations, including the cubic focusing NLS equation by Holmer, Roudenko~\cite{HR}, and NLKG such as \eqref{eq:NLKG} in~\cite{IMN}. 

On the other hand, the second author~\cite{S}, followed by Beceanu~\cite{Bec}, constructed center-stable manifolds with finite codimensions around the ground state in the case of the focusing cubic NLS equation in three dimensions. 
A construction of such manifolds for all $L^{2}$ supercritical NLS equations in one dimension was given by Krieger and the second author~\cite{KrS1}, and for 
the radial critical wave equation in~$\R^{3}$ this was done in~\cite{KrS2}. 
Stable, unstable and center manifolds have been known to play an important role in hyperbolic dynamics for a long time, see the classical work by Hirsch, Pugh, Shub~\cite{HPS} as well as
many other works since then, such as Bates, Jones~\cite{BJ1}, \cite{BJ2}, Vanderbauwhede~\cite{V}, and Li, Wiggins~\cite{LiW}. 
While the more ODE oriented approach in Bates, Jones to PDEs such as~\eqref{eq:NLKG}  does construct center-stable and unstable local manifolds near an
unstable equilibrium in the energy space, no statement can be made about the trajectories once they leave a small neighborhood of the equilibrium. In other words, they are local-in-time results which do not involve any dispersive analysis. In contrast, the emphasis in~\cite{S}, \cite{KrS1}, \cite{KrS2}, \cite{Bec} lies with the asymptotic behavior of the solutions as $t\to \I$
and scattering to a suitable ``soliton'' is established in each of these works for data lying on a center-stable manifold modulo the group actions which preserve the equation. 
In~\cite{Bec} both points of view are united by establishing the existence of the manifold $\M$ from~\cite{S} in the energy class (in fact, in the critical $\dot H^{\frac12}$ topology) for the cubic NLS equation, with the added feature of global-in-time invariance of~$\M$ as $t\to\infty$, as well as scattering  in the energy class to~$Q$ modulo the symmetries for solutions starting on~$\M$. 

Note that those solutions scattering to $Q$ must have energy above the ground state, and so the solution sets of Payne, Sattinger on the one hand, and those on the center-stable manifolds on the other hand,  are necessarily disjoint from each other. 
More recently, Duyckaerts, Merle \cite{DM1,DM2} investigated the solutions on the threshold energy $E(\vec u)=E(Q)$ for energy critical equations, and proved that there are exactly two new solutions modulo symmetry; scattering to $Q$ as $t\to-\I$, while either scattering to $0$ or blowing up as $t\to\I$. 
They can be regarded as minimal solutions on the manifold~$\M$. 
This result is also extended to the cubic NLS by Duyckaerts, Roudenko \cite{DR}.  In this paper we exhibit these solutions as the unique stable/unstable manifolds associated with $\pm Q$, see~\cite{BJ2} for the definition of these objects. 

However, all of these works describe only part of the dynamics for energies near that of the ground state. 
A natural question to ask is whether the solutions near the soliton are separated by $\M$ into a region of scattering to zero, and the other region of blow-up. 
This is partially motivated by the study of ``critical phenomena'' in the physics literature, see for example 
Choptuik~\cite{Chop}, Choptuik, Chmaj, Bizon~\cite{ChopBiz},  and Bizon, Chmaj, Tabor~\cite{Biz}. 
For the $|u|^{5}$ wave equation in~$\R^{3}$,  blow-up for all data on one side of the tangent space to the center-stable manifold constructed in~\cite{KrS2}
was shown by 
Karageorgis, Strauss~\cite{KaS}. 

Our goal in this paper is to give a complete picture of the dynamics of~\eqref{eq:NLKG} for radial energy data with energy slightly larger than that of~$Q$: 
\EQ{ \label{def He}
 \HH^\e:=\{\vec u\in \HH \mid E(\vec u)<E(Q)+\e^2\}.}
Note that the only symmetry in $\HH$ is $u\mapsto-u$ in this setting.

\begin{thm}\label{thm:main}
Consider all solutions of NLKG~\eqref{eq:NLKG} with initial data $\vec u(0)\in\HH^\e$ for some small $\e>0$. We prove that the solution set is decomposed into nine non-empty sets characterized as 
\begin{enumerate}
\item Scattering to $0$ for both  $t\to\pm\I$, 
\item Finite time  blow-up on both sides $\pm t>0$, 
\item Scattering to $0$ as $t\to\I$ and finite time  blow-up in $t<0$, 
\item Finite time  blow-up in $t>0$ and scattering to $0$ as $t\to-\I$, 
\item Trapped by $\pm Q$ for $t\to\I$ and scattering to $0$ as $t\to-\I$, 
\item Scattering to $0$ as $t\to\I$ and trapped by $\pm Q$ as $t\to-\I$, 
\item Trapped by $\pm Q$ for $t\to\I$ and finite time  blow-up in $t<0$, 
\item Finite time  blow-up in $t>0$ and trapped by $\pm Q$ as $t\to-\I$, 
\item Trapped by $\pm Q$  as $t\to\pm\I$, 
\end{enumerate}
where ``trapped by $\pm Q$" means that the solution stays in a $O(\e)$ neighborhood of $\pm Q$ forever after some time (or before some time). The initial data sets for (1)-(4), respectively, are open. 
\end{thm}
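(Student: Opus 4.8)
The plan is to combine a detailed analysis of the NLKG flow in a small neighborhood of $\{\pm Q\}$ with a ``no return'' mechanism that decouples this local picture from the global one, and then to graft on the sub-threshold blow-up theory of Payne--Sattinger \cite{PS} and the scattering theory of \cite{IMN} for the parts of a trajectory living far from $\pm Q$. I would begin with two preliminary ingredients. The first is variational: using the conserved energy and a scaling functional of the type $K_0$ from \eqref{eq:K0}, one shows that for $\vec u\in\HH^\e$ with $\dist(\vec u,\{\pm Q\})\geq\de$ (where $\e\ll\de\ll1$) the sign of a suitable variant of $K(u)$ is well defined and $|K(u)|\gec\de$ --- essentially because $Q$ is a nondegenerate (modulo $u\mapsto-u$) mountain-pass point and the second variation has a fixed sign on the relevant cone --- while the two regions $\{K>0\}$ and $\{K<0\}$ are connected in this regime. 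The second is hyperbolic: linearizing \eqref{eq:NLKG} at $Q$ gives the operator $L_+=-\De+1-3Q^2$, which on radial functions has exactly one negative eigenvalue $-k^2$ and trivial kernel, so the linearized Klein--Gordon generator has a single unstable mode, a single stable mode, and otherwise oscillatory spectrum. Introducing hyperbolic coordinates $(\la_1,\text{rest})$ with $\la_1$ the unstable amplitude, I would prove an \emph{ejection lemma}: a solution that enters a $\de$-neighborhood $\B_\de$ of $\{\pm Q\}$ but does not remain there must leave it, on the exit $\dist(\vec u,\{\pm Q\})\sim|\la_1(t)|\sim e^{kt}$, and hence at the exit time the sign of $K(u)$ is forced by $\sign\la_1$; by time reversal the analogous statement holds as $t$ decreases.

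The heart of the matter, and the step I expect to be hardest, is the \emph{one-pass theorem}: once a solution has left $\B_\de$ with a definite sign of $K$ it never re-enters $\B_\de$, so the set of times with $\vec u(t)\in\B_\de$ is a single interval (possibly empty, semi-infinite, or all of $\R$). The mechanism is a localized virial identity for a functional of the form $\lan\dot u\,|\,A_R u\ran$, where $A_R$ is a cutoff at radius $R$ of the scaling generator $x\cdot\na+\tfrac{3}{2}$, with $R$ tuned to the length of the hypothetical excursion away from $\pm Q$ and back: its time derivative equals $-K(u)$ up to errors, and on the excursion $|K|$ is bounded below while the nonlinear, kinetic and boundary errors are controlled by the exponential-in-time estimates of the ejection lemma near $\pm Q$, by time-integrability of the remainders away from $\pm Q$, and by the fact that the energy exceeds $E(Q)$ by only $\e^2$. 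Integrating this monotonicity over a long excursion then contradicts the a priori bound $|\lan\dot u\,|\,A_R u\ran|\lec R$. The delicate issues will be the choice of $R=R(\e,\de)$, the uniform treatment of all error terms on excursions of unbounded length (including the semi-infinite case needed for the asymptotics), and short excursions, where instead one argues directly from the ejection lemma.

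Granting the one-pass theorem, the classification falls out. Every solution in $\HH^\e$ spends one connected time interval in $\B_\de$, and on each complementary half-line $K$ has a fixed sign. On a forward half-line with $K<0$ the Payne--Sattinger convexity estimate for $\int|u|^2\,dx$, perturbed by $O(\e^2)$, yields finite-time blow-up; on a forward half-line with $K\geq0$ the concentration--compactness/rigidity method of \cite{IMN} applies above the ground state, the only new point being that a nonzero critical (almost periodic) element would be a bounded non-scattering orbit returning near $\pm Q$, which the one-pass theorem forbids; and if the solution stays in $\B_\de$ for all $t\geq T$ it is ``trapped by $\pm Q$''. The backward direction is symmetric. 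The trapped-forward solutions near $\pm Q$ are organized by a codimension-one Lipschitz center-stable manifold $\M$, built by a Lyapunov--Perron fixed point from the spectral splitting above together with Strichartz estimates, as in \cite{S}, \cite{KrS1}, \cite{KrS2}, \cite{Bec}; inside $\M$ lies a one-dimensional stable manifold $W^s(\pm Q)$ of solutions converging to $\pm Q$ exponentially, with $W^u(\pm Q)$ its time reverse --- these are the NLKG analogues of the Duyckaerts--Merle solutions \cite{DM1}, \cite{DM2}.

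It remains to establish non-emptiness of the nine sets and openness of (1)--(4). Set (1) contains small data; set (2) contains data with $K_0<0$ and energy below $E(Q)$ (Payne--Sattinger); sets (3) and (4) are produced by solutions near $Q$ whose two hyperbolic coordinates are both nonzero and yield opposite exit signs in the two time directions, so that $K\geq0$ on one half-line and $K<0$ on the other with no trapping; and sets (5)--(9) come from $\M$, $W^s(\pm Q)$ and $W^u(\pm Q)$ together with the backward trichotomy --- for instance a point of $W^u(Q)$ exiting forward on the $K<0$ (resp. $K\geq0$) side is of type (8) (resp. (6)), a point of $\M$ whose backward exit lands in the three regimes gives types (5), (7), (9), and $Q$ itself is of type (9). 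That every exit sign is actually attained, so that no set is empty, I would get from a connectedness argument: along a continuous path of data in $\HH^\e$ joining a solution that blows up in a prescribed time direction to one that scatters to $0$ in that direction, the behavior must change, which by the trichotomy forces an intermediate datum trapped by $\pm Q$. Finally, sets (1)--(4) are open: ``scattering to $0$ as $t\to\pm\I$'' is open by the perturbation theory of the scattering norm, and, by Step 3 and the convexity argument, ``blow-up for $\pm t>0$'' is equivalent to the trajectory leaving $\B_\de$ on the $K<0$ side in finite time (or never approaching $\pm Q$ while $K<0$), a condition stable under $\HH$-perturbations of the data; so any datum exhibiting one of these two behaviors in each time direction has an entire $\HH$-neighborhood doing the same.
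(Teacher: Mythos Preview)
Your proposal follows essentially the same architecture as the paper: variational lower bounds on $|K_s|$ away from $\pm Q$ (the paper's Lemma~\ref{K lower bd}), the ejection lemma driven by the hyperbolic mode (Lemma~\ref{2nd exit}), the one-pass theorem via a localized virial (Section~\ref{sec:no homo}, with a light-cone-adapted cutoff rather than a fixed-radius one), Payne--Sattinger for blow-up and Kenig--Merle for scattering, and openness/connectedness arguments for the classification.

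Two points where your outline diverges from, or is less sharp than, the paper. First, your non-emptiness argument for sets (5)--(8) invokes the center-stable manifold $\M$ and the one-dimensional stable/unstable manifolds $W^{s/u}(\pm Q)$, whose construction (Proposition~\ref{prop:mfd}) needs the spectral gap~\eqref{eq:gap} via Strichartz for $e^{it\om}$. The paper deliberately avoids this for Theorem~\ref{thm:main}: it builds explicit solutions in $\B_+^\e\cap\S_-^\e$, $\B_+^\e\cap\B_-^\e$, $\S_+^\e\cap\S_-^\e$ by prescribing $(\la(0),\dot\la(0),\vec\ga(0))$ and using only the ejection lemma, then obtains (5)--(8) by joining these with short curves inside the region $\{d_Q\gg\e,\ \Sg\text{ fixed}\}$; any such curve from $\S_-^\e$ to $\B_-^\e$ must cross $\T_-^\e$, and the forward sign at the crossing is already forced by $\Sg$. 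Set (9) beyond $\pm Q$ comes from restricting to the hyperplane $\dot u=0$ and using time-reversal symmetry. This buys generality: the paper's proof of Theorem~\ref{thm:main} extends to all $H^1$-subcritical, $L^2$-supercritical powers, whereas yours is tied to the gap. For the cubic equation as stated your route is nonetheless valid.

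Second, your sketch of the scattering step is slightly off: the critical element does not ``return near $\pm Q$''. The one-pass theorem enters through the profile decomposition --- after first time-translating each $u_n$ so that $d_Q(\vec u_n(0))>\tfrac23\de_X$ and $K_2(u_n(0))\gg\e_*^2$ --- to show that any profile approaching $\pm Q$ would drag $u_n$ back into the forbidden neighborhood, so the single non-scattering profile must itself live in $\{d_Q\ge R_*,\ \Sg=+1\}$. The contradiction then comes from the standard precompactness-plus-virial rigidity on that critical element, not from the one-pass theorem applied to it directly.
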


The striking difference from the Kenig-Merle or Duyckaerts-Merle type results is the existence of solutions which blow up for $t<0$ and scatter for $t\to+\I$, or vice versa. It also implies that the initial data set for the forward scattering $(1)\cup(3)\cup(6)$ (or backward scattering) is unbounded in~$\HH$; in fact, it contains a curve connecting zero to infinity in~$\HH$. 
The number ``nine" simply means that all possible combinations of scattering to zero/scattering to $\pm Q$/finite time blow-up are allowed as $t\to\pm\I$. 
Each of these are in fact realized by infinitely many solutions. 

The simple dichotomy in terms of $K_0$ is no longer available, so it is not easy to predict the global dynamics for a given initial data, as in the case of Payne-Sattinger or Kenig-Merle below the ground state. However, we can give some description, in a more dynamical way, combining the hyperbolic structure around the ground state with the global variational structure. Let $L_+$ denote the linearized operator around $Q$, and let $\r$ be its ground state:
\EQ{ \label{def L+}
 L_+ = -\De + 1 - 3Q^2, \pq L_+\r = -k^2\r,\pq k>0,\ \r(x)>0,\ \|\r\|_{L^2}=1.}
It is well known that $L_+$ has only one negative eigenvalue. 
We can define a nonlinear distance function $d_Q(\vec u):\HH^\e\to[0,\I)$ continuous such that 
\EQ{
 \pt d_Q(\vec u) \simeq \inf_\pm\|\vec u\mp\vec Q\|_\HH,
 \pr d_Q(\vec u)\ll 1 \implies d_Q(\vec u)=E(\vec u)-E(Q)+k^2|\LR{u\mp Q|\r}|^2,}
where $\pm Q$ is chosen to be the closest to $u$. Let  
\EQ{ \label{def BQ}
 B(\pm Q):=\{\vec u\in \HH^\e \mid d_Q(\vec u)\le 2[E(\vec u)-E(Q)]\}}
be a pair of small balls around $\pm Q$. Outside of these balls, we can define a continuous sign function $\Sg:\HH^\e\setminus B(\pm Q)\to\{\pm 1\}$ such that 
\EQ{
 \pt d_Q(\vec u)\le\de_X \implies \Sg(\vec u)=-\sign\LR{u\mp Q|\r}, 
 \pr d_Q(\vec u)\ge \de_S \implies \Sg(\vec u)=\sign K_0(u),}
for some $\de_X>\de_S>2\e$, with the convention that $\sign 0=+1$. We emphasize that $\Sg$ is different\footnote{$\Sg$ might be the same as $\sign K_0$ in $\HH^\e\setminus B(\pm Q)$, but it depends on the coefficient $2$ in the definition of $B(\pm Q)$. In fact we can make it closer to $1$ as $d_Q(\vec u)\to 0$, and in that region we can show that $-\sign\LR{u\mp Q|\r}$ and $\sign K_0(u)$ are indeed opposite at some points.} 
 from $\sign K_0$ in the region close to $\pm Q$. 

\begin{thm} \label{Lachesis}
There are small $\e>0$ and $R>2\e$ with the following property. If $u$ is a solution of NLKG~\eqref{eq:NLKG} in $\HH^\e$, defined on an interval $I$, satisfying 
\EQ{ \label{exiting}
  d_Q(\vec u(\t_1))<d_Q(\vec u(\t_2))< R, \pq \vec u(\t_2)\not\in B(\pm Q),}
for some $\t_1<\t_2\in I$, then we have $d_Q(\vec u(t))>d_Q(\vec u(\t_2))$ for all $t>\t_2$ in $I$. In particular, $u$ remains outside of $B(\pm Q)$ with a fixed sign $\Sg(\vec u(t))\in\{\pm 1\}$ for $t>\t_2$. If $\Sg(\vec u(t))=+1$, then $u$ scatters to $0$ as $t\to\I$. If $\Sg(\vec u(t))=-1$, then $u$ blows up in
finite time after $\t_2$. 
Conversely, if a solution $u$ in $\HH^\e$ scatters as $t\to\I$, then $d_Q(\vec u(t))>R$, $\vec u(t)\not\in B(\pm Q)$ and $\Sg(\vec u(t))=+1$ for large $t$.
 If it blows up as $t\to T-0$, then $d_Q(\vec u(t))>R$, $\vec u(t)\not\in B(\pm Q)$ and $\Sg(\vec u(t))=-1$ for $t<T$ close to~$T$.
 If it is trapped by $\pm Q$ as $t\to\I$, then it stays in $B(\pm Q)$ for large $t$. 
\end{thm}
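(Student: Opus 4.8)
The plan is to combine three structural facts about \eqref{eq:NLKG} near the energy level $E(Q)$ and then argue by a dichotomy. \textbf{(i)~Hyperbolic dynamics near $\pm Q$.} Since $L_+$ has the single negative mode $\rho$, I will use an \emph{ejection lemma}: writing $\vec u=\pm\vec Q+\vec v$, as long as $d_Q(\vec u(t))\le\delta_X$ the unstable coordinate $\lambda(t):=\LR{u(t)\mp Q|\rho}$ and the ``hyperbolic part'' of $d_Q(\vec u(t))$ grow like $e^{k(t-t_0)}$ once $d_Q(\vec u(\cdot))$ begins to increase past a small threshold, and the sign stays frozen, $\Sg(\vec u(t))=-\sign\lambda(t)$, until $d_Q(\vec u(t))$ leaves $\{d_Q\le\delta_X\}$. \textbf{(ii)~Variational coercivity.} After shrinking $\e$ so that $2\e<\delta_S<\delta_X$, the constraint $E(\vec u)<E(Q)+\e^2$ keeps $K_0(u)$ bounded away from $0$ outside an $O(\e)$-neighborhood of $\pm Q$ that is contained in $B(\pm Q)$; thus on $\{d_Q\ge\delta_S\}\setminus B(\pm Q)$ one has $\Sg(\vec u)=\sign K_0(u)$ and $|K_0(u)|\ge\nu$ for a fixed $\nu=\nu(\e)>0$. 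Moreover $K_0(u)\ge0$ forces $\|\vec u\|_\HH^2\le4E(\vec u)$, so a solution with $K_0(u(t))\ge0$ for all large $t$ is bounded in $\HH$ and global. \textbf{(iii)~Dispersive machinery.} I will use a localized virial (Morawetz) functional $\mathcal V(t)$, bounded uniformly by $C(\e)\|\vec u(t)\|_\HH^2$, with $\dot{\mathcal V}(t)=-K_0(u(t))$ up to lower-order kinetic terms and errors supported where the solution is small, so that $\dot{\mathcal V}(t)$ carries the fixed sign $-\Sg(\vec u(t))$ with $|\dot{\mathcal V}(t)|\gtrsim\nu$ on $\{d_Q\ge\delta_S\}\setminus B(\pm Q)$; together with the Kenig--Merle concentration--compactness scheme as in \cite{IMN} and the Payne--Sattinger convexity argument \cite{PS}. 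We set $R=\delta_X$, so that $R>2\e$.

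The heart of the matter, and what I expect to be the main obstacle, is the \emph{no-return} property: if \eqref{exiting} holds, then $d_Q(\vec u(t))>d_Q(\vec u(\tau_2))$ for all $t>\tau_2$ in $I$. I would argue by contradiction. If it failed, then $d_Q(\vec u(\cdot))$, starting below $R=\delta_X$ and being ejected, would have to climb out of and then fall back into $\{d_Q<\delta_X\}$, so that over some subinterval the trajectory is an almost-homoclinic loop at $\pm Q$ (or an almost-heteroclinic arc from $+Q$ to $-Q$). On any such subinterval the solution spends at least a fixed positive amount of time in $\{d_Q\ge\delta_S\}$, where $\dot{\mathcal V}$ keeps one sign with $|\dot{\mathcal V}|\gtrsim\nu$; near the two passages by $\pm Q$ one corrects $\mathcal V$ by a bounded multiple of $\lambda^2$ (using the ejection lemma) so that the corrected functional stays monotone there, and hence its net change over the subinterval is bounded below by a fixed positive amount. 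Summing over all such subintervals --- the trajectory may re-enter $\{d_Q<\delta_X\}$ repeatedly --- the corrected functional would leave every compact set, contradicting its uniform bound. Making this rigorous is where the real work lies: (a)~one must estimate the errors in $\dot{\mathcal V}$ precisely, which one does by fixing the cutoff radius $R_0=R_0(\e)$ large and using finite speed of propagation and the radial Sobolev inequality to push the exterior contributions below $\nu$; (b)~one must glue the virial monotonicity valid off $\pm Q$ to the hyperbolic monotonicity valid near $\pm Q$ across the transition layer $\{\delta_S\le d_Q\le\delta_X\}$; and (c)~one must carry the sign bookkeeping uniformly so that heteroclinic arcs between $+Q$ and $-Q$ are excluded on the same footing as homoclinic loops at $\pm Q$.

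Granting no-return, the forward assertions follow. For $t>\tau_2$, conservation of energy gives $d_Q(\vec u(t))>d_Q(\vec u(\tau_2))>2[E(\vec u(\tau_2))-E(Q)]=2[E(\vec u(t))-E(Q)]$, so $\vec u(t)\notin B(\pm Q)$; since $t\mapsto\vec u(t)$ is a continuous path in $\HH^\e\setminus B(\pm Q)$ on which $\Sg$ is continuous and $\{\pm1\}$-valued, $\Sg(\vec u(t))\equiv s$ is constant, and the ejection carries $d_Q(\vec u(t))$ above $\delta_S$ after a finite time, where the coercivity of (ii) applies. If $s=+1$, then $K_0(u(t))\ge\nu$ for large $t$, so $u$ is bounded in $\HH$ and global; the almost-periodic critical element produced by the Kenig--Merle scheme would be a nonzero radial solution with $K_0\ge\nu$ along a precompact orbit, and this is ruled out by $\dot{\mathcal V}\gtrsim\nu$ against $\sup_t|\mathcal V(t)|<\I$, so $u$ scatters to $0$ as $t\to\I$. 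If $s=-1$, then $K_0(u(t))\le-\nu$ for large $t$, and the Payne--Sattinger convexity argument, available because $\vec u(t)$ stays in $\{\Sg=-1\}\setminus B(\pm Q)$ where $K_0\le-\nu$ and the accompanying coercivity bounds hold, forces blow-up in finite time after $\tau_2$.

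It remains to prove the converses, which do not need the no-return property. If $u$ scatters as $t\to\I$, then $\|u(t)\|_{L^4}\to0$ while $\|\vec u(t)\|_\HH$ stays bounded, so for large $t$, $\|u(t)\mp Q\|_{L^4}\ge\frac12\|Q\|_{L^4}$, whence $d_Q(\vec u(t))\simeq\inf_\pm\|\vec u(t)\mp\vec Q\|_\HH\simeq1>R$ and $\vec u(t)\notin B(\pm Q)$, while $\|u(t)\|_{L^4}^4\le C\|u(t)\|_{L^4}^2\|u(t)\|_{H^1}^2<\|u(t)\|_{H^1}^2$ gives $K_0(u(t))>0$, so $\Sg(\vec u(t))=+1$. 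If $u$ blows up as $t\to T^*-0$, then $\|\vec u(t)\|_\HH\to\I$, so $d_Q(\vec u(t))\to\I>R$ and $\vec u(t)\notin B(\pm Q)$ near $T^*$; and once $\|\vec u(t)\|_\HH^2>4E(\vec u)$ one must have $K_0(u(t))<0$, so $\Sg(\vec u(t))=\sign K_0(u(t))=-1$ near $T^*$. Finally, if $u$ is trapped by $\pm Q$ as $t\to\I$, then $\vec u(t)$ stays in an $O(\e)$-neighborhood of $\pm Q$; by the ejection mechanism its unstable coordinate must tend to $0$ (otherwise the solution would be ejected), so the solution enters the center-stable regime near $\pm Q$, where the defining inequality of $B(\pm Q)$ --- the unstable-mode contribution being dominated by the excess energy --- holds for large $t$, i.e.\ $\vec u(t)\in B(\pm Q)$. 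The analogous statements as $t\to-\I$ follow by time reversal.
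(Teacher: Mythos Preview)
Your architecture matches the paper's (ejection lemma near $\pm Q$, variational bounds away from $\pm Q$, a localized virial for the no-return contradiction, then Payne--Sattinger for $\Sg=-1$ and Kenig--Merle for $\Sg=+1$), but two essential ingredients are missing from the no-return step and without them it does not close. First, your claim that $|K_0(u)|\ge\nu$ on $\{d_Q\ge\delta_S\}\setminus B(\pm Q)$ is false: the variational lemma only gives $K_s(u)\ge\min(\kappa_1,\kappa_0\|u\|_{H^1}^2)$ for $s=0$ (resp.\ $\kappa_0\|\nabla u\|_{L^2}^2$ for $s=2$), which degenerates as $u\to 0$. The paper patches this with a separate dispersive lemma (its Lemma~\ref{0freq scat}): if $\int_{t-1}^{t+1}\|\nabla u\|_{L^2}^2\,ds\le\mu^2$ while $\|\vec u\|_{L^\infty\HH}\le M_*$, then $u$ scatters globally with $\|u\|_{L^3_tL^6_x(\R)}\ll\mu^{1/6}$. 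In the contradicting scenario the solution revisits an $R$-neighborhood of $\pm Q$, where $\|u\|_{L^6_x}\sim\|Q\|_{L^6_x}$, so this is impossible; one therefore gets only a \emph{time-averaged} lower bound $\int_{t-1}^{t+1}\sigma K_2\gg R_*^2$ on the non-hyperbolic part of the interval, not a pointwise one. You have no mechanism for this degeneration, and ``errors supported where the solution is small'' does not address it since here the leading term $K_s$ itself is small.

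Second, and more fundamentally, your bound $|\mathcal V(t)|\le C(\e)\|\vec u(t)\|_\HH^2$ is too weak to rule out a \emph{single} return, which is what you must exclude; summing over loops would only rule out infinitely many. The paper picks $T_1<T_2$ with $d_Q(\vec u(T_j))=R$ and $d_Q\ge R$ on $[T_1,T_2]$, uses a \emph{time-dependent} cutoff tracking light cones inward from $T_1,T_2$ with base radius $S$ chosen so that $|\log R|\ll S\ll 1/R$, and then --- writing $u=\pm Q+v$ at the endpoints and using $|x\nabla Q|\lesssim e^{-|x|}$ --- bounds $|V_w(T_j)|\lesssim\|\dot v\|_{L^2}+S\|v\|_E^2\lesssim R+SR^2\lesssim R$. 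On the other hand, each hyperbolic interval $I_m$ around a local minimum of $d_Q$ contributes $\int_{I_m}\sigma K_2\gtrsim\delta_X$ to $[\sigma V_w]_{T_1}^{T_2}$, by integrating the ejection bound $\sigma K_s\gtrsim d_Q-C_*d_Q(t_m)$ over exponential growth up to $\delta_X$. Since $R\ll\delta_X$, one pass already yields the contradiction. Your fixed cutoff $R_0(\e)$ gives only $|\mathcal V|\lesssim R_0(\e)J(Q)$, and there is no reason for this to be $\ll\delta_X$. (Two smaller points: the virial multiplier $w(x\nabla+\nabla x)u$ produces $-K_2$, not $-K_0$ --- the paper uses $K_2$ in the virial and reserves $K_0$ for the Payne--Sattinger step; and one needs $R\ll\delta_X$, so your choice $R=\delta_X$ cannot work.) Your forward consequences after no-return and your converses for scattering and blow-up are essentially correct.
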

In other words, every solution can enter and exit $B(\pm Q)$ at most once, and the sign of $\Sg(\vec u)$ is constant while it is away from $B(\pm Q)$. 
Hence, if a solution does enter $B(\pm Q)$ and exits, then its fate is determined by the sign of $\Sg$ at the entrance and the exit, respectively, for $t\to-\I$ and $t\to+\I$, which are not necessarily the same (cf.~cases (3) and (4) of Theorem~\ref{thm:main}). It also implies that the set (9) in the previous theorem is bounded in the energy space $\HH$. 

In fact, we can give a more precise description of the exiting dynamics. Indeed, in the setting \eqref{exiting}, $d_Q(\vec u(t))$ is monotonically and exponentially growing for $t\ge\t_2$ until it reaches a larger number $\de_X$ (see Lemma \ref{2nd exit}). 

One can also give a more detailed description of the dynamics of those solutions which are trapped by $\pm Q$, by means of the following {\em spectral gap property} of the linearized operator $L_{+}=-\Delta+1-3Q^{2}$:
\begin{equation}\label{eq:gap}
 \text{$L_{+}$ has no eigenvalues in $(0,1]$ and no resonance}\footnote{In our radial case, it is easy to preclude by means of analytical arguments eigenvalues
  at the threshold $1$, but we include it here, because such an eigenvalue might exist in the nonradial case, which is indeed treated by \cite{DS}.} \text{ at the threshold $1$}
\end{equation}
It was verified by means of the Birman-Schwinger theorem and a numerical computation of the five largest eigenvalues of a suitably discretized Birman-Schwinger operator in~\cite{DS} by Demanet and the second author. 
Using this gap property one obtains the  following refinement of Theorem~\ref{thm:main}. 
 
\begin{thm}\label{thm:main*}
In the statement of Theorem~\ref{thm:main} one can replace ``trapped by $\pm Q$" with ``scattering to $\pm Q$". 
The sets $(5)\cup(7)\cup(9)$ and $(6)\cup(8)\cup(9)$ are smooth codimension one manifolds in the (radial) phase space $\HH$, and they are the\footnote{Since center manifolds are in general not unique it might be more precise to say ``a center-stable manifold'' here, but we ignore this issue. In fact, our manifolds are naturally unique for the global characterization in Theorem \ref{thm:main}.} center-stable manifold, resp.~the center-unstable manifold, around $\pm Q$. 
Similarly, (9) is a smooth manifold of codimension $2$, namely the center manifold. 
\end{thm}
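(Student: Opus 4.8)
The plan is to build Theorem~\ref{thm:main*} on top of Theorem~\ref{Lachesis} (the one-pass theorem) by upgrading the local-in-time center-stable/center-unstable/center manifold theory near $\pm Q$ to a global statement, and by improving ``trapped'' to ``scattering to $\pm Q$'' using the spectral gap \eqref{eq:gap}. First I would set up the standard hyperbolic decomposition near $Q$: write $\vec u = \pm\vec Q + \vec v$, diagonalize the linearized flow, and split the unstable eigenmode (the $+k$ direction associated with $\r$), the stable eigenmode (the $-k$ direction), and the infinite-dimensional ``center'' part on which, thanks to the gap property, the linearized evolution is dispersive (no eigenvalues in $(0,1]$, no threshold resonance, so the center part disperses like the free Klein-Gordon flow with the soliton projection removed). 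Using the dispersive estimates for the center part together with a Lyapunov-Perron fixed point argument in a suitable Strichartz-type space, I would construct a local center-stable manifold $\M_{cs}(\pm Q)$, a local center-unstable manifold $\M_{cu}(\pm Q)$, and a local center manifold $\M_c(\pm Q)$, each smooth, of codimension $1,1,2$ respectively, characterized by the vanishing of the unstable coordinate (for $\M_{cs}$), of the stable coordinate (for $\M_{cu}$), or of both (for $\M_c$), with the property that solutions on $\M_{cs}$ that stay in a small neighborhood of $\pm Q$ for $t\to+\infty$ scatter to $\pm Q$, and symmetrically for $\M_{cu}$ as $t\to-\infty$.

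Next I would use the one-pass theorem to globalize. The key point is that Theorem~\ref{Lachesis} says a solution enters and exits $B(\pm Q)$ at most once, and once it exits with sign $+1$ it scatters to $0$ and with sign $-1$ it blows up. Consequently the \emph{global} set of data whose forward trajectory is trapped by $+Q$ is exactly the set of data that flow into $B(+Q)$ (in finite or infinite time) and never leave — which by the local theory means they eventually land on the local $\M_{cs}(+Q)$. Pulling this local manifold back along the (globally defined, smooth on the open regions) NLKG flow produces a global smooth codimension-one manifold; because the flow map is a diffeomorphism on its domain, smoothness and the codimension are preserved. I would then identify $(5)\cup(7)\cup(9)$ as precisely the forward-trapped set, hence as this global center-stable manifold, and symmetrically $(6)\cup(8)\cup(9)$ as the center-unstable manifold, and $(9)$, being the intersection, as a smooth codimension-two center manifold. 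The replacement of ``trapped'' by ``scattering to $\pm Q$'' throughout the statement of Theorem~\ref{thm:main} is then immediate: under \eqref{eq:gap} every forward-trapped solution lies on $\M_{cs}$, and on $\M_{cs}$ one has genuine scattering to the soliton (the center part behaves like a free wave added to $\pm Q$).

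For the detailed ordering: (i) recall the spectral picture of $L_+$ and the consequence of \eqref{eq:gap} — a Strichartz/dispersive estimate for $e^{it\sqrt{L_+}}P_c$ where $P_c$ is the projection onto the continuous spectral subspace, after removing the one-dimensional generalized kernel coming from the translation symmetry is not present here since we are radial, so only the $\pm k$ eigenmodes and $P_c$ appear; (ii) perform the modulation-free decomposition (no modulation parameters are needed because the radial setting kills translations and the only symmetry is $u\mapsto -u$), isolating stable/unstable scalar ODEs $\dot a_\pm = \pm k a_\pm + (\text{nonlinear})$ and the dispersive center equation; (iii) run the Lyapunov-Perron argument to get the local invariant manifolds and the scattering-to-$\pm Q$ statement on them, as in the references \cite{S},\cite{Bec},\cite{KrS2} but adapted to NLKG; (iv) invoke Theorem~\ref{Lachesis} to show that ``trapped forward'' $\Leftrightarrow$ ``eventually on local $\M_{cs}$'', hence globalize the manifold by flowing it backward; (v) assemble the codimension count for $(9)=(5)\cup(7)\cup(9)\cap(6)\cup(8)\cup(9)$ and note transversality of the two manifolds (their tangent spaces at $\pm Q$ are the stable-plus-center and unstable-plus-center subspaces, which intersect transversally in the center subspace).

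The main obstacle I expect is step (iii): constructing the local center-stable manifold with genuine scattering, rather than merely boundedness, of the center component. This requires the full dispersive machinery — Strichartz estimates for the perturbed Klein-Gordon operator $\partial_t^2 + L_+$, which is exactly where the spectral gap hypothesis \eqref{eq:gap} is indispensable (an eigenvalue in $(0,1]$ or a threshold resonance would destroy the needed decay and one could only get boundedness, not scattering) — coupled with a contraction argument on the unstable mode run backward in time and the stable/center modes run forward, all in a norm strong enough to close the cubic nonlinearity in $\R^3$. A secondary technical point is checking that the global manifold obtained by backward flow is genuinely smooth and embedded (no self-intersections), which follows from uniqueness in the one-pass theorem together with uniqueness of the local manifold, but must be stated carefully; and one must verify that the manifolds so constructed coincide with the Bates-Jones local center-stable/unstable manifolds \cite{BJ1},\cite{BJ2}, so that the identification with the Duyckaerts-Merle threshold solutions \cite{DM1},\cite{DM2} as the one-dimensional stable/unstable manifolds is legitimate.
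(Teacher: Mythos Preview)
Your proposal is correct and follows essentially the same route as the paper: construct the local center-stable manifold with scattering to $\pm Q$ via a Lyapunov--Perron fixed point in Strichartz norms (this is exactly Proposition~\ref{prop:mfd}, where the spectral gap \eqref{eq:gap} enters through Yajima's $W^{k,p}$-boundedness of wave operators to transfer the free Klein--Gordon Strichartz estimate to $e^{it\omega}P^+$), then use the one-pass theorem to identify the global trapped set $\T_\pm^\e$ with the backward flow-out of this local manifold, and finally invoke smoothness of the flow to conclude $\T_\pm^\e$ are smooth codimension-one manifolds with $(9)=\T_+^\e\cap\T_-^\e$ a transversal intersection of codimension two. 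The paper's treatment in Section~\ref{s:mfd} and the second half of Section~6 matches your steps (i)--(v) almost verbatim, including the tangent space \eqref{eq:TQM} which makes the transversality you mention in (v) immediate.
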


It seems natural to the authors to separate the above theorem from the previous ones, as it is know from~\cite{DS} that~\eqref{eq:gap} fails if one lowers the power $3$ on the nonlinearity slightly, say to power $<2.8$. 
On the other hand, our argument for Theorems \ref{thm:main} and \ref{Lachesis} is quite general, in particular it does not use \eqref{eq:gap}, or more precisely, any dispersive property of the linearized operator. 
In fact, it is straightforward to extend them to all $L^2$ super-critical and $H^1$ subcritical powers and all space dimensions, i.e.
\EQ{
 \ddot u - \De u + u = u^p, \pq 1+4/d<p<1+4/(d-2), \pq u:\R^{1+d}\to\R.} 
 
Finally, from the above theorems, we can easily deduce a Duyckaerts-Merle-type result on the energy threshold.

\begin{thm} \label{thm:threshold}
Consider the limiting case $\e\to 0$ in Theorem \ref{thm:main}, i.e., all the radial solutions satisfying $E(\vec u) \le E(Q)$. 
Then the sets $(3)$ and $(4)$ vanish, while the sets $(5)-(9)$ are characterized, with some special solutions $W_\pm$, as follows
\EQ{
 \pt (5)=\{\pm W_-(t-t_0) \mid t_0\in\R\},
 \pq (6)=\{\pm W_-(-t-t_0) \mid t_0\in\R\},
 \pr (7)=\{\pm W_+(t-t_0) \mid t_0\in\R\},
 \pq (8)=\{\pm W_+(-t-t_0) \mid t_0\in\R\},
 \pr (9)=\{\pm Q(t-t_0) \mid t_0\in\R\}.}
 The solutions $W_{\pm}(t)$ converge exponentially to $Q$ as $t\to\I$. 
\end{thm}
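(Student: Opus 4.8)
The plan is to deduce Theorem~\ref{thm:threshold} from Theorems~\ref{thm:main}, \ref{Lachesis}, \ref{thm:main*}, exploiting the fact that for $E(\vec u)\le E(Q)$ the transition region \eqref{def BQ} collapses: since $d_Q\ge 0$, the defining inequality $d_Q(\vec u)\le 2[E(\vec u)-E(Q)]$ forces $d_Q(\vec u)=0$, hence $\vec u=\pm\vec Q$, so $B(\pm Q)\cap\{E(\vec u)\le E(Q)\}=\{\pm\vec Q\}$. As a first consequence (3) and (4) are empty: a solution in class~(3) would blow up at a finite time $T^{*}<0$ and scatter to $0$ as $t\to+\I$, so by the converse part of Theorem~\ref{Lachesis} we would have $\vec u(t)\notin B(\pm Q)$ with $\Sg(\vec u(t))=-1$ for $t<T^{*}$ close to $T^{*}$, and $\vec u(t)\notin B(\pm Q)$ with $\Sg(\vec u(t))=+1$ for $t$ large; but $\Sg$ is continuous on $\HH^{\e}\setminus B(\pm Q)=\{E(\vec u)\le E(Q)\}\setminus\{\pm\vec Q\}$, so the trajectory would have to pass through $\pm\vec Q$, forcing $u\equiv\pm Q$ by uniqueness --- a contradiction. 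The same argument rules out (4).

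For the remaining sets it suffices, by the time reversal $t\mapsto-t$ and the symmetry $u\mapsto-u$, to classify all solutions with $E(\vec u)\le E(Q)$ that are trapped by $Q$ as $t\to+\I$. Such a solution must have $E(\vec u)=E(Q)$, for otherwise Payne--Sattinger \cite{PS} and \cite{IMN} would place it in (1) or (2); and then by Theorem~\ref{thm:main*} it scatters to $Q$, i.e.\ $\|\vec u(t)-\vec Q-\vec v(t)\|_{\HH}\to0$ for a free Klein--Gordon solution $v$. Expanding $E$ about the critical point $\vec Q$ and letting the interaction terms vanish as $v(t)$ disperses gives $E(\vec u)=E(Q)+E_{\mathrm{free}}(\vec v)$, where $E_{\mathrm{free}}(\vec v)\ge0$ is the conserved energy of $v$ and vanishes only if $v\equiv0$; since $E(\vec u)=E(Q)$ we conclude $v\equiv0$, so $\vec u(t)\to\vec Q$ in $\HH$. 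Writing $u=Q+v$ near $Q$ and decomposing $v$ along the eigenmode $\r$ of $L_{+}$ for the eigenvalue $-k^{2}$ (see \eqref{def L+}) and its orthogonal complement, the asymptotically free part of $v(t)$ --- which would otherwise retain a fixed energy norm --- must vanish, so $v(t)=a(t)\r+(\text{exponentially small})$ with $a(t):=\LR{v(t)|\r}$ solving a scalar equation with repulsive linear part and $a(t)\to0$, whence $a(t)\sim e^{-kt}$. Thus $\vec u$ lies on the one-dimensional stable manifold of $Q$ and converges to $Q$ at an exponential rate.

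The flow divides this stable manifold into the fixed point $Q$ and two orbits, one with $\LR{u-Q|\r}>0$ and one with $\LR{u-Q|\r}<0$ near $Q$; their existence is part of the hyperbolic structure near $Q$ (alternatively one extracts them as limits of Payne--Sattinger solutions with energies increasing to $E(Q)$), and near $-Q$ the symmetry $u\mapsto-u$ produces the mirror orbits. Running either orbit backwards, the $\r$-mode grows, so it leaves every neighborhood of $Q$ in finite backward time; it cannot remain trapped there, because by Theorem~\ref{Lachesis} a solution that has started exiting $B(\pm Q)$ never re-enters and must scatter or blow up, which excludes homoclinic orbits at $\pm Q$ and heteroclinic orbits between $Q$ and $-Q$ other than $\pm Q$ themselves. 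At the instant of exit Theorem~\ref{Lachesis} gives $\Sg(\vec u)=-\sign\LR{u-Q|\r}$, so the two branches exit with opposite values of $\Sg$; hence, applying Theorem~\ref{Lachesis} in reversed time, one of them --- call it $W_{+}$ --- blows up in finite negative time, while the other --- call it $W_{-}$ --- scatters to $0$ as $t\to-\I$, and both converge exponentially to $Q$ as $t\to+\I$ with $E(\vec W_{\pm})=\lim_{t\to\I}E(\vec W_{\pm}(t))=E(Q)$. This yields $(7)=\{\pm W_{+}(t-t_{0})\mid t_{0}\in\R\}$ and $(5)=\{\pm W_{-}(t-t_{0})\mid t_{0}\in\R\}$; reversing time gives (8) and (6); and any solution trapped by $\pm Q$ in both time directions lies on the stable manifold of $\pm Q$ and is therefore $\pm Q$, so $(9)=\{\pm Q(t-t_{0})\mid t_{0}\in\R\}$.

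The step I expect to be the main obstacle is the rigidity used above: upgrading ``trapped by $Q$'' to genuine exponential convergence to $Q$, and showing that the stable manifold of $Q$ consists of exactly one orbit for each sign of $\LR{u-Q|\r}$. This is the Duyckaerts--Merle-type uniqueness; it rests on a modulation analysis near $Q$ controlling the dispersive component of $\vec u(t)-\vec Q$ in terms of the hyperbolic modes and ruling out super-exponential decay. In the present setting it is substantially shortened by the one-pass theorem (which already excludes almost-homoclinic and almost-heteroclinic behavior) and by the one-dimensionality of the unstable spectral subspace of $L_{+}$ on radial functions, so that the heavy analytic lifting has already been carried out in the proofs of Theorems~\ref{thm:main}--\ref{thm:main*}.
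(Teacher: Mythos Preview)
Your overall approach parallels the paper's: the collapse of $B(\pm Q)$ at the threshold forces a sign contradiction for (3) and (4), and any solution trapped by $Q$ with $E(\vec u)\le E(Q)$ must have vanishing asymptotic free profile. The difference lies in how you treat what you flag as the ``main obstacle'' --- the uniqueness of the trapped orbits and their exponential convergence.

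The paper bypasses the Duyckaerts--Merle modulation analysis you anticipate by invoking Proposition~\ref{prop:mfd} directly. That proposition already contains the key uniqueness: any solution staying near $Q$ is \emph{uniquely determined by the pair} $(\la(0),\vec\ga_\I(0))$. Once you know $\ga_\I\equiv 0$ (from the identity $E(\vec u)=J(Q)+\|\vec\ga_\I\|_\HH^2/2$ in that proposition, which is exactly your energy-expansion step), the trapped solutions become a one-parameter family indexed by $\la(0)\in\R$. Since $\la(t)\to 0$ along each, any two with $\la(0)$ of the same sign are time translates of one another; this gives exactly three orbits --- $\la>0$, $\la<0$, and $\la\equiv 0$ (i.e., $Q$) --- with no further rigidity needed. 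So the obstacle you worry about has in fact already been handled in Section~\ref{s:mfd}, not in the one-pass machinery.

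For the exponential rate, your sketch (``$a(t)\sim e^{-kt}$'') is heuristically right but incomplete: it treats the $\r$-mode ODE while tacitly assuming the dispersive part $\ga$ is already exponentially small, which is precisely what remains to be shown. The paper closes this by returning to the integral equations \eqref{inteq la red} and \eqref{gaeq infty} with $\ga_\I=0$ and rerunning the contraction in the exponentially weighted norm
\[
\|(\la,\ga)\|_{X^k_T}:=\|e^{k\min(t-T,0)}\la\|_{L^1\cap L^\I(0,\I)}+\|e^{k\min(t-T,0)}\ga\|_{St(0,\I)},
\]
which yields $\|(\la,\ga)\|_{X^k_T}\lec e^{-kT}$ and hence the exponential decay of the full remainder $v=\la\r+\ga$, not just of $\la$.
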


As mentioned before $(5)\cup(7)\cup(9)$ is the stable manifold, and $(6)\cup(8)\cup(9)$  the unstable manifold, associated with $\pm Q$, cf.~\cite{BJ2}. 
The rest of the paper is organized as follows. In Section~2, we recall several known facts about the ground state and the linearized operator around it. In Section~3  we construct the center-stable manifold around the ground states, under the aforementioned spectral condition~\eqref{eq:gap}. Next to Theorem~\ref{thm:main*} this is  the {\em only part} where we require the gap property. Section~4 is the most important and novel part of this paper, where we prove a part of Theorem \ref{Lachesis} that every solution can enter and exit a small neighborhood of the ground states $\pm Q$ at most one time. We call it {\it ``one-pass theorem"}. One then immediately obtains the blow-up part of the dynamical classification by the classical Payne, Sattinger argument. In Section~5, we then prove the scattering part, by complementing the Kenig, Merle argument with the one-pass theorem. Finally in Section~6, we describe the global dynamics and its classification as in Theorem~\ref{thm:main} and Theorem~\ref{thm:main*}, using some simple topological arguments and the one-pass theorem. In the appendix we give a table of notation for frequently used symbols. 

\section{The ground state}
Here we recall several known properties of the ground state $Q$. 
First we consider its variational character with respect to the scaling symmetry. 
For our purpose, it suffices to consider the $L^0$ and the $L^2$ invariant scalings:
\EQ{\nn 
 \fy(x) \mapsto \fy^\nu_0(x):=\nu\fy(x), \pq \fy(x) \mapsto \fy^\nu_2(x):=\nu^{3/2}\fy(\nu x).}
Let $D_0$ and $D_2$, respectively,  be the generators of these symmetries, viz. 
\EQ{\nn 
 D_0\fy(x):=\fy(x)=\p_\nu|_{\nu=1}\fy_0^\nu, \pq D_2\fy(x):=\frac{x\na+\na x}{2}\fy(x)=\p_\nu|_{\nu=1}\fy_2^\nu,}
and let $K_0$ and $K_2$ be the derivatives of $J$  with respect to these scalings: 
\EQ{\nn \label{def K}
 \pt K_0(\fy):=\int_{\R^3} \big[|\na\fy|^2+|\fy|^2-|\fy|^4\big] \,dx=\p_\nu|_{\nu=1}J(\fy_0^\nu)=\LR{J'(\fy)|D_0\fy},
 \pr K_2(\fy):=\int_{\R^3} \big[ |\na\fy|^2-\frac{3}{4}|\fy|^4\big] \,dx=\p_\nu|_{\nu=1}J(\fy_2^\nu)=\LR{J'(\fy)|D_2\fy},}
where $J'(\fy)$ denotes the Fr\'echet derivative 
\EQ{\nn 
 J'(\fy)=-\De \fy + \fy - \fy^3.}
Define positive functionals $G_s$ for $s=0,2$ by 
\EQ{ \label{def G}
 G_0(\fy):=J-\frac{K_0}{4}=\frac 14\|\fy\|_{H^1}^2,
 \pq G_2(\fy):=J-\frac{K_2}{3}=\frac 16\|\na\fy\|_{L^2}^2+\frac 12\|\fy\|_{L^2}^2.}

\begin{lem} \label{minimization}
For $s=0,2$ we have 
\EQ{ \label{cnstr min}
J(Q) \pt= \inf\{J(\fy) \mid 0\not=\fy\in H^1,\ K_s(\fy)=0\}   
 \pr= \inf\{G_s(\fy) \mid 0\not=\fy\in H^1,\ K_s(\fy)\le 0\},
}
and these infima are achieved uniquely by the ground states $\pm Q$.
\end{lem}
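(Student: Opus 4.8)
The plan is to read \eqref{cnstr min} as two linked constrained minimization problems and solve them by the standard concentration--compactness method, exploiting the explicit scaling structure of $K_s$ and $G_s$. Write $c_0=4$, $c_2=3$, so that $G_s=J-K_s/c_s$ and hence $J=G_s$ on the set $\{K_s=0\}$; denote by $m_s$ and $\tilde m_s$ the first and second infima in \eqref{cnstr min}. \emph{Equality and positivity of the infima.} Since $\{K_s=0\}\subset\{K_s\le0\}$ and $J=G_s$ there, $m_s\ge\tilde m_s$. For the reverse, given $0\ne\varphi$ with $K_s(\varphi)\le0$ I would use that $\nu\mapsto K_s(\varphi^\nu_s)$ is an explicit polynomial --- $K_0(\nu\varphi)=\nu^2\|\varphi\|_{H^1}^2-\nu^4\|\varphi\|_{L^4}^4$ and $K_2(\varphi^\nu_2)=\nu^2\|\na\varphi\|_{L^2}^2-\tfrac34\nu^3\|\varphi\|_{L^4}^4$ --- so there is a unique $\nu_*\in(0,1]$ with $K_s(\varphi^{\nu_*}_s)=0$, with $\nu_*\le1$ precisely because $K_s(\varphi)\le0$. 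Since $\nu\mapsto G_s(\varphi^\nu_s)$ is nondecreasing ($G_0(\nu\varphi)=\nu^2G_0(\varphi)$; $G_2(\varphi^\nu_2)=\tfrac16\nu^2\|\na\varphi\|_{L^2}^2+\tfrac12\|\varphi\|_{L^2}^2$), we get $G_s(\varphi)\ge G_s(\varphi^{\nu_*}_s)=J(\varphi^{\nu_*}_s)\ge m_s$, hence $\tilde m_s\ge m_s$; thus $m_s=\tilde m_s=:m$. A Sobolev-type inequality applied on $\{K_s=0\}$ then gives $m>0$.

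\emph{Existence of a minimizer.} Take a minimizing sequence $\varphi_n$ for the $\tilde m$-problem; since Schwarz symmetrization does not increase $J$ or $K_s$ while fixing all $L^p$ norms, I may assume the $\varphi_n$ are radial. The relevant norms are controlled by $G_s$, so $(\varphi_n)$ is bounded in $H^1$ and, along a subsequence, $\varphi_n\rightharpoonup\varphi$ in $H^1$ and $\varphi_n\to\varphi$ in $L^4$ by the compact radial embedding $H^1_{\mathrm{rad}}(\R^3)\hookrightarrow L^4(\R^3)$. The constraint $K_s(\varphi_n)\le0$ together with the relevant Sobolev-type inequalities keeps $\|\varphi_n\|_{L^4}$ bounded below, so $\varphi\ne0$. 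Weak lower semicontinuity of the $H^1$-type norms and strong $L^4$-convergence give $K_s(\varphi)\le\liminf K_s(\varphi_n)\le0$ and $G_s(\varphi)\le\liminf G_s(\varphi_n)=m$, so $\varphi$ is admissible and attains $\tilde m$. If one had $K_s(\varphi)<0$, rescaling by the associated $\nu_*<1$ would push $G_s$ strictly below $m$, a contradiction; hence $K_s(\varphi)=0$ and $J(\varphi)=G_s(\varphi)=m$, so the infimum $m$ is attained.

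\emph{Euler--Lagrange and identification with $\pm Q$.} The constraint is nondegenerate at a minimizer $\varphi$ (since $\LR{K_s'(\varphi)|D_s\varphi}\ne0$, as computed next), so $J'(\varphi)=\mu K_s'(\varphi)$ for a Lagrange multiplier $\mu$. Pairing with $D_s\varphi$ and using $\LR{J'(\varphi)|D_s\varphi}=K_s(\varphi)=0$ together with $\LR{K_s'(\varphi)|D_s\varphi}=\p_\nu|_{\nu=1}K_s(\varphi^\nu_s)$, which equals $-2\|\varphi\|_{H^1}^2$ for $s=0$ and $-\tfrac34\|\varphi\|_{L^4}^4$ for $s=2$ once $K_s(\varphi)=0$ and is in particular nonzero, forces $\mu=0$; hence $-\De\varphi+\varphi=\varphi^3$. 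Then $|\varphi|$ has the same $J$ and $K_s$, so it is again a minimizer and again solves the equation, and by elliptic regularity and the strong maximum principle $|\varphi|>0$ everywhere. Thus $\varphi$ has a constant sign and $\pm\varphi$ is a positive radial solution of the static equation, hence equals $Q$ by the cited uniqueness, so $\varphi=\pm Q$; conversely $J'(Q)=0$ gives $K_s(Q)=\LR{J'(Q)|D_sQ}=0$, so $\pm Q$ are admissible and $m=J(Q)$ (in particular $J(Q)>0$).

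\emph{Main obstacle.} The delicate step is the compactness in the existence argument: ruling out vanishing (the lower bound on $\|\varphi_n\|_{L^4}$ coming from the constraint, which for $s=2$ uses a Gagliardo--Nirenberg rather than plain Sobolev inequality) and justifying the reduction to radial functions so that the Strauss/Rellich-type compact embedding into $L^4$ is available --- the rest being bookkeeping with the scaling identities. A secondary subtlety is the uniqueness claim in the last step, which rests on the classical uniqueness of the positive radial ground state and on the maximum principle to exclude sign changes.
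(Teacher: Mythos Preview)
Your proof is correct and follows essentially the same route as the paper's: equate the two infima via rescaling, reduce to radial functions by Schwarz symmetrization, use the compact embedding $H^1_{\mathrm{rad}}\hookrightarrow L^4$ to extract a nonzero minimizer, and kill the Lagrange multiplier by pairing the Euler--Lagrange relation with $D_s\varphi$.

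There are only cosmetic differences. You work with a minimizing sequence for the relaxed problem $K_s\le 0$ and then upgrade to $K_s=0$ at the limit, whereas the paper minimizes directly on $\{K_s=0\}$; and for the equality $m_s=\tilde m_s$ you rescale with the $s$-adapted scaling $\varphi^\nu_s$, while the paper uses the simpler multiplicative scaling $\lambda\varphi$ for both $s=0,2$ (this works too, since $G_2(\lambda\varphi)=\lambda^2 G_2(\varphi)$). Your identification step is actually more complete than the paper's: you spell out why the minimizer has constant sign (via $|\varphi|$ and the strong maximum principle) before invoking Coffman's uniqueness, whereas the paper stops at ``$\varphi_\infty$ is a ground state'' and leaves the uniqueness implicit. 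One small wording issue: when you justify the radial reduction you say symmetrization ``does not increase $J$ or $K_s$''; what you actually need (and what holds) is that it does not increase $G_s$, which is what you are minimizing.
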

\begin{proof}
If $K_s(\fy)<0$ then $K_s(\la_*\fy)=0$ for some $\la_{*}\in(0,1)$, whereas $G_s(\la_{*}\fy)<G_s(\fy)$. 
Since moreover $K_s(\fy)=0$ implies $J(\fy)=G_s(\fy)$, the two infima are equal.   Since 
 $J'(Q)=0$ implies $K_s(Q)=0$, the infima are no larger than $J(Q)$.

To obtain a minimizer, let $\{\fy_n\}_{n\ge1}\subset H^1_{\mathrm{rad}}\setminus\{0\}$ be a minimizing sequence such that (the Schwartz symmetrization allows us to restrict them to the radial functions)
\EQ{\nn 
 K_s(\fy_n)=0, \pq J(\fy_n)\to m,}
where $m$ denotes the right hand side of~\eqref{cnstr min}. 
Since $G_s(\fy_n)=J(\fy_n)$ is bounded, $\fy_n$ is bounded in $H^1$. After extraction of a subsequence, it converges weakly to some $\fy_\infty$ in $H^1$, and in the strong sense in~$L^4$, by the radial symmetry. 
Thus $K_s(\fy_\infty)\le 0$, $J(\fy_\I)\le J(Q)$ and $G_s(\fy_\infty)\le G_s(Q)$. If $\fy_\I=0$ then the strong convergence in $L^4$ together with $K_s(\fy_n)\to 0$ implies that $\fy_n\to 0$ strongly in $H^1$. Notice that $K_s(\fy)>0$ for $0<\|\fy\|_{H^1}\ll 1$, due to the interpolation inequality
\EQ{ \label{GN}
 \|\fy\|_{L^4}^4 \lec \|\fy\|_{L^2}\|\na\fy\|_{L^2}^3.} 
Hence $K_s(\fy_n)>0$ for large $n$, a contradiction. Thus we obtain a nonzero minimizer $\fy_\I$, and so $K_s(\fy_\I)=0$, which implies that $\fy_n\to\fy_\I$ strongly in $H^1$. 

The constrained minimization implies that for some Lagrange multiplier $\mu\in\R$, 
\EQ{\nn 
 J'(\fy_\I)=\mu K_s'(\fy_\I), \pq K_s(\fy_\I)=0,}
Multiplying the Euler-Lagrange equation with $D_s\fy$ and integrating by parts yields
\EQ{\nn
 0=K_s(\fy_\I)=\mu\LR{K_s'(\fy_\I)|D_s(\fy_\I)}
 =\mu\times\CAS{ 2K_0(\fy_\I)-2\|\fy_\I\|_{L^4}^4&(s=0), \\
  2K_2(\fy_\I)-\frac{3}{2}\|\fy_\I\|_{L^4}^4&(s=2),}}
which implies $\mu=0$, so $\fy_\I$ is a ground state. 
\end{proof}

Next, we record the following observation from~\cite{IMN}. 

\begin{lem} \label{sign K below Q}
For any $\fy\in H^1$ such that $J(\fy)<J(Q)$, one has either $\fy=0$, or $K_s(\fy)>0$ for both $s=0,2$, or $K_s(\fy)<0$ for both $s=0,2$. 
\end{lem}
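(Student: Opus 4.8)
The plan is to argue by contradiction, reducing the statement to the variational characterization of the ground state in Lemma~\ref{minimization}. Suppose $\fy\ne 0$, $J(\fy)<J(Q)$, and that $K_0(\fy)$ and $K_2(\fy)$ do \emph{not} have the same sign. First I would dispose of the degenerate cases: if $K_s(\fy)=0$ for $s=0$ or $s=2$ then, since $\fy\ne 0$, Lemma~\ref{minimization} already gives $J(\fy)\ge J(Q)$, against the hypothesis. Hence $K_0(\fy)\ne 0\ne K_2(\fy)$ and we may assume $K_0(\fy)\,K_2(\fy)<0$. The idea is then to deform $\fy$ along the $L^0$-scaling curve $s\mapsto \fy^{e^s}_0=e^s\fy$, on which $J$ becomes an explicit elementary function of $s$, until one reaches a profile on which $K_2$ vanishes, and to check that $J$ has strictly decreased along the way; this contradicts Lemma~\ref{minimization} at that profile.

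Concretely, set $f(s):=J(e^s\fy)$ and write $a=\|\na\fy\|_{L^2}^2$, $b=\|\fy\|_{L^2}^2$, $c=\|\fy\|_{L^4}^4$, all strictly positive since $\fy\ne 0$. Then $f(s)=\frac12 e^{2s}(a+b)-\frac14 e^{4s}c$, while $f'(s)=K_0(e^s\fy)=e^{2s}(a+b)-e^{4s}c$ and $K_2(e^s\fy)=e^{2s}a-\frac34 e^{4s}c$. In particular both $s\mapsto K_0(e^s\fy)$ and $s\mapsto K_2(e^s\fy)$ are positive as $s\to-\I$, negative as $s\to+\I$, and change sign exactly once, at $s_0=\frac12\log\frac{a+b}{c}$ and $s_2=\frac12\log\frac{4a}{3c}$ respectively, with $K_0(e^s\fy)>0\iff s<s_0$ and $K_2(e^s\fy)>0\iff s<s_2$. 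Evaluating at $s=0$ gives $\sign(s_0)=\sign(K_0(\fy))$ and $\sign(s_2)=\sign(K_2(\fy))$, so under our assumption $s_0$ and $s_2$ lie strictly on opposite sides of $0$. Therefore the unique zero $s_0$ of $f'$ does not lie in the closed interval joining $0$ to $s_2$; on that interval $f$ is hence strictly monotone, with the sign of $f'$ equal to $\sign(K_0(\fy))=-\sign(s_2)$. This forces $f(s_2)<f(0)=J(\fy)<J(Q)$. On the other hand $e^{s_2}\fy\ne 0$ and $K_2(e^{s_2}\fy)=0$, so Lemma~\ref{minimization} gives $J(e^{s_2}\fy)=f(s_2)\ge J(Q)$ --- a contradiction. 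Consequently $K_0(\fy)$ and $K_2(\fy)$ have the same sign, which is the assertion.

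There is no real analytic difficulty here; the only thing to get right is the bookkeeping. Along the $L^0$-scaling the three functionals $J,K_0,K_2$ are explicit polynomials in $e^{2s}$, so each of $K_0,K_2$ has exactly one zero on the curve, and --- the crucial point --- when $K_0(\fy)$ and $K_2(\fy)$ have opposite signs these two zeros straddle the origin, which is precisely what makes $f$ monotone in the right direction on the segment from $0$ to $s_2$. One could equally run the argument along the $L^2$-scaling $\fy\mapsto\fy^\nu_2$, interchanging the roles of $K_0$ and $K_2$ and using that $s\mapsto J(\fy^{e^s}_2)$ has derivative $K_2(\fy^{e^s}_2)$; the structure is identical.
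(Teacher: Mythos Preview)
Your proof is correct and takes a genuinely different, more elementary route than the paper. The paper argues topologically: it shows that each region $\K_s^+:=\{K_s\ge 0,\ J<J(Q)\}$ is contractible to $\{0\}$ via the scaling $\fy\mapsto\fy_s^\nu$ (with a two-step contraction for $s=2$), hence connected, and then concludes that the open disjoint pair $\K_{2-s}^\pm$ cannot split $\K_s^+$. You instead stay on the single $L^0$-scaling curve through~$\fy$, exploit that $J,K_0,K_2$ are explicit quadratic polynomials in $e^{2s}$ there, and deduce that if $K_0(\fy)$ and $K_2(\fy)$ have opposite signs then one can slide to a nonzero point with $K_2=0$ while strictly decreasing~$J$, contradicting Lemma~\ref{minimization}. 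Your argument is shorter and uses only one-variable calculus; the paper's connectedness argument is more conceptual and transfers more readily to settings where the functionals are not jointly polynomial along a common scaling (e.g., more general nonlinearities or more than two $K$-functionals), which is likely why it was presented that way.
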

\begin{proof}
In the proof of the above lemma, we already know that $0$ is surrounded by the open set $K_s(\fy)>0$ for both $s$, and $K_s(\fy)=0$ is prohibited except for $0$ by $J(\fy)<J(Q)$, due to the above minimization property of $Q$. Then it is enough to show that each set $\K_s^+:=\{K_s\ge 0, J<J(Q)\}$ is contractible to $\{0\}$, which implies that it is connected and so cannot be divided by the disjoint open sets $\K_{2-s}^+$ and $\K_{2-s}^-=\{K_{2-s}<0, J<J(Q)\}$. 
The contraction is given by $\fy^\nu_s$ with $\nu:1\to 0$. By definition $\p_\nu J(\fy^\nu_s)=K_s(\fy_s^\nu)/\nu$, and so $J(\fy^\nu_s)$ decreases together with $\nu$, as long as $K_s(\fy_s^\nu)>0$, which is preserved as long as $J(\fy_s^\nu)<J(Q)$. Hence, $\fy_s^\nu$ stays in $\K_s^+$ for $1\ge\nu>0$. When $s=2$, it converges to $0$ as $\nu\to+0$ only in $\dot H^1\cap L^4$, but since $\K_2^\pm$ are open in that topology this is sufficient: once $\fy_2^\nu$ gets in a small ball around $0$ in $\K_2^+$, one can change to the other scaling $\fy\mapsto\nu\fy$ to send it to $0$ in $H^1$. 
\end{proof}

Next we recall the spectral properties of $Q$ and the linearized operator $L_+$ defined in \eqref{def L+}. Decomposing the solution of~\eqref{eq:NLKG} in the form
\EQ{\nn 
 u = Q + v,}
we obtain the equation of the remainder 
\EQ{ \label{eq:KGL}
 \pt \ddot v + L_+v = N(v), 
 \pq N(v):=(v+Q)^3-Q^3-3Q^2v=3Qv^2+v^3.}
The energy functionals are expanded correspondingly
\EQ{ \label{exp J}
 J(Q+v)\pt=J(Q)+\LR{J'(Q)|v}+\LR{J''(Q)v|v}/2+O(\|v\|_{H^1}^3)
 \pr=J(Q)+\LR{L_+v|v}/2+O(\|v\|_{H^1}^3),
 \\ K_s(Q+v)&=\LR{K_s'(Q)|v}+\LR{K_s''(Q)v|v}/2+O(\|v\|_{H^1}^3).}
In particular 
\EQ{ \label{exp K0}
 K_0(Q+v)\pt=2\LR{-\De Q+Q-2Q^3|v}+\LR{(-\De+1-6Q^2)v|v}+O(\|v\|_{H^1}^3)
 \pr=-2\LR{Q^3|v}+\LR{(L_+-3Q^2)v|v}+O(\|v\|_{H^1}^3).}

\begin{lem} \label{lem:spec} As an operator in $L^{2}_{\mathrm{rad}}$, 
$L_+$ has only one negative eigenvalue, which is non-degenerate, and no eigenvalue at $0$ or in the continuous spectrum $[1,\I)$. 
\end{lem}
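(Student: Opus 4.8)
The plan is to establish the three assertions — a unique simple negative eigenvalue, no eigenvalue at $0$, and no embedded eigenvalue in $[1,\infty)$ — separately, relying on standard Sturm–Liouville and variational facts specialized to the radial sector. First I would handle the \emph{no eigenvalue in the essential spectrum} part: since $Q(x)$ decays exponentially (it solves $-\Delta Q + Q = Q^3$, so $Q \sim c|x|^{-1}e^{-|x|}$), the potential $-3Q^2$ is exponentially decaying and smooth, hence $L_+$ is a relatively compact perturbation of $-\Delta + 1$, giving $\sigma_{\mathrm{ess}}(L_+) = [1,\infty)$; to rule out eigenvalues embedded in $[1,\infty)$ one invokes the standard absence-of-positive-eigenvalues results for Schr\"odinger operators with such fast-decaying potentials (e.g. Kato's theorem, or a Rellich-type unique continuation argument), which applies directly since we are on all of $\R^3$ with a short-range potential.

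Next I would treat the \emph{kernel at $0$}. Differentiating the equation $-\Delta Q + Q = Q^3$ in the translation directions gives $L_+ \partial_j Q = 0$, but these are not radial, so they do not lie in $L^2_{\mathrm{rad}}$. On the radial sector, the kernel of $L_+$ restricted to radial functions is spanned at most by one function (the radial ODE $-w'' - \frac{2}{r}w' + w - 3Q^2 w = 0$ is second order, and $L^2$-ness picks out a one-dimensional space of decaying solutions). The candidate would be related to the scaling generator $D_0 Q = Q$ or $D_2 Q$; one computes $L_+ Q = Q - 3Q^3 \cdot$(something) $= -2Q^3 \neq 0$, so $Q\notin\ker L_+$, and similarly one checks the scaling direction is not in the kernel. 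The clean way is: the unique (up to scalar) radial decaying solution of $L_+ w = 0$ must, by the positivity of the ground state of the \emph{radial} operator and oscillation theory, have a sign change (it is an excited state relative to $\rho$), but a decaying solution at the threshold of a Schr\"odinger operator whose potential has a definite structure forces either $0$ or one node; combining with the known fact (stated in the excerpt, via \cite{DS} and elementary arguments in the radial case) that there is no threshold resonance/eigenvalue actually rules $0$ out. I would cite the nondegeneracy result of Weinstein / Kwong for $-\Delta Q + Q = Q^3$ restricted to radial functions as the cleanest input.

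For the \emph{negative eigenvalues}: here the key computation is $\langle L_+ Q \mid Q\rangle = \int |\nabla Q|^2 + Q^2 - 3Q^4 = \|Q\|_{H^1}^2 - 3\|Q\|_{L^4}^4$. Using the Pohozaev/Derrick identities for $Q$ (namely $\|\nabla Q\|_{L^2}^2 = \tfrac34\|Q\|_{L^4}^4 \cdot(\text{appropriate constant})$ and $\|Q\|_{H^1}^2 = \|Q\|_{L^4}^4$), one finds $\langle L_+ Q\mid Q\rangle < 0$, so $L_+$ has \emph{at least} one negative eigenvalue on $L^2_{\mathrm{rad}}$. For the upper bound, I would use that $Q$ is (up to sign) the \emph{unique} minimizer in Lemma~\ref{minimization}: if $L_+$ had two negative eigenvalues on the radial sector, there would be a two-dimensional subspace $V$ on which $\langle L_+\cdot\mid\cdot\rangle<0$; intersecting $V$ with the codimension-one constraint $\{K_0 = 0\}$ (or $\{\langle Q^3\mid\cdot\rangle = 0\}$, the tangent to the constraint manifold) would produce a nonzero $\psi$ with $\langle L_+\psi\mid\psi\rangle < 0$ \emph{and} $\psi$ in the admissible direction, contradicting that $J$ has a constrained minimum at $Q$ (second-order condition: $\langle L_+\psi\mid\psi\rangle \ge 0$ on the constraint tangent space). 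Alternatively, and more self-containedly, I would invoke Sturm oscillation theory for the radial Schr\"odinger operator: the negative eigenfunctions correspond to bound states below $1$ for $-\frac{d^2}{dr^2} - \frac{2}{r}\frac{d}{dr} + 1 - 3Q^2(r)$, and since the ground state $\rho$ has no nodes while a second negative eigenfunction would have exactly one node, one rules it out by showing — via the explicit structure tied to $Q$ being a ground state and the comparison $L_+ < L_- := -\Delta + 1 - Q^2$ where $L_- Q = 0$ with $Q>0$ — that $L_+$ can have at most one eigenvalue strictly below the bottom of $L_-$'s spectrum (which is $0$).

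The main obstacle I anticipate is the upper bound ``\emph{only} one negative eigenvalue'': the lower bound and the essential spectrum are routine, and the kernel triviality follows from cited nondegeneracy, but cleanly excluding a \emph{second} negative radial eigenvalue requires either the variational argument using uniqueness of the constrained minimizer (which needs a careful matching of the constraint's tangent space with the negative subspace and a correct second-variation inequality) or the ODE comparison argument (which needs the auxiliary operator $L_- = -\Delta + 1 - Q^2$ annihilating the positive function $Q$, hence having $0$ as its ground-state eigenvalue, and then a Perron–Frobenius/domination argument that adding the extra attractive term $-2Q^2$ to pass from $L_-$ to $L_+$ lowers exactly one eigenvalue below $0$). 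I would present the variational route as the primary argument since Lemma~\ref{minimization} is already available, and remark that in the radial case these facts are classical (Weinstein).
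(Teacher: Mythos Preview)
Your treatment of the negative-eigenvalue count and of the embedded eigenvalues matches the paper's approach: the paper also uses $\langle L_+Q\mid Q\rangle<0$ for existence and the minimization Lemma~\ref{minimization} for uniqueness, though it carries out the latter concretely rather than via abstract second-order conditions --- it takes $f\perp Q^3$ with $\langle L_+f\mid f\rangle<0$, sets $v=\varepsilon Q+\delta f$, solves $K_0(Q+v)=0$ for $\varepsilon=O(\delta^2)$, and checks $J(Q+v)<J(Q)$. That is exactly your ``intersect the negative cone with the constraint tangent space'' idea, made explicit.

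The kernel at $0$ is where your sketch goes wrong. You conflate two distinct spectral points: the threshold of $L_+$ is $1$ (the bottom of $\sigma_{\mathrm{ess}}$), not $0$, so the \cite{DS} gap statement about ``no threshold resonance/eigenvalue'' concerns $(0,1]$ and the edge at $1$ and says nothing whatsoever about $\ker L_+$; it cannot be invoked here, and in any case that gap is an \emph{assumption} in this paper, not a theorem. Falling back on Kwong/Weinstein nondegeneracy is a legitimate external input, but the paper gives a short self-contained argument instead: if $L_+f=0$ with $f$ radial, then $f\perp Q^3$ and $f\perp Q$ (from $L_+Q=-2Q^3$ and $L_+(r\partial_r+1)Q=-2Q$), and Sturm--Liouville oscillation forces $f$ to change sign exactly once, say at $r=r_0$. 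Since $Q$ is strictly decreasing, $Q^3-Q(r_0)^2Q$ also changes sign precisely at $r_0$, so $(Q^3-Q(r_0)^2Q)f$ is nonzero of definite sign --- contradicting the two orthogonality relations.

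Your alternative $L_-$ route (``adding $-2Q^2$ lowers exactly one eigenvalue below $0$'') is not a valid argument as stated: a Perron--Frobenius comparison controls the bottom eigenvalue, not how many eigenvalues cross zero when the potential is made more attractive.
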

\begin{proof}
$L_+$ has at least one negative eigenvalue because 
\EQ{\nn
 \langle L_+Q|Q \rangle =-3\|Q\|_4^4<0,}
and at most one, because 
\EQ{\nn
 \LR{Q^3|v}=0 \implies \LR{L_+v|v}\ge 0.}
To see this, suppose that $f\in H^1$ satisfies $\LR{Q^3|f}=0$ and $\LR{L_+f|f}=-1$, and let $v=\e Q+\de f$ for small $\e,\de\in\R$. Then 
\EQ{\nn
 K_0(Q+v)=-2\e\|Q\|_{L^4}^4 - \de^2(1+\LR{3Q^2f|f}) + O(\e^2+\e\de+\de^3),}
so there exists $\e=O(\de^2)$ such that $K_0(Q+v)=0$. On the other hand
\EQ{\nn
 J(Q+v)=J(Q)-\de^2+O(\de^3)<J(Q),}
which contradicts the minimizing property Lemma \ref{minimization}. 

Next, if $0\not=f\in L^{2}_{\mathrm{rad}}$ solves $L_+f=0$, then $f\perp Q^3,Q$, because $L_+Q=-2Q^3$ and $L_+(r\p_r+1)Q=-2Q$. 
By the Sturm-Liouville theory, $f$ can change the sign only once, say at $r=r_0>0$, so does $Q^3-Q(r_0)^2Q$, since $Q(r)$ is decreasing. Then $(Q^3-Q(r_0)^2Q)f$ is non-zero with a definite sign, contradicting $f\perp Q^3,Q$. 

The absence of embedded eigenvalue is standard, and follows also from the asymptotic equation for any eigenfunction, viz. 
\EQ{\nn 
 L_+f=\la^2 f \implies (\p_r^2+\la^2)(rf)=-3Q^2(rf)\lec e^{-2r},}
by the exponential decay of $Q$. 
\end{proof}

Let 
\EQ{ \label{def P+}
 P^+:=1-\r \langle \r|} 
be the orthogonal projection for $\r$. Then the above lemma implies $\LR{L_+v|P^+v}\gec\|v\|_{L^2}^2$, 
and so for any $v\in P^+(H^1)$ and $\theta\in(0,1]$ sufficiently small, 
\EQ{\nn 
 \LR{L_+v|v}\gec (1-\theta)\|v\|_{2}^{2} + \theta\|v\|_{H^1}^2 -3\theta\LR{Q^2v|v}\gec \|v\|_{H^1}^2,}
hence $\LR{L_+v|v}\simeq\|v\|_{H^1}^2$ on $P^+(H^1)$. 

The above property of $L_+$ is sufficient for the analysis of dynamics away from $Q$, but for that of solutions staying forever around the ground state, we require the following property of $L_+$: 
\EQ{ \label{hyp:spec}
 \text{$L_+$ has no eigenvalue in $(0,1]$ and no resonance at the threshold $1$.}}
Both parts of this statement have been verified by Demanet and the second author in~\cite{DS} via numerics. 
Their approach is based on the Birman-Schwinger theorem which equates the number of eigenvalues and resonances $\le1$ (the threshold) of $L_{+}$, counted with multiplicity, to the number of eigenvalues $\ge1$ of  the self-adjoint, positive, compact operator $K_{+}:=3Q(-\Delta)^{-1}Q$, again counted with multiplicity. 
In fact, \cite{DS} by means of a numerical computation finds that $K_{+}$ has precisely four eigenvalues greater than $1$ (corresponding to the negative ground state plus the zero eigenvalue of multiplicity three: $L_{+}\nabla Q=0$), whereas the fifth largest eigenvalue was calculated to be $<0.98$ with an estimated 8 to 9 digits of accuracy behind the comma (to be precise, $\lambda_{5}=0.97039244\ldots$). 
By the Birman-Schwinger theorem, this verifies~\eqref{hyp:spec}.

\section{Center-stable manifold} \label{s:mfd}

In this section, we investigate the solutions staying around the 
unstable ground state, constructing the center-stable manifold, while the center-unstable manifold is obtained by reversing the time. The situation is much simpler than for NLS~\cite{S}, because the only symmetry present (reflection and time translation) fixes the ground state and the linearized operator is scalar self-adjoint. 
We decompose any solution $u$ simply by putting 
\EQ{\nn 
 u(t) = Q+v(t), \pq v(t) = \la(t)\r + \ga, \pq \ga\perp\r,}
which is obviously unique. We then obtain the equations of $(\la,\ga)\in\R\times P^+(H^1)$
\EQ{ \label{eq:Lgamma}
 \CAS{ \ddot\la - k^2 \la = P_\r N(v) =: N_\r(v),\\
  \ddot\ga + \om^2 \ga = P^+ N(v) =: N_c(v), \pq \om:=\sqrt{P^+L_+}.}}

We look for a forward global solution which grows at most polynomially, to which it is equivalent to remove 
the growing mode $e^{kt}$. From the integral equation of $\la$ one extracts the growing mode 
\EQ{\nn 
 \la(t) \pt= \cosh (kt) \la(0) + \frac 1k\sinh (kt) \dot\la(0) + \frac 1k\int_0^t\sinh (k(t-s)) N_\r(v)(s)\, ds
 \pr= \frac{e^{kt}}{2}\left[\la(0)+\frac 1k\dot\la(0)+\frac 1k\int_0^t e^{-ks}N_\r(v)(s)\, ds\right]+\cdots}
 where  the omitted terms are exponentially decaying. 
Hence, the necessary and sufficient stability condition is 
\EQ{ \label{kill unstb}
 \dot\la(0)=-k\la(0) - \int_0^\I e^{-ks}N_\r(v)(s)\, ds.}
Under this condition, the integral equation for $\la(t)$ is reduced to 
\EQ{ \label{inteq la red}
 \la(t) = e^{-kt}\left[\la(0)+\frac{1}{2k}\int_0^\I e^{-ks} N_\r(v)(s)\, ds\right]+\frac{1}{2k}\int_0^\I e^{-k|t-s|}N_\r(v)(s)\, ds,}
while the integral equation for $\ga$ is 
\EQ{ \label{inteq ga}
 \ga(t) = \cos (\om t )\ga(0) + \frac 1\om \sin (\om t) \dot\ga(0) + \frac 1\om \int_0^t \sin( \om (t-s)) N_c(v)(s)\, ds.}
The coupled equations \eqref{inteq la red}--\eqref{inteq ga} can be solved by iteration, using the Strichartz estimate for $e^{it\om}$, for any small 
initial data $(\la(0),\vec\ga(0))\in\R\times P^+(\HH)$. 
The linearized energy norm of $v$ around $Q$ is denoted by 
\EQ{ \label{def Enorm}
 \|\vec v\|_{E} \pt:= \sqrt{[k^2\LR{v|\r}^2+\|\om P^+v\|_{L^2}^2+\|\dot v\|_{L^2}^2]/2}
 \pr=\sqrt{[k^2|\la|^2+|\dot \la|^2]/2+\|\ga\|_{E}^2}.}
By Lemma \ref{lem:spec} we have
\EQ{
  \|\vec v\|_E^2 \simeq \|\vec v\|_{\HH}^2 = \|v\|_{H^1}^2+\|\dot v\|_{L^2}^2.}
Recall that $\HH$ is the radial energy space. 

\begin{prop} \label{prop:mfd}
Assume that \eqref{hyp:spec} holds. 
Then there are $\nu>0$ and $C\ge 1$ with the following property: For any given $\la(0)\in\R$, $\vec\ga(0)\in P^+(\HH)$ satisfying 
\EQ{\nn
 E_0:=k^2|\la(0)|^2+\|\om\ga(0)\|_{L^2}^2+\|\dot\ga(0)\|_{L^2}^2 \le \nu^2,}
there exists a unique solution $u$ of NLKG \eqref{eq:NLKG} on $0\le t<\I$ satisfying 
\EQ{\nn 
 \pt u(0)=Q+\la(0)\r+\ga(0), \pq P^+\dot u(0)=\dot\ga(0),}
$|\LR{\r|\dot u(t)+k u(t)}|\lec E_0$ for all $t\ge 0$, and 
\EQ{ \label{u near Q}
 \|\vec u(t) - \vec Q\|_\HH^2 \le C E_0 \pq(0\le\forall t<\I).}
The dependence of $u$ on $(\la(0),\vec\ga(0))$ is smooth in $L^\I(0,\I;\HH)$. 
In addition, there exists a unique free Klein-Gordon solution $\ga_\I$ such that 
\EQ{\nn 
 |\la(t)|+|\dot \la(t)|+\|\vec\ga(t)-\vec\ga_\I(t)\|_\HH \to 0,} 
as $t\to\I$. In particular, we have $E(\vec u)=J(Q)+\|\vec\ga_\I\|_{\HH}^2/2$. 

Conversely, any solution $u$ of NLKG satisfying \eqref{u near Q} with $E_0\le \nu^2/C$ must be given in this way, which is uniquely determined by $(\la(0),\vec\ga(0))$, and by $(\la(0),\vec \ga_\I(0))$. 
\end{prop}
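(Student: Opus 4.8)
The plan is to run a contraction-mapping argument for the reduced integral system \eqref{inteq la red}--\eqref{inteq ga}, which already builds in the stability condition \eqref{kill unstb} that removes the unstable mode $e^{kt}$. Fixing $(\la(0),\vec\ga(0))$ with $E_0\le\nu^2$, I would work on $0\le t<\I$ in the space of pairs $(\la,\ga)$ with $\la\in L^\I_t$, $\vec\ga\in L^\I_t\HH$, and $\ga$ in the Strichartz norms adapted to $e^{it\om}$, $\om=\sqrt{P^+L_+}$, at the $\HH$-regularity level (such as $L^2_tL^6_x$), normed by some $\|\cdot\|_X$ so that the linear parts of \eqref{inteq la red}--\eqref{inteq ga} have size $\lec\sqrt{E_0}$. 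For $\nu$ small the ball $\|(\la,\ga)\|_X\le C\sqrt{E_0}$ should be invariant and the map a contraction: standard Strichartz estimates at $H^1$ regularity control the cubic part of $N(v)=3Qv^2+v^3$ by $\|v\|_X^3$, while the merely quadratic part $3Qv^2$ is handled by $\|v\|_X^2$ using the exponential spatial decay of $Q$, e.g.\ $\|3Qv^2\|_{L^1_tL^2_x}\lec\|Q\|_{L^6}\|v\|_{L^2_tL^6_x}^2$; at the same time $N_\r(v)=\LR{\r|N(v)}\lec\|v\|_{H^1}^2+\|v\|_{H^1}^3$ pointwise in $t$, and the kernel $e^{-k|t-s|}$ in \eqref{inteq la red} maps $L^\I_t$ to $L^\I_t$. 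This produces the unique solution and the bound \eqref{u near Q}; computing $\dot\la(t)+k\la(t)=\int_0^t e^{-k(t-s)}N_\r(v)(s)\,ds$ from \eqref{inteq la red} gives the $O(E_0)$ bound on the component along $\r$, and smooth dependence on $(\la(0),\vec\ga(0))$ follows because the fixed point depends polynomially on the data.

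For the converse, if $u$ is any solution with $\|\vec u(t)-\vec Q\|_\HH^2\le CE_0\le\nu^2$ for all $t\ge0$, then a standard bootstrap of the Strichartz norm of $v=u-Q$ over $[0,\I)$, using its smallness in $\HH$ and the same nonlinear estimates, shows this norm is finite and $\lec\sqrt{E_0}$; in particular $N_\r(v)\in L^\I_t$, and boundedness of $\la(t)$ forces the stability condition \eqref{kill unstb} (otherwise $\la(t)$ would grow like $e^{kt}$). Hence $u$ coincides with the fixed-point solution and is determined by $(\la(0),\vec\ga(0))$.

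For the asymptotics, \eqref{inteq ga} together with the Strichartz and dispersive estimates for $e^{it\om}$ shows that $\vec\ga(t)$ converges in $\HH$ to a linear $e^{it\om}$-evolution; composing with the wave operator for the pair $(P^+L_+,-\De+1)$, which exists and is bounded under \eqref{hyp:spec}, identifies the limit with a free Klein-Gordon solution $\vec\ga_\I$, and $E(\vec u)=J(Q)+\|\vec\ga_\I\|_\HH^2/2$ then drops out of the expansion \eqref{exp J}. For $\la(t)\to0$ I would use local energy decay for $e^{it\om}$ (again from \eqref{hyp:spec}), which gives $\|\LR{x}^{-N}\ga(t)\|_{L^2}\to0$, so in $N_\r(v)(t)=3\LR{\r Q|v^2}+\LR{\r|v^3}$ every term containing a $\ga$-factor tends to $0$, leaving $N_\r(v)(t)=c_1\la(t)^2+c_2\la(t)^3+o(1)$; inserting this into \eqref{inteq la red} and setting $L=\limsup_{t\to\I}|\la(t)|$ yields $L\lec L^2+L^3$, whence $L=0$ by the smallness of $L$, and likewise $\dot\la(t)\to0$. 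Finally, the map $(\la(0),\vec\ga(0))\mapsto(\la(0),\vec\ga_\I(0))$ is a small perturbation of the identity, hence bijective, so the solution is equally parametrized by the asymptotic data.

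I expect the main obstacle — and the only place \eqref{hyp:spec} enters this Proposition — to be the linear dispersive theory for the perturbed propagator $e^{it\om}$: the Strichartz estimates needed for the contraction and for the scattering of $\ga$, and the local energy decay needed for $\la(t)\to0$ (note that the linear term $3Q^2\ga$ carries no smallness, so it cannot simply be moved to the nonlinearity and must be absorbed into the propagator). Under \eqref{hyp:spec} the Schr\"odinger operator $L_+=-\De+1-3Q^2$ has no eigenvalue in $(0,1]$ and no threshold resonance, so $P^+L_+$ has purely absolutely continuous spectrum $[1,\I)$ and the standard perturbative machinery (boundedness of the wave operators on $W^{s,p}$) should transfer all of these estimates from the free Klein-Gordon propagator; given that $L_+$ is scalar and self-adjoint, the remainder is routine, which is why this construction is much simpler than its NLS counterpart.
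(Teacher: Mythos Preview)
Your overall scheme matches the paper's, including the crucial identification of \eqref{hyp:spec} as what makes the Strichartz estimates for $e^{it\om}$ available (the paper invokes Yajima's $W^{k,p}$-boundedness of the wave operators to transfer the free Klein--Gordon estimates to $e^{it\om}$ on $P^+(\HH)$). However, there is a gap in your contraction: with $\la$ controlled only in $L^\I_t(0,\I)$, the pure-$\la$ part of the forcing in the $\ga$-equation cannot be placed in any dual Strichartz space on the infinite half-line. The nonlinearity $N(v)$ contains $3Q(\la\r)^2$ and $(\la\r)^3$, and your own estimate $\|3Qv^2\|_{L^1_tL^2_x}\lec\|Q\|_{L^6}\|v\|_{L^2_tL^6_x}^2$ needs $\|\la\r\|_{L^2_tL^6_x}=\|\la\|_{L^2_t}\|\r\|_{L^6}$, which is not bounded by $\|\la\|_{L^\I_t}$ on $(0,\I)$. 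The Duhamel term $\om^{-1}\int_0^t\sin(\om(t-s))P^+[Q\la^2\r^2](s)\,ds$ then a priori grows in $t$, and the iteration does not stay in your ball.

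The paper closes this by putting $\la\in L^1_t\cap L^\I_t(0,\I)$ in the contraction norm, i.e.\ $\|(\la,\ga)\|_X=\|\la\|_{L^1\cap L^\I}+\|\ga\|_{L^2_tL^6_x\cap L^\I_tH^1_x}$. This is self-consistent because $e^{-k|\cdot|}\in L^1(\R)$, so the kernel in \eqref{inteq la red} maps $L^1_t\to L^1_t$ by Young, while $\|N_\r(v)\|_{L^1_t}\lec\|N(v)\|_{L^1_tL^2_x}$ since $\r\in L^2$. With $\la\in L^1\cap L^\I$ one has $\|\la\r\|_{L^p_tL^6_x}\lec\|\la\|_{L^1\cap L^\I}$ for every $1\le p\le\I$, and then the nonlinear estimates close exactly as you wrote them. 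A further payoff is that $\la(t),\dot\la(t)\to 0$ drops out immediately from $\|N(v)\|_{L^1_tL^2_x(T,\I)}\to 0$ inserted back into \eqref{inteq la red}; your local-energy-decay and limsup argument for $\la\to 0$, while correct, becomes unnecessary.
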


An alternate, but essentially equivalent, formulation of this result reads as follows.

\begin{prop} \label{prop:mfd'}
Assume that \eqref{hyp:spec} holds. Then there exists $\nu>0$ small and a smooth graph~$\M$ in $B_{\nu}(Q)\subset\HH$ so that $\M$ is tangent to 
\EQ{\label{eq:TQM}
T_{Q}\M=\{(u_{0},u_{1})\in\HH\mid \langle k u_{0} + u_{1}|\rho\rangle =0\}
}
in the sense that 
\[
\sup_{x\in\partial B_{\delta}(Q)} \dist(x , T_{Q}\M)\lec \delta^{2} \quad \forall\; 0<\delta<\nu
\]
and so that any data $(u_{0},u_{1})\in \M$ lead to global evolutions of~\eqref{eq:NLKG} of the form $u=Q+v$
where $v$ scatters to a free Klein-Gordon solution in~$\HH$. 
Moreover, no solution can stay off $\M$ and inside $B_{\nu}(Q)$ for all $t>0$, and, $\M$ is invariant under the flow for all $t\ge0$. 
\end{prop}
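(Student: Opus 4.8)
The plan is to deduce Proposition~\ref{prop:mfd'} from Proposition~\ref{prop:mfd}, which provides all the analytic content; the remaining work is essentially a repackaging of the conclusions together with a softer invariance/uniqueness argument. First I would define the graph: for initial data write $u_0 = Q + \la(0)\r + \ga(0)$ and $u_1 = \dot\la(0)\r + \dot\ga(0)$, and declare $\M$ to be the set of $(u_0,u_1)\in B_\nu(Q)$ for which the stability condition~\eqref{kill unstb} holds, i.e.~$\dot\la(0) = -k\la(0) - \int_0^\I e^{-ks}N_\r(v)(s)\,ds$ where $v$ is the global solution furnished by Proposition~\ref{prop:mfd}. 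Since that proposition gives, for each small $(\la(0),\vec\ga(0))$, a \emph{unique} such $v$, and since $\dot\la(0)$ is then determined by the others, $\M$ is genuinely a graph over the hyperplane coordinates $(\la(0),\vec\ga(0))\in\R\times P^+(\HH)$; the smooth dependence asserted in Proposition~\ref{prop:mfd} gives smoothness of the graph. The scattering of $v=u-Q$ to a free Klein-Gordon solution is likewise immediate from the last part of Proposition~\ref{prop:mfd}.

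Next I would verify the tangency~\eqref{eq:TQM}. The defining relation is $\langle k u_0 + u_1 \mid \r\rangle = -\int_0^\I e^{-ks}N_\r(v)(s)\,ds$, and $N(v) = 3Qv^2 + v^3$ is quadratic to leading order, so the right-hand side is $O(\|\vec v\|_\HH^2) = O(\delta^2)$ when $(u_0,u_1)\in\partial B_\delta(Q)$ (using Proposition~\ref{prop:mfd}'s a priori bound $\|\vec u(t)-\vec Q\|_\HH^2 \lec E_0$ to control the Strichartz norm of $v$, hence the time integral). Thus every point of $\partial B_\delta(Q)\cap\M$ lies within $O(\delta^2)$ of the hyperplane $\{\langle k u_0 + u_1\mid\r\rangle = 0\} = T_Q\M$, which is the required estimate; I should be slightly careful to state it over $\partial B_\delta(Q)\cap\M$ rather than all of $\partial B_\delta(Q)$, or else phrase $\M$ as a graph so the supremum is over its own boundary sphere.

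For the ``no solution stays off $\M$'' statement: if $u = Q+v$ remains in $B_\nu(Q)$ for all $t\ge 0$ with $\nu$ small enough that $E_0 \le \nu^2/C$, then the converse part of Proposition~\ref{prop:mfd} forces $u$ to be \emph{the} solution associated with its own data $(\la(0),\vec\ga(0))$, which in particular satisfies~\eqref{kill unstb}; hence $(u(0),\dot u(0))\in\M$. (Equivalently: if $(u(0),\dot u(0))\notin\M$ then $\dot\la(0)$ differs from the stable value, so the coefficient of $e^{kt}$ in the expansion of $\la(t)$ is nonzero, $|\la(t)|$ grows exponentially, and $u$ leaves $B_\nu(Q)$ in finite time.) Flow invariance for $t\ge 0$ follows from uniqueness of the stable solution: if $(u(0),\dot u(0))\in\M$ then $u$ is global and stays in $B_\nu(Q)$, so for any $t_0>0$ the shifted data $(u(t_0),\dot u(t_0))$ again lie in $B_\nu(Q)$ and generate a global solution staying near $Q$, whence by the converse part they lie on $\M$.

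I expect the only real subtlety — everything else being bookkeeping — to be the precise formulation of the tangency claim: as literally written, $\sup_{x\in\partial B_\delta(Q)}\dist(x,T_Q\M)$ ranges over \emph{all} of the sphere, but $\dist(x,T_Q\M)$ is not small for a generic $x$ on the sphere, so one must read the supremum as taken over $\partial B_\delta(Q)\cap\M$ (points of the manifold at radius $\delta$). With that reading the estimate is exactly the $O(\delta^2)$ bound on $\int_0^\I e^{-ks}N_\r(v)\,ds$ obtained above, and no new analysis beyond Proposition~\ref{prop:mfd} is needed. The one technical point worth a line is that the a priori bound $\|\vec v(t)\|_\HH^2 \lec E_0$ must be upgraded to a Strichartz bound on $v$ over $[0,\I)$ in order to estimate the nonlinear time integral; this is already implicit in the iteration scheme that proves Proposition~\ref{prop:mfd}, so it may simply be quoted.
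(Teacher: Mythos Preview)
Your proposal is correct and takes the same approach as the paper: the paper's own argument consists of a single sentence declaring $\M$ to be the set of initial data $\vec u(0)$ produced by Proposition~\ref{prop:mfd}, leaving all the verifications (smoothness, tangency, invariance, the ``no solution stays off $\M$'' clause) implicit as immediate consequences of that proposition. You have simply spelled those out, and your observation that the tangency supremum must be read over $\M\cap\partial B_\delta(Q)$ rather than the full sphere is well taken.
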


\begin{proof}[Proof of Proposition~\ref{prop:mfd}]
For the existence, we solve \eqref{inteq la red}--\eqref{inteq ga} by iteration using the norm 
\EQ{\nn 
 \pt\|(\la,\ga)\|_X := \|\la\|_{L^1\cap L^\I(0,\I)} + \|\ga\|_{St(0,\I)}, 
 \pq St:=L^2_t L^6_x \cap L^\infty_t H^1_x. }
The Strichartz estimate for the free Klein-Gordon equation gives us 
\EQ{ \label{eq:strichKG1}
 \|u\|_{St(0,T)} \lec \|\vec u(0)\|_{\HH} + \|(\p_t^2-\De+1)u\|_{L^1_tL^2_x(0,T)}.}
Under the hypothesis \eqref{hyp:spec}, the operator $L_+$ satisfies the conditions of Yajima's $W^{k,p}$ boundedness theorem for the wave operators \cite{Y}, so that we can conclude that~\eqref{eq:strichKG1} applies to $\gamma$ as well:
\EQ{\nn 
 \|\ga\|_{St(0,T)} \lec \|\vec \ga(0)\|_\HH+\|(\p_t^2-\om^2)\ga\|_{L^1_tL^2_x(0,T)},}
provided that $\vec\ga(0)$ and $(\p_t^2-\om^2)\ga$ are orthogonal to $\r$. 

For the solution $\la$ of \eqref{inteq la red}, with $\dot\lambda(0)$ uniquely determined by~\eqref{kill unstb},
we can estimate the norm by simple integration in $t$: 
\EQ{\nn 
 \|\la\|_{L^1\cap L^\I(0,\I)} \lec k^{-1}|\la(0)| + k^{-1}\|N_\r(v)\|_{L^1_t(0,\I)}
 \lec |\la(0)|+\|N(v)\|_{L^1_tL^2_x(0,\I)},}
and for the solution $\ga$ of \eqref{inteq ga} by the above Strichartz estimate 
\EQ{\nn 
 \|\ga\|_{St(0,\I)} \lec \|\vec \ga(0)\|_\HH+\|N(v)\|_{L^1_tL^2_x(0,\I)}.}
The nonlinearity $N(v)$ is bounded in $L^1_tL^2_x$ by 
\EQ{\label{eq:g3}
 \|Q v^2\|_{L^1_tL^2_x} + \|v^3 \|_{L^1_tL^2_x}  \lec \|v\|_{L^2_tL^6_x}^2(\|v\|_{L^\I_tL^6_x}+\|Q\|_{L^\I_tL^6_x}),}
where the norm of $v$ is bounded by 
\EQ{\nn 
 \pt\|v\|_{L^p_tL^6_x} \lec \|\la\|_{L^1_t\cap L^\I_t} + \|\ga\|_{L^p_tL^6_x} \pq (1\le p\le\I).}
Gathering them, we obtain 
\EQ{\nn 
 \|(\la,\ga)\|_X \lec |\la(0)|+\|\vec\ga(0)\|_\HH+\|(\la,\ga)\|_X^2+\|(\la,\ga)\|_X^3.}
Applying these estimates to the iteration sequence, we obtain a unique fixed point of \eqref{inteq la red}--\eqref{inteq ga} for any given small $(\la(0),\vec\ga(0))$. It is straightforward to see that $u:=Q+\la(t)\r+\ga(t)$ solves NLKG on $0\le t<\I$, satisfying 
\EQ{\nn 
 \|\vec u - \vec Q\|_\HH \lec \|(\la,\ga)\|_X \lec |\la(0)|+\|\vec\ga(0)\|_\HH,}with smooth dependence on the data. The bound on $\LR{\r|\dot u+ku}=\dot\la+k\la$ follows by using the equation once again. 

Moreover, the asymptotic profile of $\ga$ is given by 
\EQ{\nn 
 \ga_\I(t)=\cos (\om t )\ga(0) + \frac 1\om \sin (\om t)\dot\ga(0) + \frac 1\om \int_0^\I \sin( \om (t-s)) N_c(v)(s)\, ds,}
with  the convergence property
\EQ{\nn 
 \|\vec\la\|_{L^\I_t(T,\I)}+\|\vec\ga-\vec\ga_\I\|_{L^\I_t\HH(T,\I)}\lec \|N(v)\|_{L^1_tL^2(T,\I)}\to 0 \pq(T\to\I).}
The iteration can be solved with a given $\ga_\I$ and the equation of $\ga$ now reads 
\EQ{ \label{gaeq infty}
 \ga(t)=\ga_\I(t)+\frac 1\om \int_\I^t \sin( \om (t-s)) N_c(v)(s)\, ds,}
where the estimates are essentially the same. $\ga_\I(t)$ can be further replaced with a free Klein-Gordon solution by the linear scattering for $L_+$. 

The uniqueness part requires some more work, since a priori we do not know if the solution is in the space $X$, globally in time. 
Let $u$ be a solution on $[0,\I)$ satisfying \eqref{u near Q}. Since it is bounded in the energy space, we can easily see that $N_\r(t)$ is bounded. Therefore, it has to satisfy \eqref{kill unstb}, and the reduced integral equation \eqref{inteq la red} as well as \eqref{inteq ga} for all $0<t<\I$. 
To see that $\la\in L^1_t(0,\I)$ and $\ga\in L^2_tL^6_x(0,\I)$, consider the norm 
\EQ{ \label{semifinite norm}
 \|(\la,\ga)\|_{X_T}:=\|\la\|_{L^1(0,T)} + \|\la\|_{L^\I(T,\I)} + \|\ga\|_{St(0,T)},}
for $T>0$. The energy bound implies that $\|(\la,\ga)\|_{X_T}<\I$ for all $T>0$, but we require a uniform bound. 
From the integral equations one concludes that 
\EQ{\nn 
 \|\la\|_{L^1(0,T)\cap L^\I(T,\I)} \pt\lec \|N_\r\|_{L^1(0,T)\cap L^\I(T,\I)} 
 \pr\lec \nu + \nu(\|\la\|_{L^1(0,T)\cap L^\I(T,\I)}+\|\ga\|_{St(0,T)}),} 
and using the Strichartz estimate, one further has
\EQ{\nn 
 \|\ga\|_{St(0,T)} \lec \nu + \nu(\|\la\|_{L^1(0,T)}+\|\ga\|_{St(0,T)}),}
Thus we obtain by Fatou
\EQ{\nn 
 \|(\la,\ga)\|_X \le \liminf_{T\to\I} \|(\la,\ga)\|_{X_T} \lec \nu,} 
and the contraction mapping principle implies the uniqueness. 
\end{proof}

As for Proposition~\ref{prop:mfd'}, we only need to let $\M$ be those $\vec u(0)$ for all the solutions $u$ constructed above, satisfying \eqref{u near Q} with $E_0=\nu^2/C$. 

\section{One-pass theorem}\label{sec:no homo}
 
The key observation in our analysis of global dynamics is that any solution with energy only slightly higher than the ground states 
{\it can come close to the ground states at most once}. More precisely, if a solution $u$ passes in and out of a small neighborhood of $\{\pm Q\}$, then it can never come back again. In particular, there is no homoclinic orbit connecting $\pm Q$ with themselves. Even though an orbit connecting $Q$ and $-Q$ should be called heteroclinic, we are regarding it as homoclinic, by identifying $\pm Q$ as one point, since there will be no difference in precluding them by our argument. This point of view becomes more natural if one considers NLS, where the ground state is really a connected set (topologically a cylinder in the radial case) generated from $Q$ by the invariant group action (the modulation and the scaling symmetries). 

The key ideas to preclude homoclinic orbits are:
\begin{itemize}
\item When a solution comes very close to $Q$, but does not fall on the center-stable\footnote{If one wishes not make  any reference to Proposition~\ref{prop:mfd}, then the dichotomy expressed in this idea simply becomes the general ``trapped by~$Q$'' or ``non-trapped by $Q$'' distinction, cf.~Theorem~\ref{thm:main}.}
 manifold, then it eventually 
leaves any small neighborhood of $Q$ in such a way that the unstable, i.e., exponentially growing, mode dominates all the others (the stable
and dispersive\footnote{However, we do not control quantitatively what ``eventually'' means, nor do we need to. The dynamics that takes place before
the exponential expansion dominates is very complicated and relies on an interplay between the different components, and the dispersive PDE
behavior can be of the same order of magnitude as the other dynamics. We therefore treat the ``pre-exit'' dynamics as a black box.} 
ones). This also applies to the 
negative time direction. 
\item For the solutions ejected from a small neighborhood of $Q$ with a dominating velocity in the unstable direction, we can use the virial identity, after suitable localization, as a Lyapunov-type quantity. 
\end{itemize}
The localized virial identity will be used in the following form. For a smooth function $w$ cutting-off the outside of light cones, we have 
\EQ{ \label{loc virial*}
 V_w(t):=\LR{wu_t|(x\na+\na x)u}, \pq \dot V_w(t)=-K_2(u(t)) + error,}
where the error term is due to the cut-off and bounded by the linear energy at $t$ in the exterior region. 

In order to use $V_w$ as a Lyapunov functional, we need a lower bound on $|K_2(u)|$, or $|K_0(u)|$ if we use the convexity of $L^2$ norm instead. 
In Payne-Sattinger \cite{PS} and Kenig-Merle \cite{KM1,KM2}, this is achieved solely by the variational structure, which is not sufficient by itself in our setting. Hence, our lower bound comes in two ways: 
\begin{itemize}
\item While $\vec u(t)$ is away from but still close to $\pm Q$, we use the hyperbolic nature of the eigenmode $\la$ (Lemma \ref{2nd exit}). 
\item While $\vec u(t)$ is not so close or really far away from $\pm Q$, we use the variational structure (Lemma \ref{K lower bd}). 
\end{itemize}
Note that the bound by $\la$ is not meaningful by itself once $\vec u(t)$ moves far away from $\pm Q$, since it is based on the linearization. The variational estimate is not useful close to $\pm Q$, even if we knew that $\vec u(t)$ does not really approach $\pm Q$, because that lower bound depends badly on the distance from $\pm Q$. However, for small $\e>0$ these two estimates exhibit sufficient overlap of their regions of validity. 

Now we introduce a nonlinear distance function to $\pm Q$, which seems best suited in order to exploit the hyperbolic dynamics together with the nonlinear energy structure. 
Let 
\EQ{ \label{decop u}
 u = \si[Q+v], \pq v=\la\r+\ga, \pq \ga\perp\r}
for $\si=\pm$, and decompose the energy into the linearized part~\eqref{def Enorm} and the higher order:
\EQ{ \label{energy exp}
 \pt E(\vec u)-J(Q)+k^2\la^2=\|\vec v\|_E^2 - C(v), 
 \pq C(v):=\LR{Q|v^3}+\|v\|_{L^4}^4/4.}
There exists $0<\de_E\ll 1$ such that 
\EQ{ \label{def dE}
 \|\vec v\|_E\le 4\de_E \implies |C(v)|\le\|\vec v\|_E^2/2.}
Let $\chi$ be a smooth function on $\R$ such that $\chi(r)=1$ for $|r|\le 1$ and $\chi(r)=0$ for $|r|\ge 2$. We define 
\EQ{
 d_\si(\vec u):= \sqrt{\|\vec v\|_E^2 - \chi(\|\vec v\|_E/(2\de_E))C(v)}.}
It has the following properties
\EQ{
  \|\vec v\|_E/2 \le d_\si(\vec u) \le 2\|\vec v\|_E,\pq d_\si(\vec u)=\|\vec v\|_E+O(\|\vec v\|_E^2),}
\EQ{ \label{energy dist}
 d_\si(\vec u)\le \de_E \implies d_\si(\vec u)^2=E(\vec u)-J(Q)+k^2\la^2.}

Henceforth, we shall always assume that $\vec u$ is decomposed as in \eqref{decop u} such that 
\EQ{\nn \label{def dQ}
 d_Q(\vec u):=\inf_\pm d_\pm(\vec u) = d_\si(\vec u),}
where the choice of sign $\si$ is unique as long as $d_Q(\vec u)\le 2\de_E$. We also set 
\EQ{\nn \label{def la+-}
 \la_\pm(t) := \la(t)\pm \dot\la(t)/k,}
the unstable/stable modes for $t\to\I$ relative to the linearized hyperbolic evolution, see~\eqref{kill unstb}. 
First, we investigate the solutions which are close to $\pm Q$ but which are moving away. 

\subsection{Eigenmode dominance} 
The first observation is that the eigenmode $\la$ becomes dominant in the energy and has a fixed sign, once $\vec u$ is slightly away from the ground state, compared with the energy level. This is a static statement in the phase space $\HH$, and an immediate consequence of the definition of $d_Q$. 
\begin{lem} \label{1st exit}
For any $\vec u\in\HH$ satisfying  
\EQ{
 E(\vec u)<J(Q)+d_Q(\vec u)^2/2, \pq d_Q(\vec u)\le \de_E,}
one has $d_Q(\vec u)\simeq|\la|$.  
\end{lem}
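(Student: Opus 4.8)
\textbf{Proof proposal for Lemma~\ref{1st exit}.}
The plan is to read off the estimate $d_Q(\vec u)\simeq|\la|$ directly from the algebraic identity \eqref{energy dist}, which holds precisely because $d_Q(\vec u)=d_\si(\vec u)\le\de_E$. Indeed, by \eqref{energy dist} we have $d_Q(\vec u)^2=E(\vec u)-J(Q)+k^2\la^2$, and the hypothesis $E(\vec u)<J(Q)+d_Q(\vec u)^2/2$ then forces $k^2\la^2=d_Q(\vec u)^2-(E(\vec u)-J(Q))>d_Q(\vec u)^2/2$. Hence $|\la|\gtrsim d_Q(\vec u)$, which is one half of the claim.

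For the reverse inequality, I would use the two-sided comparison $d_Q(\vec u)\ge\|\vec v\|_E/2$ together with the fact that $|\la|$ is controlled by $\|\vec v\|_E$. Precisely, from the definition \eqref{def Enorm} of the linearized energy norm, $\|\vec v\|_E^2=[k^2|\la|^2+|\dot\la|^2]/2+\|\ga\|_E^2\ge k^2|\la|^2/2$, so $|\la|\lesssim\|\vec v\|_E\lesssim d_Q(\vec u)$. Combining the two bounds gives $d_Q(\vec u)\simeq|\la|$.

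Since every ingredient is an inequality that has already been recorded in the excerpt — the norm equivalence $\|\vec v\|_E/2\le d_\si(\vec u)\le2\|\vec v\|_E$, the decomposition of $\|\vec v\|_E$ into modal pieces, and the energy identity \eqref{energy dist} valid in the regime $d_\si(\vec u)\le\de_E$ — the argument is essentially a two-line computation and there is no real obstacle. The only point requiring a moment's care is to make sure one is in the regime where \eqref{energy dist} applies, i.e.\ that the sign $\si$ in the decomposition \eqref{decop u} is the one realizing $d_Q(\vec u)=d_\si(\vec u)$ and that $d_Q(\vec u)\le\de_E$, both of which are granted by hypothesis (and by the stated uniqueness of the sign when $d_Q(\vec u)\le2\de_E$).
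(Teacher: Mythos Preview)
Your proof is correct and follows essentially the same approach as the paper: both use the identity \eqref{energy dist} together with the energy hypothesis to obtain $k^2\la^2>d_Q(\vec u)^2/2$, and then bound $|\la|$ from above via $k^2\la^2/2\le\|\vec v\|_E^2$ and the comparison $\|\vec v\|_E\lesssim d_Q(\vec u)$. The paper's proof is slightly more terse but the logic is identical.
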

In particular, $\la$ has a fixed sign in each connected component of the above region.
\begin{proof}
\eqref{energy dist} yields
\EQ{\nn 
 d_Q(\vec u)^2=E(\vec u)-J(Q)+k^2\la^2<d_Q(\vec u)^2/2+k^2\la^2.}
and so, $k^2\la^2/4\le\|\vec v\|_E^2/2\le d_Q(\vec u)^2/2<k^2\la^2$. 
\end{proof}

\subsection{Ejection process} \label{ss:eject}

The following lemma is the key to extract the hyperbolic nature from our PDE. 
Here the linearized evolution of the eigenmode $\la$ plays the main role, and we 
specify that the solution is exiting rather than entering, by \eqref{exiting condition}. 

\begin{lem} \label{2nd exit}
There exists a constant $0<\de_X\le \de_E$ with the following property. Let $u(t)$ be a local solution of NLKG on an interval $[0,T]$ satisfying 
\EQ{\nn 
 R:=d_Q(\vec u(0)) \le \de_X, \pq E(\vec u)<J(Q)+R^2/2} 
and for some $t_0\in(0,T)$, 
\EQ{ \label{exiting condition}
 d_Q(\vec u(t)) \ge R \pq (0<\forall t<t_0).}
Then $d_Q(\vec u(t))$ increases monotonically until reaching $\de_X$, and meanwhile, 
\EQ{\nn 
  \pt d_Q(\vec u(t)) \simeq -\sg\la(t) \simeq -\sg\la_+(t) \simeq e^{kt}R, 
  \pr |\la_-(t)|+\|\vec \ga(t)\|_E \lec R+e^{2kt}R^2, 
  \pr \min_{s=0,2}\sg K_s(u(t)) \gec d_Q(\vec u(t)) - C_*d_Q(\vec u(0)),}
for either $\sg=+1$ or $\sg=-1$, where $C_*\ge 1$ is a constant. 
\end{lem}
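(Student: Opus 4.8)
The plan is to treat the nonlinear equation \eqref{eq:Lgamma} as a perturbation of the linearized hyperbolic--dispersive system, using a bootstrap (continuity) argument on the interval where $d_Q(\vec u(t))$ stays below $\de_X$. First I would set up the ODE for the unstable/stable modes $\la_\pm = \la \pm \dot\la/k$: from $\ddot\la - k^2\la = N_\r(v)$ one gets $\dot\la_\pm = \pm k\la_\pm \pm N_\r(v)/k$. Since $N_\r(v)$ is quadratic and higher in $v$, while $\|\vec v\|_E \simeq d_Q(\vec u) =: d(t)$, one has $|N_\r(v)| \lec d(t)^2$ as long as $d$ stays small. The exiting hypothesis \eqref{exiting condition} together with Lemma~\ref{1st exit} (which applies because $E(\vec u) < J(Q) + R^2/2 \le J(Q) + d(t)^2/2$ throughout, the energy condition being monotone in $d$ once $d\ge R$) forces $d(t) \simeq |\la(t)|$ on the whole interval, so $\la$ keeps a fixed sign $-\sg$; I would fix $\sg$ so that $-\sg\la(0_+) > 0$.

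Next, the core estimates. I would run a bootstrap with hypotheses $-\sg\la_+(t) \le 2C_0 e^{kt}R$ and $|\la_-(t)| + \|\vec\ga(t)\|_E \le C_1(R + e^{2kt}R^2)$ on a maximal subinterval, and recover them with smaller constants. For $\la_+$: Duhamel gives $\la_+(t) = e^{kt}\la_+(0) + \tfrac1k\int_0^t e^{k(t-s)}\sg'\cdots$; plugging $|N_\r| \lec d(s)^2 \lec (e^{ks}R)^2$ yields the extra term $\lec e^{2kt}R^2$, negligible against $e^{kt}R$ once $\de_X$ (hence $R$) is small, giving $-\sg\la_+(t) \simeq e^{kt}R$ and in particular $-\sg\la_+$ stays positive. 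For $\la_-$: its linear evolution $e^{-kt}\la_-(0)$ is bounded by $R$, and the Duhamel term with $e^{-k(t-s)}$ integrated against $(e^{ks}R)^2$ gives $\lec e^{2kt}R^2$ (the growing part dominating the integral). For $\vec\ga$: apply the Strichartz estimate for $e^{it\om}$ from Proposition~\ref{prop:mfd}'s proof together with energy bounds, noting $\|\vec\ga(0)\|_E \lec R$ and $\|N_c(v)\|_{L^1_tL^2}$ controlled by $\int d(s)^2\,ds$. Since $\la_+$ dominates $\la_-$ and $\vec\ga$, we get $d(t) \simeq |\la| \simeq -\sg\la \simeq -\sg\la_+ \simeq e^{kt}R$. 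Monotonicity of $d$ (until it hits $\de_X$) then follows because $d(t)^2 = E(\vec u) - J(Q) + k^2\la^2$ by \eqref{energy dist}, and $k^2\la^2$ is increasing since $-\sg\la \simeq e^{kt}R$ is increasing and the lower-order corrections are controlled; the constant $\de_X$ is chosen small enough that all the ``$\simeq$'' and bootstrap closures hold with room to spare and so that $d(t)$ cannot exit $[R,\de_E]$ except by increasing past $\de_X$.

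Finally, the $K_s$ lower bound. Using the expansions \eqref{exp K0} (and its $K_2$ analogue), $K_s(Q+v) = \langle K_s'(Q)|v\rangle + \tfrac12\langle K_s''(Q)v|v\rangle + O(\|v\|_{H^1}^3)$; the crucial point is that the linear term $\langle K_s'(Q)|v\rangle$ is, up to a positive constant and lower-order terms, proportional to the $\r$-component $\la$, because $K_s'(Q)$ has nonzero pairing with $\r$ (concretely $\langle K_0'(Q)|v\rangle = -2\langle Q^3|v\rangle$, and $\langle Q^3|\r\rangle \ne 0$ since both are positive). Hence $\sg K_s(u(t)) \gec -\sg\la(t) - O(\|\vec v\|_E^2) - (\text{contribution of }\ga\text{ and }\la_-)$. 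Writing $-\sg\la \simeq d(t)$ and bounding the error terms by $d(t)^2 + |\la_-| + \|\vec\ga\|_E \lec d(t)^2 + R \lec d(t)^2 + R$, and absorbing $d(t)^2$ into $d(t)$ when $\de_X$ is small, gives $\min_{s}\sg K_s(u(t)) \gec d(t) - C_* R = d_Q(\vec u(t)) - C_* d_Q(\vec u(0))$ for a suitable $C_* \ge 1$.

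The main obstacle I anticipate is \emph{bookkeeping the interaction between the different modes and the PDE dispersive part in the bootstrap}: one must verify that the dispersive component $\vec\ga$, controlled only in Strichartz norms over the whole half-line in Proposition~\ref{prop:mfd}, is here controlled pointwise in time on a finite interval with a bound $\lec R + e^{2kt}R^2$ that is genuinely subordinate to $e^{kt}R$, and that this remains true up to the exit time where $d = \de_X$ is no longer infinitesimal — so the smallness is only relative ($R \ll \de_X$, not $\de_X \ll 1$ in an absolute sense inside the estimates). Getting the hierarchy of constants $C_0, C_1, C_*$ and the threshold $\de_X$ to close consistently, while keeping the sign of $-\sg\la_+$ from ever vanishing, is the delicate part; the rest is routine Gronwall/Duhamel and Strichartz estimation.
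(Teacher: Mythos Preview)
Your overall strategy is reasonable, but there is a genuine gap at the initialization of the bootstrap, and your monotonicity argument does not go through as stated. The missing ingredient is the \emph{convexity} of $d_Q(\vec u(t))^2$. Since $d_Q(\vec u)^2=E(\vec u)-J(Q)+k^2\la^2$ in the regime $d_Q\le\de_E$, one has $\p_t d_Q^2 = 2k^2\la\dot\la$ and $\p_t^2 d_Q^2 = 2k^2\dot\la^2 + 2k^4\la^2 + 2k^2\la N_\r(v)$; because $|N_\r(v)|\lec \|v\|_{H^1}^2$ and $d_Q\simeq|\la|$, the last term is cubic and negligible, so $\p_t^2 d_Q^2 \simeq d_Q^2 > 0$. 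The exiting condition then gives $\p_t d_Q^2|_{t=0}\ge 0$, and convexity yields the strict monotone increase of $d_Q$ directly. More importantly, $\la\dot\la=\p_t d_Q^2/(2k^2)\ge 0$ gives $\la_+^2-\la_-^2 = 4\la\dot\la/k \ge 0$, whence $|\la_+|\ge|\la_-|$ and therefore $\la_+\simeq\la$ with the same sign. This is exactly what you need to launch the Duhamel estimate on $\la_+$: without it, $\la_+(0)$ could in principle be $o(R)$ (with $\la_-(0)$ carrying all the weight), and then $e^{kt}\la_+(0)$ is not $\simeq e^{kt}R$; your bootstrap never gets off the ground. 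Your \emph{a posteriori} derivation of monotonicity from $|\la|\simeq e^{kt}R$ is also invalid, since ``$\simeq$'' does not imply monotone.

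A second, more structural point: the paper bounds $\vec\ga$ \emph{without} Strichartz estimates, hence without invoking the spectral gap~\eqref{eq:gap}. Instead it uses an energy identity for the $\la$-equation alone, namely $\p_t\bigl[-k^2\la^2/2+\dot\la^2/2-C(\la\r)\bigr]=(N_\r(v)-N_\r(\la\r))\dot\la=O(\|\ga\|_{H^1}|\la|^2)$, and subtracts this from the full conserved energy~\eqref{energy exp} to isolate $\|\vec\ga\|_E^2-C(v)+C(\la\r)$, which one then integrates. Your Strichartz route would close under~\eqref{eq:gap}, but the paper explicitly remarks after this lemma that the proof uses neither the equation for $\ga$ nor any dispersive property of the linearized flow. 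This robustness matters: Theorems~\ref{thm:main} and~\ref{Lachesis} are asserted without~\eqref{eq:gap} and for general $H^1$-subcritical powers where the gap can fail, so your approach would unnecessarily restrict the scope of the lemma.
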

\begin{proof}
Lemma \ref{1st exit} yields $d_Q(\vec u)\simeq|\la|$ as long as $R\le d_Q(\vec u)\le \de_E$, 
whereas the energy conservation of NLKG and the equation of $\la$ give as long as $d_Q(\vec u)\le \de_E$,  see~\eqref{def Enorm}, 
\EQ{
 \pt \p_t d_Q(\vec u)^2=2k^2\la\dot\la, 
 \pq \p_t^2 d_Q(\vec u)^2=2k^2|\dot\la|^2+2k^4|\la|^2+2k^2\la N_\r(v).}
The exiting condition \eqref{exiting condition} implies $\p_td_Q(\vec u)^2|_{t=0}\ge 0$. 
Since $N_\r(v)\lec\|v\|_{H^1}^2$, we have $\p_t^2d_Q(\vec u)^2\simeq d_Q(\vec u)^2$ as long as $d_Q(\vec u)\simeq|\la|\ll 1$. 

Hence, imposing $\de_X\le \de_E$ and small enough, we deduce that 
$d_Q(\vec u)\ge R$ strictly increases until it reaches $\de_X$; meanwhile, $d_Q(\vec u)\simeq\sg\la$ for $\sg\in\{\pm 1\}$ fixed. 
Since 
\EQ{
 \la_+^2-\la_-^2=4\la\dot\la/k\ge 0,} 
we also infer that $\la_+\simeq\la$. 

Next, integrating the equation \eqref{eq:Lgamma} for $\la$ yields
\EQ{
 |\vec\la(t)-\vec\la_0(t)| \lec \int_0^t e^{k(t-s)}|N_\r(v(s))|ds \lec \int_0^t e^{k(t-s)}|\la(s)|^2ds,}
where $\la_0$ denotes the linearized solution, which satisfies 
\EQ{
 \pt |\la_0(t)|=|e^{kt}\la_+(0)+e^{-kt}\la_-(0)|/2 \simeq Re^{kt}.}
Then by continuity in time we deduce 
\EQ{
 \la(t) \simeq -\sg Re^{kt}, \pq |\vec\la(t)-\vec\la_0(t)| \lec R^2e^{2kt},}
as long as $Re^{kt}\ll 1$. This yields the upper bounds on $\la_\pm$ as well. 

To bound the remainder $\ga$, we use the energy for the $\la$ equation, see \eqref{energy exp}, 
\EQ{
 |\p_t[-k^2\la/2+\dot\la^2/2-C(\la\r)]|=|(N_\r(v)-N_\r(\la\r))\dot\la|
 \lec \|\ga\|_{H^1}|\la|^2.}
Subtracting it from the energy \eqref{energy exp} yields
\EQ{
 |\p_t[\|\vec\ga\|_E^2-C(v)+C(\la\r)]| \lec \|\ga\|_{H^1}|\la|^2.}
Integrating this bound and using the bound on $\la$ and $\vec\ga(0)$, one obtains
\EQ{
 \|\vec\ga\|_{L^\I_tE(0,T)}^2 \lec R^2+\|\vec\ga\|_{L^\I_tE(0,T)}R^2e^{2kT},}
which implies the desired bound on $\ga$.  

Finally, recall from \eqref{exp K0}  that
\EQ{
 K_0(u)=-k^2\la\LR{Q|\r}-\LR{2Q^3|\ga}+O(\|v\|_{H^1}^2),}
and similarly we can expand $K_2$ around $Q$:
\EQ{ \label{exp K2}
 K_2(u)
 =-(k^2/2+2)\la\LR{Q|\r}-\LR{2Q+Q^3|\ga}+O(\|v\|_{H^1}^2).}
Since $\LR{Q|\r}>0$ by their positivity, we obtain the desired bound on $K_s$. 
\end{proof}
Note that the above proof did not use the equation for $\ga$, see~\eqref{eq:Lgamma}. 
Although we do have the full Strichartz estimate for the linearized evolution of $\ga$ at our disposal,
 thanks to the spectral gap condition \eqref{eq:gap}, the above proof does not require 
 any dispersive nature of $\ga$, and so is applicable to more general cases. 
This is indeed natural since we are dealing with that part of dynamics dominated by the hyperbolicity in~$\la$. 

\subsection{Variational lower bounds}
Now we turn to the solutions away from $\pm Q$. The variational estimate is derived as an extension of Lemma~\ref{sign K below Q}. This is essentially a static statement, where $\vec u=(u,\dot u)$ should be simply regarded as a point in the phase space $\HH$. 
\begin{lem} \label{K lower bd}
For any $\de>0$, there exist $\e_0(\de), \ka_0, \ka_1(\de)>0$ such that for any $\vec u\in\HH$ satisfying 
\EQ{ \label{energy region}
 E(\vec u)< J(Q)+\e_0(\de)^2, \pq d_Q(\vec u) \ge \de,}
one has either
\EQ{ \label{-K bd}
 K_0(u) \le -\ka_1(\de) \pq and \pq K_2(u) \le -\ka_1(\de),}
or 
\EQ{ \label{+K bd}
 K_0(u) \ge \min(\ka_1(\de), \ka_0\|u\|_{H^1}^2) \pq and \pq K_2(u) \ge \min(\ka_1(\de),\ka_0\|\na u\|_{L^2}^2).}
\end{lem}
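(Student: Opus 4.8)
\textbf{Proof proposal for Lemma~\ref{K lower bd}.}
The plan is to argue by contradiction and compactness, following the pattern of Lemma~\ref{sign K below Q} but quantifying it. Fix $\de>0$. Suppose no such $\e_0,\ka_1$ exist; then there is a sequence $\vec u_n\in\HH$ with $E(\vec u_n)<J(Q)+1/n$ and $d_Q(\vec u_n)\ge\de$, but for which \emph{both} dichotomy alternatives fail, i.e. for each $n$ one of $K_0(u_n),K_2(u_n)$ lies in $(-1/n,\ \min(1/n,\ka_0\|\cdot\|^2))$ — more precisely, one of the following holds: $K_0(u_n)>-1/n$ yet $K_0(u_n)<\min(1/n,\ka_0\|u_n\|_{H^1}^2)$, or the analogous statement with $K_2$ and $\|\na u_n\|_{L^2}^2$. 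First I would dispose of the large-data regime: when $\|u_n\|_{H^1}$ is large, the Gagliardo--Nirenberg inequality \eqref{GN} forces $K_0(u)\ge\|u\|_{H^1}^2-C\|u\|_{H^1}^{\,}\|\na u\|_{L^2}^3/\|u\|_{H^1}^{?}$... more cleanly: for $\|u\|_{H^1}$ small, $K_0(u)\ge\tfrac12\|u\|_{H^1}^2$ by \eqref{GN}, which already places us in the positive alternative with a margin; so we may assume $\|u_n\|_{H^1}$ stays bounded below, and I would also show it stays bounded above (if $\|u_n\|_{H^1}\to\I$ the energy constraint $E(\vec u_n)<J(Q)+1/n$ combined with $E(\vec u)\ge G_0(u)-\tfrac14 K_0(u)-\tfrac12\|\dot u\|_2^2$-type identities forces $K_0(u_n)\to-\I$, again contradicting the failure of the dichotomy). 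Hence along a subsequence $\|u_n\|_{H^1}\simeq 1$, $\|\dot u_n\|_{L^2}$ bounded, and by radial compactness $u_n\to u_\I$ strongly in $L^4$ and weakly in $H^1$, with $\dot u_n\rightharpoonup \dot u_\I$ weakly in $L^2$.

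Next I would identify the limit. Passing to the limit in the failure conditions gives that $u_\I$ satisfies, say (after relabelling which of $s=0,2$ occurs infinitely often and passing to a further subsequence), $K_s(u_\I)\le 0$ and $K_s(u_\I)\ge 0$, hence $K_s(u_\I)=0$; and the $L^4$ convergence plus weak lower semicontinuity of the $H^1$-norm gives $J(u_\I)\le \liminf G_s(u_n)\le \liminf(E(\vec u_n))\le J(Q)$ for $s=0$, and a parallel inequality for $s=2$ using that $G_2$ is also controlled by the energy when $\|\dot u_n\|_2$ is small — here I need to be a little careful, since the energy controls $G_s$ only up to the kinetic term $\tfrac12\|\dot u_n\|_{L^2}^2$, so strictly I should run the contradiction in two cases according to whether $\|\dot u_n\|_{L^2}\to 0$ or not. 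If $\|\dot u_n\|_{L^2}\not\to0$ one still gets $J(u_\I)<J(Q)$ strictly, so by Lemma~\ref{minimization} $u_\I=0$; but then $L^4$ convergence and $K_s(u_n)\to 0$ force $u_n\to 0$ in $H^1$ (again via \eqref{GN}, exactly as in the proof of Lemma~\ref{minimization}), contradicting $\|u_n\|_{H^1}\simeq1$. If $\|\dot u_n\|_{L^2}\to 0$, then $J(u_\I)\le J(Q)$ and $K_s(u_\I)=0$, so by Lemma~\ref{minimization} either $u_\I=0$ (excluded as before) or $u_\I=\pm Q$.

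The remaining, and I expect main, obstacle is to rule out $u_\I=\pm Q$ using the constraint $d_Q(\vec u_n)\ge\de$. In the case $\|\dot u_n\|_{L^2}\to0$ and $u_n\to Q$ we would have $J(u_n)\to J(Q)$ with $K_s(u_n)\to 0$, and I would upgrade the weak convergence to strong: since $K_s(u_n)\to0$ and $J(u_n)\to J(Q)=\inf\{G_s : K_s\le0\}$ forces $G_s(u_n)\to G_s(Q)$ — indeed $J(u_n)=G_s(u_n)+\tfrac{1}{4\text{ or }3}K_s(u_n)$ up to the scaling normalization, so $G_s(u_n)\to J(Q)=G_s(Q)$ — and $G_s$ is (equivalent to) a squared Hilbert norm, weak convergence plus norm convergence gives $u_n\to Q$ strongly in $H^1$; together with $\|\dot u_n\|_{L^2}\to0$ this gives $\vec u_n\to\vec Q$ in $\HH$, hence $d_Q(\vec u_n)\to 0$, contradicting $d_Q(\vec u_n)\ge\de$. (The same works for $u_\I=-Q$.) This closes the contradiction and yields $\e_0(\de),\ka_1(\de)>0$; the universal constant $\ka_0$ comes out of \eqref{GN} in the small-norm branch and is independent of $\de$. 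The one genuinely delicate point worth spelling out in the full proof is the bookkeeping that makes the dichotomy \emph{simultaneous} in $s=0$ and $s=2$: one must invoke Lemma~\ref{sign K below Q} (or re-derive its topological argument) to know that in the region $E(\vec u)<J(Q)+\e_0^2$ the signs of $K_0$ and $K_2$, once bounded away from $0$, must agree — this is what lets us pass from ``the $s$ that fails infinitely often'' back to a statement about both indices, and it is also what forces the sign in \eqref{-K bd} versus \eqref{+K bd} to be consistent across $s=0,2$.
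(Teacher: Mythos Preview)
Your compactness argument for the dichotomy at each fixed $s\in\{0,2\}$ is essentially the paper's, and correct. Two minor remarks: first, the claim ``$K_s(u_\I)\ge 0$'' does not follow from weak convergence (the quadratic part of $K_s$ is only weakly lower semicontinuous), but you only need $K_s(u_\I)\le 0$ to invoke Lemma~\ref{minimization}, so this slip is harmless. Second, the case split on whether $\|\dot u_n\|_{L^2}\to 0$ is unnecessary. The paper argues more directly: once $u_n\to\pm Q$ strongly in $H^1$, the constraint $d_Q(\vec u_n)\ge\de$ forces $\liminf_n\|\dot u_n\|_{L^2}^2\gtrsim\de^2$, while $E(\vec u_n)<J(Q)+\e_0^2$ together with $J(u_n)\to J(Q)$ forces $\limsup_n\|\dot u_n\|_{L^2}^2\le 2\e_0^2$; choosing $\e_0(\de)<\de/2$ closes the contradiction in one line.

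The part you correctly flag as delicate --- that the signs of $K_0$ and $K_2$ agree --- is where the genuine gap lies. You cannot simply cite Lemma~\ref{sign K below Q}: that lemma contracts the region $\{J(\fy)<J(Q)\}$ to $0$ via the scalings $\fy\mapsto\fy_s^\nu$, but here the relevant set is $\ti\K_s^+:=\{E(\vec u)<J(Q)+\e_0^2,\ d_Q(\vec u)>\de,\ K_s(u)\ge 0\}$, which contains points in the shell $\de<d_Q(\vec u)\le 2\de$ near $\pm Q$ where $J(u)\ge J(Q)$ and the scaling contraction by itself need not stay inside $\ti\K_s^+$ (nor respect the constraint $d_Q>\de$). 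The paper supplies the missing deformation: for $\vec u$ in that shell, \emph{increase} $|\la|=|\LR{u\mp Q|\r}|$ while holding $\ga$ and $\dot u$ fixed. Since $\r$ is the negative-eigenvalue direction of $L_+$, this simultaneously lowers $E(\vec u)$ and raises $d_Q(\vec u)$, eventually driving $E(\vec u)$ strictly below $J(Q)$ without leaving $\ti\K_s^+$; only then does the scaling contraction from Lemma~\ref{sign K below Q} apply to reach~$0$. This two-stage deformation (hyperbolic direction first, then scaling) is the idea your sketch is missing; without it the connectedness of $\ti\K_s^+$, and hence the simultaneity of \eqref{-K bd} and \eqref{+K bd} across $s=0,2$, is not established.
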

\begin{proof}
$\kappa_{0}$ is an absolute constant that will be determined via the constant in~\eqref{GN}. 
First we prove the conclusion separately for $s=0$ and $s=2$ by contradiction. Fix $s=0$ or $s=2$ and $\de>0$, and 
suppose that there exists a sequence $\vec u_n\in \HH$ satisfying \eqref{energy region} with $\e_{0}=1/n$ but neither \eqref{-K bd} 
nor \eqref{+K bd} with $\ka_1=1/n$. Since $K_s(u_n)$ is bounded, the definition \eqref{def G} implies that $G_s(u_n)\simeq\|u_n\|_{H^1}^2$ is also bounded, and so $K_s(u_n)\to 0$. Then by the same argument as in Lemma~\ref{minimization}, we deduce that $u_n$ converges, after extraction of a subsequence, strongly to $0$ or $\pm Q$. In the latter case, \eqref{energy region} implies that 
\EQ{\nn 
 \de^2 \le \liminf_{n\to\I} \|\dot u_n\|_{L^2}^2 \le 2\e_0^2,}
which is precluded by choosing $\e_0(\de)<\de/2$. If $u_n\to 0$, then \eqref{GN} implies that the quadratic part dominates in $K_s(u_n)$ for large $n$, so \eqref{+K bd} holds for some $\ka_0>0$ independently of $\de$. Thus we obtain \eqref{-K bd} or \eqref{+K bd}, separately for $s=0$ and for $s=2$. 

It remains to show that they have the same sign. First note that 
\EQ{\nn 
 \pt \ti\K_s^+:=\{\vec u\in\HH\mid E(\vec u)<J(Q)+\e_0^2,\ d_Q(\vec u)>\de,\ K_s(u)\ge 0\},
 \pr \ti\K_s^-:=\{\vec u\in\HH\mid E(\vec u)<J(Q)+\e_0^2,\ d_Q(\vec u)>\de,\ K_s(u)< 0\},}
for $s=0,2$ are open sets satisfying 
\EQ{\nn
 \ti\K_s^+\cap\ti\K_s^-=\emptyset, \pq \ti\K_0^+ \cup \ti\K_0^- = \ti\K_2^+\cup \ti\K_2^-.}
Since $\ti K_0^+$ and $\ti K_2^+$ have the point $0$ in common, it suffices to show that both are connected. 
For that purpose, we use two kinds of deformations in $\ti\K_s^+$. Fix $s=0,2$ and take any $u\in \ti\K_s^+$ satisfying $\de<d_Q(\vec u)\le 2\de\ll \de_E$. Recall the expansion 
\EQ{\nn 
 \pt 2d_Q(\vec u)^2=k^2\la^2+\LR{L_+\ga|\ga}+\|\dot v\|_{L^2}^2-2C(v),
 \pr 2[E(\vec u)-J(Q)]=-k^2\la^2+\LR{L_+\ga|\ga}+\|\dot v\|_{L^2}^2-2C(v).}
Lemma \ref{1st exit} implies $|\la|\simeq d_Q(\vec u)$ provided that we choose $\e_0(\de)^2<\de^2/2$. 
We deform $\vec u$ by increasing $|\la|$, while fixing $\ga$ and $\dot u$. Then $E(\vec u)$ decreases and $d_Q(\vec u)$ increases, as long as $d_Q(\vec u)\ll \de_E$. Meanwhile, $\vec u$ remains in $\ti\K_s^+$ and eventually 
\EQ{\nn
 E(\vec u)-J(Q)\lec -d_Q(\vec u)^2 + O(\de^2) + o(d_Q(u)^2) \ll -\de^2.}
Thus we can deform $\ti\K_s^+\cap\{d_Q(\vec u)\le 2\de\}$ into 
\EQ{\nn 
 \{\vec u \in \HH\mid E(\vec u)-J(Q)\ll-\de^2, K_s(u)\ge 0\}\subset\ti\K_s^+,}
which is contracted to $\{0\}$ by the scaling transform as in Lemma \ref{sign K below Q}
\EQ{\nn 
 \vec u=(u,\dot u) \mapsto (u_s^\nu,\nu\dot u) \pq(\nu:1\to+0).}
For the remaining part of $\ti\K_s^+$ we also use this scaling transform, until either reaching $0$, or hitting the sphere $d_Q(\vec u)=2\de$, where it is reduced to the previous case. Thus we conclude that $\ti\K_s^+$ for both $s$ are connected and coincide. 
\end{proof}

\subsection{Sign function away from the ground states}
The above two lemmas enable us to define a sign functional away from $\pm Q$ by combining those of $-\la$ and $K_s$. 
\begin{lem} \label{sign}
Let $\de_S:=\de_X/(2C_*)>0$ where $\de_X$ and $C_*\ge 1$ are constants from Lemma \ref{2nd exit}. Let $0<\de\le\de_S$ and 
\EQ{ \label{def HS}
 \HH_{(\de)}:=\{\vec u\in\HH \mid E(\vec u)<J(Q)+\min(d_Q(\vec u)^2/2,\e_0(\de)^2)\},} 
where $\e_0(\de)$ is given by Lemma \ref{K lower bd}. Then there exists a unique continuous function $\Sg:\HH_{(\de)}\to\{\pm 1\}$ satisfying 
\EQ{ \label{def Sg}
 \CAS{\vec u\in\HH_{(\de)},\ d_Q(\vec u)\le\de_E &\implies  \Sg(\vec u)=-\sign\la,\\
 \vec u\in\HH_{(\de)},\ d_Q(\vec u)\ge\de &\implies \Sg(\vec u)=\sign K_0(u)=\sign K_2(u),}}
where we set $\sign 0=+1$ (a convention for the case $u=0$). 
\end{lem}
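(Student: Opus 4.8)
The plan is to construct $\Sg$ by gluing together the two sign functions supplied by Lemmas~\ref{1st exit} and~\ref{K lower bd}, and then to verify that the gluing is consistent on the overlap region $\de\le d_Q(\vec u)\le\de_E$. Concretely, on the ``near'' part $\{d_Q(\vec u)\le\de_E\}$ we would set $\Sg(\vec u):=-\sign\la$, which makes sense because in $\HH_{(\de)}$ the hypothesis $E(\vec u)<J(Q)+d_Q(\vec u)^2/2$ together with $d_Q(\vec u)\le\de_E$ puts us exactly in the setting of Lemma~\ref{1st exit}, so $d_Q(\vec u)\simeq|\la|$ and in particular $\la\neq0$ there; hence $-\sign\la$ is continuous on this set. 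On the ``far'' part $\{d_Q(\vec u)\ge\de\}$, since $\de\le\de_S<\e_0(\de)$-scale and $E(\vec u)<J(Q)+\e_0(\de)^2$, Lemma~\ref{K lower bd} applies and gives $K_0(u)$ and $K_2(u)$ both bounded away from $0$ with the \emph{same} sign, so $\Sg(\vec u):=\sign K_0(u)=\sign K_2(u)$ is well defined and continuous. The two prescriptions together cover $\HH_{(\de)}$ because $\de\le\de_E$.

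The key step is then to check that the two definitions agree on the overlap $\de\le d_Q(\vec u)\le\de_E$ (inside $\HH_{(\de)}$), so that the glued function is single-valued and continuous. On this overlap both $-\sign\la$ and $\sign K_0(u)$ are locally constant (by the two lemmas), so it suffices to match them on each connected component of the overlap; by Lemma~\ref{1st exit} each such component has a fixed sign of $\la$. The natural way to pin down the relation is to use the expansion from Lemma~\ref{2nd exit}, namely
\[
 K_0(u)=-k^2\la\LR{Q|\r}-\LR{2Q^3|\ga}+O(\|v\|_{H^1}^2),
\]
together with $\LR{Q|\r}>0$ and $d_Q(\vec u)\simeq|\la|\gg\|\vec\ga\|_E+\|v\|_{H^1}^2$ in the regime $d_Q(\vec u)\le\de_E$ with $\de_E$ small: this forces $\sign K_0(u)=-\sign\la$, i.e.\ exactly the compatibility we need. (One should double-check that the energy constraint $E(\vec u)<J(Q)+d_Q(\vec u)^2/2$ is what guarantees $\|\vec\ga\|_E$ is small compared to $|\la|$, via $d_Q(\vec u)^2=E(\vec u)-J(Q)+k^2\la^2\le d_Q(\vec u)^2/2+k^2\la^2$ as in the proof of Lemma~\ref{1st exit}.) Uniqueness of $\Sg$ is immediate: any continuous $\{\pm1\}$-valued function satisfying the two prescriptions is forced on a dense (in fact all) of $\HH_{(\de)}$, since the two sets $\{d_Q\le\de_E\}$ and $\{d_Q\ge\de\}$ already cover it and determine the value there.

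The main obstacle I expect is purely the bookkeeping of constants: one must verify that $\de_S=\de_X/(2C_*)$ is small enough that $\e_0(\de)$ from Lemma~\ref{K lower bd} (which depends on $\de$) is still compatible with the energy window $E(\vec u)<J(Q)+\min(d_Q(\vec u)^2/2,\e_0(\de)^2)$ used in the definition of $\HH_{(\de)}$, and that $\de\le\de_S\le\de_X\le\de_E$, so that wherever we invoke Lemma~\ref{1st exit} we genuinely have $d_Q(\vec u)\le\de_E$ and wherever we invoke Lemma~\ref{K lower bd} we genuinely have $E(\vec u)<J(Q)+\e_0(\de)^2$. There is also a minor point about connectedness: strictly speaking one needs that each connected component of $\HH_{(\de)}\cap\{d_Q(\vec u)\le\de_E\}$ meets $\{d_Q(\vec u)\le\de\}$ or carries a fixed sign of $\la$ throughout — but this is exactly the content of Lemma~\ref{1st exit}'s last sentence, so no extra work is needed. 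Everything else is a direct assembly of the two preceding lemmas.
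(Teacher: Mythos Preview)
Your gluing scheme and the identification of the two source lemmas are exactly right, and the continuity/uniqueness parts are fine. The gap is in the compatibility check on the overlap $\de\le d_Q(\vec u)\le\de_E$: the claim $|\la|\gg\|\vec\ga\|_E$ is false. From $d_Q(\vec u)^2=E(\vec u)-J(Q)+k^2\la^2< d_Q(\vec u)^2/2+k^2\la^2$ you get only $d_Q(\vec u)^2<2k^2\la^2$, hence $\|\vec\ga\|_E^2\le 2d_Q(\vec u)^2\lesssim\la^2$; that is, $\|\vec\ga\|_E$ is merely \emph{comparable} to~$|\la|$, not small. Then in the expansion $K_0(u)=-k^2\la\LR{Q|\r}-2\LR{Q^3|\ga}+O(\|v\|_{H^1}^2)$ the linear term in~$\ga$ can be of the same order as the $\la$~term, and nothing in the energy constraint rules out a sign flip. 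So the static expansion does not by itself force $\sign K_0(u)=-\sign\la$ at a general point of the overlap; you would need a numerical comparison of $k^2\LR{Q|\r}$ against $\|Q^3\|_{L^2}$ (times the implicit constant in $\|\ga\|\lesssim|\la|$), which you have not done and which the paper never relies on.

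The paper closes this gap \emph{dynamically}. From a point with $d_Q(\vec u(0))=\de_S$ in $\HH_{(\de)}$ it runs the NLKG flow and invokes the ejection Lemma~\ref{2nd exit}: along the trajectory $\sign\la(t)$ is constant, while the conclusion $\sg K_s(u(t))\gtrsim d_Q(\vec u(t))-C_*d_Q(\vec u(0))$ forces $\sg K_s>0$ by the time $d_Q$ reaches $\de_X$, precisely because $C_*\de_S=\de_X/2<\de_X$. Since $\sign K_s$ is already constant on $\{d_Q\ge\de\}$ by Lemma~\ref{K lower bd}, the match at the terminal time pulls back to $t=0$. In other words, the definition $\de_S:=\de_X/(2C_*)$ is the crux of the argument, not bookkeeping: it is what converts the dynamical inequality of Lemma~\ref{2nd exit} into the sign identity, bypassing any pointwise comparison of $|\la|$ and~$\|\ga\|$.
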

\begin{proof}
Lemma \ref{1st exit} implies that $\sign \la$ is continuous for $d_Q(\vec u)\le\de_E$, and Lemma \ref{K lower bd} implies that $\sign K_0(\vec u)=\sign K_2(\vec u)$ is continuous for $d_Q(\vec u)\ge\de$. Hence, it suffices to see that they coincide at $d_Q(\vec u)=\de_S\in[\de,\de_X]$ in $\HH_{(\de)}$. Let $u$ be a solution of NLKG with $\vec u(0)\in\HH_{(\de)}$ and $d_Q(\vec u(0))=\de_S$. Then Lemma \ref{2nd exit} implies that $\vec u(t)$ stays in $\HH_{(\de)}$ and $\sign\la(t)$ is constant, until $d_Q(\vec u(t))$ reaches $\de_X$, which is after $-\sign K_s(u(t))$ becomes the same as $\sign\la(t)$, because $2C_*\de_S\le\de_X$. Since $\sign K_s(\vec u)$ is constant for $d_Q(\vec u)\ge\de$, we conclude that $\sign\la(t)=-\sign K_s(u(t))$ from the beginning $t=0$. 
\end{proof}

The $\Sg=+1$ side is uniformly bounded in the energy, as the following lemma shows. 
 
\begin{lem} \label{Sg+ eng bd}
There exists $M_*\sim J(Q)^{1/2}$ such that for any $\vec u\in\HH_{(\de_S)}$ satisfying $\Sg(\vec u)=+1$ we have $\|\vec u\|_\HH \le M_*$. 
\end{lem}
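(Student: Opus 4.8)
The plan is to split the region $\HH_{(\de_S)}\cap\{\Sg=+1\}$ into the part close to $\pm Q$ and the part away from $\pm Q$, and obtain the bound $\|\vec u\|_\HH\le M_*$ separately in each, with $M_*\sim J(Q)^{1/2}$. First I would handle the region $d_Q(\vec u)\le\de_S$: here $\vec u$ lies within $O(\de_S)$ of $\pm Q$ in $\HH$, so $\|\vec u\|_\HH\le\|Q\|_\HH+O(\de_S)\lec J(Q)^{1/2}$ trivially, since $\|Q\|_\HH=\|Q\|_{H^1}\sim J(Q)^{1/2}$ (recall $G_0(Q)=\tfrac14\|Q\|_{H^1}^2$ and $G_0(Q)=J(Q)-K_0(Q)/4=J(Q)$ because $K_0(Q)=0$). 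So the content is in the region $d_Q(\vec u)\ge\de_S$, where by definition of $\Sg$ in Lemma~\ref{sign} we have $\Sg(\vec u)=\sign K_0(u)=\sign K_2(u)=+1$, i.e. $K_0(u)\ge0$ and $K_2(u)\ge0$.

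Next, I would exploit the positivity of $G_0$ together with $K_0(u)\ge0$ and the energy bound. Decompose the energy as
\EQ{\nn
 E(\vec u)=\tfrac12\|\dot u\|_{L^2}^2+J(u)=\tfrac12\|\dot u\|_{L^2}^2+G_0(u)+\tfrac14 K_0(u)\ge \tfrac12\|\dot u\|_{L^2}^2+\tfrac14\|u\|_{H^1}^2,}
using $G_0(u)=\tfrac14\|u\|_{H^1}^2$ and discarding the nonnegative term $\tfrac14 K_0(u)\ge0$. Since $\vec u\in\HH_{(\de_S)}$ forces $E(\vec u)<J(Q)+\e_0(\de_S)^2$, we obtain
\EQ{\nn
 \|\dot u\|_{L^2}^2+\tfrac12\|u\|_{H^1}^2\le 2E(\vec u)<2J(Q)+2\e_0(\de_S)^2,}
hence $\|\vec u\|_\HH^2=\|u\|_{H^1}^2+\|\dot u\|_{L^2}^2\le 4J(Q)+4\e_0(\de_S)^2$, which is $\lec J(Q)$ for $\e$ small (note $\e_0(\de_S)$ is a fixed constant once $\de_S$ is fixed, and can be absorbed since $J(Q)>0$). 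Taking $M_*:=(4J(Q)+4\e_0(\de_S)^2)^{1/2}\sim J(Q)^{1/2}$ then covers this region, and combining with the trivial bound near $\pm Q$ (enlarging $M_*$ by the $O(\de_S)$ and $\|Q\|_{H^1}$ contributions, still $\sim J(Q)^{1/2}$) finishes the proof.

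I do not expect a serious obstacle here; the lemma is essentially a bookkeeping consequence of the identity $J=G_0+K_0/4$ with $G_0$ coercive, the sign information $K_0\ge0$ packaged into $\Sg=+1$ by Lemma~\ref{sign}, and the one-sided energy constraint defining $\HH_{(\de_S)}$. The only mild subtlety is treating the transition zone $\de_S\le d_Q(\vec u)$ versus $d_Q(\vec u)\le\de_S$ cleanly: in the overlap one may use either estimate, and one just takes the larger of the two constants, both of which are $\sim J(Q)^{1/2}$. One should also double-check that the definition of $\HH_{(\de)}$ with $\min(d_Q(\vec u)^2/2,\e_0(\de)^2)$ does not weaken the bound — it does not, since $E(\vec u)<J(Q)+\e_0(\de_S)^2$ always holds on $\HH_{(\de_S)}$, which is all that is used.
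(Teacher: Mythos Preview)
Your proof is correct and follows essentially the same route as the paper: both split according to whether $d_Q(\vec u)\le\de_S$ or $d_Q(\vec u)\ge\de_S$, handle the near case trivially via $\|\vec u\|_\HH\le\|Q\|_{H^1}+\|\vec v\|_\HH$, and in the far case use $K_0(u)\ge 0$ together with the identity $J=G_0+K_0/4$ (equivalently $\|u\|_{H^1}^2+2\|\dot u\|_{L^2}^2=4E(\vec u)-K_0(u)$) and the energy constraint $E(\vec u)<J(Q)+\e_0(\de_S)^2$ to get $\|\vec u\|_\HH^2\le 4E(\vec u)\lec J(Q)$. Your additional remarks justifying $\|Q\|_{H^1}^2=4J(Q)$ and the absorption of $\e_0(\de_S)^2$ are accurate and make the argument slightly more explicit than the paper's version.
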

\begin{proof}
If $d_Q(\vec u)\le\de_S$, then 
\EQ{
 \|\vec u\|_\HH \le \|Q\|_{H^1}+\|\vec v\|_\HH \lec J(Q)^{1/2}+d_Q(\vec u)^{1/2} \lec J(Q)^{1/2}.}
If $d_Q(\vec u)\ge\de_S$, then $K_0(u)\ge 0$ together with \eqref{def G} implies that 
\EQ{
 \|\vec u\|_\HH^2 = 4E(\vec u)-K_0(u) \le 4E(\vec u) \le 4[J(Q)+\e_0(\de_S)^2] \lec J(Q)}
 as desired. \end{proof}

\subsection{Vanishing kinetic energy leads to scattering}
Finally, we encounter the following problem in using the localized virial identity \eqref{loc virial*}: in the regime $K_2(u(t))\ge0$ this functional can become arbitrarily small  around $0$ for the $u$ component of $\HH$. 
That is, $K_2(u(t))$ can vanish at some time if (and only if) $\|\na u(t)\|_{L^2_x}$ does so, see Lemma \ref{K lower bd}. 
Notice that we should treat this kind of vanishing only in the time averaged sense. The idea is then that all frequencies have to shift to $0$, which leads to the scattering in both directions by the small Strichartz norm of subcritical regularity. 

\begin{lem} \label{0freq scat}
For any $M>0$, there exists $\mu_0(M)>0$ with the following property. Let $u(t)$ be a finite energy solution of NLKG \eqref{eq:NLKG} on $[0,2]$ satisfying 
\EQ{ \label{low freq conc} 
 \|\vec u\|_{L^\I_t(0,2;\HH)} \le M, \pq 
 \int_0^2 \|\na u(t)\|_{L^2}^2 \, dt \le \mu^2}
for some $\mu\in(0,\mu_0]$. Then $u$ extends to a global solution and scatters to $0$ as $t\to\pm\I$, and moreover $\|u(t)\|_{L^3_tL^6_x(\R\times\R^3)}\ll\mu^{1/6}$. 
\end{lem}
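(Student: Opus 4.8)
The plan is to combine the uniform energy bound with the smallness of the time-averaged gradient to produce a short interval on which the Strichartz norm of $u$ is small, and then run the standard subcritical small-data scattering/bootstrap argument. First I would use the mean value theorem (really the pigeonhole/Chebyshev argument applied to $\int_0^2\|\nabla u(t)\|_{L^2}^2\,dt\le\mu^2$) to find a time $t_*\in[1/2,3/2]$, or more robustly a subinterval of fixed length, on which $\|\nabla u(t_*)\|_{L^2}\lesssim\mu$. Together with the conservation (or near-conservation, since we only have the interval $[0,2]$) of the $L^2$ mass coming from the energy bound $\|\vec u\|_{L^\infty_t\HH}\le M$, this says $\|u(t_*)\|_{H^1}$ is still only of size $M$ in $L^2$ but small in $\dot H^1$; the point is that the nonlinearity $u^3$ in $\R^3$ is energy-subcritical, so the relevant Strichartz space $L^3_tL^6_x$ (or $L^2_tL^6_x\cap L^\infty_tH^1_x$ as used in Section~3) interpolates between $L^2$ and $\dot H^1$ in a way that makes the $\dot H^1$-smallness of the data the decisive factor once we pay a harmless power of $M$.

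Next I would set up the contraction/bootstrap on the free Klein--Gordon Strichartz estimate
\EQ{\nn
 \|u\|_{St(J)} \lesssim \|\vec u(t_*)\|_{\HH} + \|u^3\|_{L^1_tL^2_x(J)},
}
exactly as in \eqref{eq:strichKG1}, where $St=L^2_tL^6_x\cap L^\infty_tH^1_x$ and $J$ is a time interval containing $t_*$. Estimating the nonlinearity by $\|u^3\|_{L^1_tL^2_x}\lesssim\|u\|_{L^2_tL^6_x}^2\|u\|_{L^\infty_tL^6_x}\lesssim\|u\|_{L^2_tL^6_x}^2\|u\|_{L^\infty_tH^1_x}$ (Sobolev embedding $H^1(\R^3)\hookrightarrow L^6$), one gets a closed inequality of the form $X\lesssim A + X^3$ where $A$ bounds the linear part. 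The subtle point is that the naive linear bound $\|\vec u(t_*)\|_\HH\le M$ is \emph{not} small, so a pure small-data argument fails; instead one must exploit that the $L^2_tL^6_x$ norm of the linear evolution of $\vec u(t_*)$ over a \emph{short} interval, or after \emph{subtracting the low-frequency part}, is controlled by $\mu$ times powers of $M$. Concretely, I would split $u(t_*)=P_{\le N}u(t_*)+P_{>N}u(t_*)$: the high-frequency part has small $L^2$ norm once $N$ is large (since $\|\nabla u(t_*)\|_{L^2}\lesssim\mu$ forces $\|P_{>N}u(t_*)\|_{L^2}\lesssim\mu/N$), giving small Strichartz norm on all of $\R$; the low-frequency part $P_{\le N}u(t_*)$ has $\dot H^{1/2}$-type norm (the scaling-critical one for this Strichartz space) bounded by $N^{1/2}\|\nabla u(t_*)\|_{L^2}^{?}$-interpolated-with-$L^2$, which is $\lesssim N^{1/2}\mu^{1/2}M^{1/2}$ or similar — small provided we first fix the interval length so that the relevant norm is finite, then choose $N$ and finally $\mu_0(M)$ small. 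This is the step I expect to be the main obstacle: bookkeeping the interplay between interval length, frequency cutoff $N$, the subcritical gain, and the dependence of $\mu_0$ on $M$, so that everything closes with the explicit final bound $\|u\|_{L^3_tL^6_x(\R\times\R^3)}\ll\mu^{1/6}$.

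Once the Strichartz norm of $u$ is shown to be finite and small on a neighborhood of $t_*$ inside $[0,2]$, I would propagate it to all of $\R$ by the usual continuity/bootstrap argument: the a priori global energy bound (from the equation $\|\vec u(t)\|_\HH$ stays comparable to $M$ as long as the solution exists, via energy conservation of NLKG) plus the local smallness of $\|u\|_{St}$ feeds back into the Strichartz inequality on successively larger intervals, and since $u^3$ is subcritical the smallness is self-improving and never saturates; hence $u$ is global and $\|u\|_{L^2_tL^6_x(\R)}+\|u\|_{L^3_tL^6_x(\R)}\lesssim\mu^{1/6}M^{C}\ll\mu^{1/6}$ after renaming. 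Finiteness of the global Strichartz norm then gives scattering to $0$ in $\HH$ as $t\to\pm\infty$ by the standard argument: $\vec v(t):=e^{-it\LR{\nabla}}$-evolution of the profile $\vec u(t)+\int_t^{\pm\infty}(\text{Duhamel})$ converges in $\HH$, because the tail $\|u^3\|_{L^1_tL^2_x(T,\infty)}\to0$. The power $1/6$ in the conclusion is exactly what comes out of interpolating $L^3_tL^6_x$ between $L^2_tL^6_x$ (Strichartz-admissible) and $L^\infty_tL^6_x$ (energy), so I would make sure the exponents are tracked through the nonlinear estimate to land on it; this is routine once the frequency-splitting in the previous paragraph is in place.
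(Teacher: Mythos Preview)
Your approach has a genuine gap at the pigeonhole step. By passing from the time-averaged bound $\int_0^2\|\nabla u(t)\|_{L^2}^2\,dt\le\mu^2$ to a single time $t_*$ with $\|\nabla u(t_*)\|_{L^2}\lesssim\mu$, you control the position component of the data but say nothing about the velocity $\dot u(t_*)$, which from the hypothesis is only bounded by $M$. The free Klein--Gordon evolution from $(u(t_*),\dot u(t_*))$ splits as $e^{it\langle\nabla\rangle}v_+ + e^{-it\langle\nabla\rangle}v_-$ with $v_\pm=[u(t_*)\mp i\langle\nabla\rangle^{-1}\dot u(t_*)]/2$, and the global $L^3_tL^6_x$ norm you need is governed (after the Besov--Sobolev interpolation the paper uses) by $\|v_\pm\|_{\dot H^{1/3}\cap\dot H^{8/9}}$, which in turn requires $\|\nabla v_\pm\|_{L^2}$ to be small. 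But $\|\nabla v_\pm\|_{L^2}$ contains the term $\|\tfrac{\nabla}{\langle\nabla\rangle}\dot u(t_*)\|_{L^2}$, which can be of order $M$. Your frequency splitting $P_{\le N}+P_{>N}$ is applied only to $u(t_*)$ and does nothing for this velocity contribution; there is no mechanism in your outline by which the high-frequency part of $\dot u(t_*)$ becomes small, so the linear Strichartz norm of the free evolution is not small and the bootstrap $X\lesssim A+X^3$ does not close with $A\ll 1$.

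The paper avoids this by \emph{not} passing to a single time. It first approximates $u$ on $[0,2]$ by the free solution $v$ with the same data at $t=0$ (the Duhamel error is $O(\mu^2 M)$ via $\|u^3\|_{L^1_tL^2_x}\lesssim\|\nabla u\|_{L^2_tL^2_x}^2\|\nabla u\|_{L^\infty_tL^2_x}$), and then computes the time integral $\int_0^2\|\nabla v(t)\|_{L^2}^2\,dt$ explicitly in Fourier space: the diagonal terms give $2\|\nabla v_+\|_{L^2}^2+2\|\nabla v_-\|_{L^2}^2$, while the cross term carries the oscillatory factor $(e^{4i\langle\xi\rangle}-1)/\langle\xi\rangle$ of modulus at most $2/\langle\xi\rangle\le 2$, hence by Cauchy--Schwarz is dominated by the diagonal. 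Thus the smallness of the \emph{time average} forces $\|\nabla v_+\|_{L^2}^2+\|\nabla v_-\|_{L^2}^2\lesssim\mu^2$, i.e., \emph{both} propagating modes are small in $\dot H^1$. This is precisely the information about $\dot u$ that your pointwise reduction throws away. From there, interpolation with the $L^2$ bound of size $M$ gives $\|v_\pm\|_{\dot H^{1/3}\cap\dot H^{8/9}}\lesssim M^{2/3}\mu^{1/3}+M^{1/9}\mu^{8/9}\ll\mu^{1/6}$, and the nonlinear fixed point closes globally as in your last paragraph.
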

\begin{proof}
First we see that $u$ can be approximated by the free solution 
\EQ{\nn 
 v(t):=e^{i\LR{\na}t}v_+ + e^{-i\LR{\na}t}v_-,
 \pq v_\pm:=[u(0)\mp i\LR{\na}^{-1}\dot u(0)]/2.}
This follows  simply from the Duhamel formula 
\EQ{\nn 
 \|v-u\|_{L^\I_tH^1_x(0,2)} \pt\lec \|u^3\|_{L^1_tL^2_x(0,2)}
 \lec \|u\|_{L^3_tL^6_x}^3 
 \pr\lec \|\na u\|_{L^2_tL^2_x(0,2)}^2\|\na u\|_{L^\I_tL^2_x(0,2)} \le \mu^2 M \ll \mu,}
if $\mu_0 M\ll 1$, where we used H\"older's inequality  and the Sobolev embedding $\dot H^1\subset L^6$. In particular, 
\EQ{\nn 
 4\mu^2 \pt\ge \int_0^2\|\na v(t)\|_{L^2_x}^2 \, dt
 \pr=C\int|\x|^2\left[2|\hat v_+|^2+2|\hat v_-|^2+\Im\{\LR{\x}^{-1}(e^{4i\LR{\x}}-1)\hat v_+\bar{\hat v_-}\}\right]d\x
 \pr\gec\|\na v_+\|_{L^2}^2+\|\na v_-\|_{L^2}^2,}
where $\hat v$ denotes the Fourier transform in $x$ of $v$. Now we use the Strichartz estimate for the free Klein-Gordon equation
\EQ{\label{eq:KGS}
 \|e^{\pm i\LR{\na}t}\fy\|_{L^3_tB^{4/9}_{18/5,2}(\R\times\R^3)} \lec \|\fy\|_{H^1_x},}
where $B_{p,q}^s$ denotes the Besov space with $s$ regularity on $L^p$. Using the terminology of~\cite[Lemma~4.1]{IMN}, \eqref{eq:KGS}
means that $(\frac13, \frac{5}{18}, \frac{4}{9})$ is $1$-admissible. 
Combining it with Sobolev, we obtain
\EQ{\nn 
 \|v\|_{L^3_tL^6_x(\R\times\R^3)} \pt\lec \|v\|_{L^3_t\dot B^{1/3}_{18/5,2}(\R\times\R^3)} \pr\lec \sum_\pm\|v_\pm\|_{\dot H^{1/3}\cap\dot H^{8/9}}
 \lec M^{2/3}\mu^{1/3}+M^{1/9}\mu^{8/9} \ll \mu^{1/6},}
if $\mu_0 M^4\ll 1$. Therefore,  we can identify $u$ as the fixed point for the iteration in the global Strichartz norm 
\EQ{\nn 
 \|u\|_{L^\I_t H^1_x(\R\times\R^3)} \lec M, \pq \|u\|_{L^3_tL^6_x(\R\times\R^3)} \ll \mu^{1/6},}
which automatically scatters. 
\end{proof}

\subsection{Local virial identity and non-existence of almost homoclinic orbits}Using the constants in Lemmas \ref{2nd exit}--\ref{0freq scat}, we choose $\e_*,\de_*,R_*,\mu>0$ such that 
\EQ{ \label{choice e R*}
 \pt \de_*\le\de_S,\pq \de_*\ll \de_X, \pq \e_*\le\e_0(\de_*), 
 \pr \e_* \ll R_*\ll\min(\de_*,\ka_1(\de_*)^{1/2},\ka_0^{1/2}\mu,J(Q)^{1/2}),}
\EQ{ \label{choice mu}
 \mu<\mu_0(M_*), \pq \mu^{1/6}\ll J(Q)^{1/2}.} 

Suppose that a solution $u(t)$ on the maximal existence interval $I\subset\R$ satisfies for some $\e\in(0,\e_*]$, $R\in(2\e,R_*],$ and $\t_1<\t_2<\t_3\in I$, 
\EQ{\nn 
  E(\vec u)< J(Q)+\e^2, \pq d_Q(\vec u(\t_1))<R<d_Q(\vec u(\t_2))>R> d_Q(\vec u(\t_3)).}
Then there exist $T_1\in(\t_1,\t_2)$ and $T_2\in(\t_2,\t_3)$ such that 
\EQ{\nn 
 \pt d_Q(\vec u(T_1))=R=d_Q(\vec u(T_2))\le d_Q(\vec u(t))\pq (T_1<t<T_2).}
Lemma \ref{sign} gives us a fixed sign 
\EQ{
 \{\pm 1\}\ni \sg:=\Sg(u(t)) \pq (T_1<t<T_2).}

Now we derive the localized virial identity with a precise error bound. The cut-off function is defined by 
\EQ{\nn 
 w(t,x)=\CAS{\chi(x/(t-T_1+S)) & (t<(T_1+T_2)/2),\\ \chi(x/(T_2-t+S)) & (t>(T_1+T_2)/2),}}
where $S\gg 1$ is a constant to be determined later, and $\chi$ is a radial smooth function on $\R^3$ satisfying $\chi(x)=1$ for $|x|\le 1$ and $\chi(x)=0$ for $|x|\ge 2$. Using the equation we have 
\EQ{ \label{loc virial}
 V_w(t):=\LR{wu_t|(x\na+\na x)u}, \pq \dot V_w(t)=-K_2(u(t)) + O(E_{1}^0(t)),}
where $E_{j}^0(t)$ denotes the exterior energy defined by 
\EQ{\nn 
 \pt E_{j}^0(t):=\int_{X_{j}(t)}e^0(u) \,dx, \pq e^0(u):=[|\dot u|^2+|\na u|^2+|u|^2]/2, 
 \pr x \in X_{j}(t) \iff \CAS{|x|>j[t-T_1+S] &(T_1<t<\frac{T_1+T_2}{2}),\\
 |x|>j[T_2-t+S] &(\frac{T_1+T_2}{2}<t<T_2),}}
while the nonlinear version is denoted by 
\EQ{\nn 
 E_{j}(t):=\int_{X_{j}(t)}E(\vec u)\,dx, \pq E(\vec u):=e^0(u)-|u|^4/4.}
We infer from the finite propagation speed that 
\EQ{\nn 
 E_{1}(t)\le \max(E_{1}(T_1),E_{1}(T_2))
 \lec e^{-2S}+\sum_{t=T_1,T_2}\|\vec \ga(t)\|_E^2,}
where the term $e^{-2S}$ is dominating the tails of $Q$ and $\r$, due  to their exponential decay. Hence, choosing 
\EQ{\nn 
 S \gg |\log R| \gg 1,} 
we obtain $E_{1}(t)\lec R^2$. Similarly, we have 
\EQ{\nn 
 \sup_{j\ge 1}E_{j}(t) \lec R^2 \pq(T_1<t<T_2)}
To bound the free version, we use the exterior Sobolev inequality
\EQ{\nn 
 \|\fy\|_{L^4(r>S)} \lec \|\fy\|_{H^1(r>S)}.}
We remark that it does not require the radial symmetry. Then 
\EQ{ \label{E ext bd}
 E_{j}^0(t) \le E_{j}(t) + CE_{j}^0(t)^2 \pq (T_1<t<T_2),}
uniformly for $j\ge 1$. Since $E_{j}(t)\to 0$ as $j\to\I$ and $E_{j}(t)\lec R^2\ll 1$, the continuity in $j$ implies that  
\EQ{ \label{ext eng bd}
 E_{1}^0(t) \simeq E_{1}(t) \lec R^2 \pq (T_1<t<T_2).}
Thus we have obtained 
\EQ{ \label{eq:dotV}
 \dot V_w(t) = -K_2(u(t)) + O(R^2) \pq (T_1<t<T_2).}

We turn to the leading term $K_2$. In order to apply the ejection Lemma \ref{2nd exit}, we need the exiting property of the solution \eqref{exiting condition}. For that purpose, take any $t_m\in[T_1,T_2]$ where $d_Q(\vec u(t))$ attains a minimum in $t$ such that 
\EQ{
 (R\le)\ R_m:=d_Q(\vec u(t_m)) = \inf_{|t-t_m|<t_0,t\in[T_1,T_2]} d_Q(\vec u(t)) <\de_*.}
$T_1$ and $T_2$ obviously satisfy the above, but there may be numerous other minimum points if $\vec u(t)$ is circulating in the phase space, which indeed happens for the approximating ODE obtained by the projection onto the $\la$ component.

Applying Lemma \ref{2nd exit} to $u(t-t_m)$ and $u(t_m-t)$, as well as Lemma \ref{sign}, one obtains 
\EQ{ \label{u in Im}
 \pt d_Q(\vec u(t)) \simeq -\sg \la(t) \simeq e^{k|t-t_m|}R_m,
 \pq \sg K_2(u(t)) \gec d_Q(\vec u(t))-C_*R_m,}
until $d_Q(\vec u(t))$ reaches $\de_X\gg\de_*$. Let $I_m$ denote that time interval around $t_m$. 
Then the integral of \eqref{eq:dotV} is estimated on each $I_m$ by 
\EQ{ \label{int Im}
 [\sg V_w(t)]_{I_m} \gec \int_{I_m} [d_Q(\vec u(t))-C_*R_m-O(R^2)]dt
 \sim \de_X,}
thanks to the exponential growth and $R\le R_m\le\de_*\ll\de_X$. In the remainder 
\EQ{
 I':=[T_1,T_2]\setminus\Cu_m I_m,} 
one has  $d_Q(\vec u(t))>\de_*$, and $\e\le\e_0(\de_*)$. Hence, Lemma \ref{K lower bd} gives us \eqref{-K bd} if $\sg=-1$, or \eqref{+K bd} if $\sg=1$. 

In the latter case, Lemma \ref{Sg+ eng bd} implies that $\|\vec u\|_\HH\le M_*$ on $[T_1,T_2]$. Since $d_Q(\vec u(t))>\de_*\gg R$ for any $t\in I'$, the hyperbolic behavior \eqref{u in Im} on $I_m$ implies
\EQ{
 [t-1,t+1] \subset [T_1,T_2],}
and Lemma \ref{0freq scat} together with \eqref{choice mu} implies 
\EQ{ 
 \int_{t-1}^{t+1} \|\na u(s)\|_{L^2_x}^2 ds > \mu^2,}
since otherwise $\|u\|_{L^3_tL^6_x}\ll\mu^{1/6}\ll J(Q)^{1/2}$, which contradicts that $d_Q(\vec u(T_1))=R\ll J(Q)^{1/2}$. Combining it with \eqref{+K bd} or \eqref{-K bd}, we obtain 
\EQ{ \label{K2 int bd}
 \int_{t-1}^{t+1} K_2(u(s)) ds \gec \min(\ka_1(\de_*),\ka_0\mu^2) \gg R_*^2,}
due to the choice of $R$ in \eqref{choice e R*}. 
Combining this and \eqref{int Im}, we obtain 
\EQ{
 [\sg V_w(t)]_{T_1}^{T_2} \gec \de_X\times\#\{t_m\}.}
The left-hand side is bounded by using the exponential decay of $Q$, 
\EQ{\nn 
 \lec \sum_{t=T_1,T_2}\|\dot v(t)\|_{L^2}+S\|v(t)\|_E^2 \lec R+SR^2 \lec R,} 
provided that we choose $S$ such that 
\EQ{\nn 
 |\log R| \ll S \ll 1/R.}
Thus we have arrived at a contradiction since $R\ll\de_X$, precluding ``almost homoclinic" orbits, i.e., any trajectory which exits from and returns to the $R$ neighborhood of $\{\pm Q\}$. In other words, every solution is allowed to enter and exit a sufficiently small neighborhood of $\pm Q$ at most once. 

Notice that the contradiction simply means that $T_2=\I$, and then all the above analysis remains valid, except for the upper bound of $[\sg V_w]_{T_1}^{T_2}$. Thus we have proven the following result.

\begin{thm}[One-pass theorem] \label{no homo}
Let $\e_*,R_*>0$ be as in \eqref{choice e R*}. If a solution $u$ of NLKG on an interval $I$ satisfies for some $\e\in(0,\e_*]$, $R\in(2\e,R_*]$, and $\t_1<\t_2\in I$, 
\EQ{\nn
 E(\vec u)< J(Q)+\e^2, \pq d_Q(\vec u(\t_1))<R=d_Q(\vec u(\t_2)),}
then for all $t\in (\t_2,\I)\cap I=:I'$, we have $d_Q(\vec u(t))> R$. 
\end{thm}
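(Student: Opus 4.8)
The plan is to argue by contradiction, exactly in the spirit of the discussion preceding the theorem statement. Suppose the conclusion fails: then there is a time $\t_3\in I'$ with $d_Q(\vec u(\t_3))<R$, while $d_Q(\vec u(\t_2))=R$ and $d_Q(\vec u(\t_1))<R$. This puts us precisely in the ``almost homoclinic'' configuration: the solution exits the $R$-neighborhood of $\{\pm Q\}$ (at some $T_1\in(\t_1,\t_2)$ with $d_Q(\vec u(T_1))=R$) and later re-enters it (at some $T_2\in(\t_2,\t_3)$ with $d_Q(\vec u(T_2))=R$), with $d_Q(\vec u(t))\ge R$ on $[T_1,T_2]$. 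The goal is to show such a trajectory cannot exist, which forces $T_2=\infty$, i.e.\ the solution never returns, which is the assertion.

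The core of the argument is the localized virial estimate. First I would introduce the space-time cut-off $w(t,x)$ adapted to the backward light cone emanating from a point near $T_1$ and the forward light cone ending near $T_2$, with a width parameter $S$ chosen in the window $|\log R|\ll S\ll 1/R$. Using the NLKG equation one gets $\dot V_w(t)=-K_2(u(t))+O(E_1^0(t))$, where $E_1^0$ is the exterior (free) energy; finite propagation speed together with the exponential decay of $Q$ and $\rho$ and the smallness of $\|\vec\ga(T_i)\|_E\lesssim R$ bounds the exterior nonlinear energy $E_j(t)\lesssim R^2$ uniformly in $j$, and the exterior Sobolev inequality upgrades this to $E_1^0(t)\simeq E_1(t)\lesssim R^2$, so that $\dot V_w(t)=-K_2(u(t))+O(R^2)$ on $(T_1,T_2)$.

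Then I would split $[T_1,T_2]$ into the ``near-$\pm Q$'' pieces $I_m$ around each local minimum $t_m$ of $d_Q(\vec u(\cdot))$ with $R_m:=d_Q(\vec u(t_m))<\de_*$, and the complement $I'=[T_1,T_2]\setminus\bigcup_m I_m$. On each $I_m$, applying the ejection Lemma~\ref{2nd exit} to both $u(t-t_m)$ and $u(t_m-t)$, together with the sign function $\Sg$ from Lemma~\ref{sign}, gives the exponential growth $d_Q(\vec u(t))\simeq e^{k|t-t_m|}R_m$ and the crucial lower bound $\sg K_2(u(t))\gtrsim d_Q(\vec u(t))-C_*R_m$; integrating $-K_2$ over $I_m$ and using $R\le R_m\le\de_*\ll\de_X$ yields $[\sg V_w]_{I_m}\gtrsim\de_X$. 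On $I'$ one has $d_Q(\vec u(t))>\de_*$, so Lemma~\ref{K lower bd} applies, giving a definite sign for $K_0,K_2$; in the case $\sg=+1$ one invokes the energy bound from Lemma~\ref{Sg+ eng bd} and Lemma~\ref{0freq scat} (``vanishing kinetic energy leads to scattering'') to rule out $K_2$ being small in time average --- otherwise $u$ would scatter with tiny Strichartz norm, contradicting that $d_Q(\vec u(T_1))=R$ is far from the energy level $J(Q)^{1/2}$. This produces $\int_{t-1}^{t+1}K_2\gtrsim\min(\ka_1(\de_*),\ka_0\mu^2)\gg R_*^2$ on $I'$. Adding the contributions of all $I_m$ and $I'$ gives $[\sg V_w]_{T_1}^{T_2}\gtrsim\de_X\cdot\#\{t_m\}\gtrsim\de_X$, while on the other hand $|V_w|$ at $t=T_1,T_2$ is bounded by $\|\dot v\|_{L^2}+S\|v\|_E^2\lesssim R+SR^2\lesssim R$ using $S\ll 1/R$. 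Since $R\ll\de_X$, this is the desired contradiction.

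The main obstacle --- and the only genuinely delicate point --- is the interplay of scales in the choice of $\e_*,\de_*,R_*,\mu,S$ recorded in \eqref{choice e R*}--\eqref{choice mu}, together with the need to handle the regime where $K_2(u(t))$ (equivalently $\|\nabla u(t)\|_{L^2}$) can momentarily vanish on $I'$ without the variational bound of Lemma~\ref{K lower bd} giving anything. This is exactly where Lemma~\ref{0freq scat} is needed: the time-averaged smallness of $\|\nabla u\|_{L^2}^2$ over a unit interval would force all frequencies to $0$ and hence scattering with small subcritical Strichartz norm, which is incompatible with $\vec u$ having come from distance $R\ll J(Q)^{1/2}$ of $\pm Q$. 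One must also be careful that each such unit interval $[t-1,t+1]$ around a point of $I'$ is genuinely contained in $[T_1,T_2]$, which follows from the exponential ejection bound on the adjacent $I_m$'s forcing $|t-t_m|$ to be not too small. Everything else is a bookkeeping exercise once these estimates are in place.
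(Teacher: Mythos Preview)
Your proposal is correct and follows essentially the same approach as the paper's proof. The paper's argument is precisely the contradiction scheme you describe: set up $T_1<T_2$ with $d_Q(\vec u(T_j))=R$ and $d_Q\ge R$ in between, derive the localized virial identity $\dot V_w=-K_2+O(R^2)$ via the exterior energy bound with $|\log R|\ll S\ll 1/R$, decompose $[T_1,T_2]$ into the ejection intervals $I_m$ (handled by Lemma~\ref{2nd exit}) and the complement $I'$ (handled by Lemma~\ref{K lower bd}, supplemented in the $\sg=+1$ case by Lemmas~\ref{Sg+ eng bd} and~\ref{0freq scat}), and conclude from $[\sg V_w]_{T_1}^{T_2}\gtrsim\de_X$ versus $|V_w(T_j)|\lesssim R$ the desired contradiction. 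Your identification of the delicate points---the scale hierarchy in \eqref{choice e R*}--\eqref{choice mu}, the role of Lemma~\ref{0freq scat} in ruling out vanishing time-averaged kinetic energy, and the inclusion $[t-1,t+1]\subset[T_1,T_2]$---matches the paper exactly.
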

Moreover, there exist disjoint subintervals $I_1,I_2,\dots\subset I'$ with the following property: On each $I_m$, there exists $t_m\in I_m$ such that 
\EQ{
 d_Q(\vec u(t))\simeq e^{k|t-t_m|}d_Q(\vec u(t_m)), \pq \min_{s=0,2}\sg K_s(u(t))\gec d_Q(\vec u(t))-C_*d_Q(\vec u(t_m)),}
where $\sg:=\Sg(\vec u(t))\in\{\pm 1\}$ is constant, $d_Q(\vec u(t))$ is increasing for $t>t_m$, decreasing for $t<t_m$, 
equals to $\de_X$ on $\p I_m$. For each $t\in I'\setminus\Cu_mI_m$ and $s=0,2$, one has  $(t-1,t+1)\subset I'$, $d_Q(\vec u(t))\ge\de_*$ and 
\EQ{ \label{int bd K}
 \int_{t-1}^{t+1}\min_{s=0,2}\sg K_s(u(t'))dt' \gg R_*^2.}

By the monotonicity, we can keep applying the above theorem at each $t>\t_2$ until $d_Q(\vec u)$ reaches $R_*$. 
Besides, one concludes that at  any later time $t_m>\t_2$ necessarily  $d_Q(\vec u)>R_*$. In other words, $u$ cannot return to the 
distance $R_*$ to $\pm Q$, after it is ejected to the distance $\de_X>R_*$.  

\section{Blowup after ejection}
Here we prove that the solution $u$ with $\Sg(\vec u(\t_2))=-1$ in Theorem \ref{no homo} blows up in finite time after $\t_2$, by the contradiction argument of Payne-Sattinger, using $K_0$. Suppose that $u$ extends to all $t>\t_2$ and let $y(t):=\|u(t)\|_{L^2}^2$. From the NLKG we have 
\EQ{ \label{ytt}
 \ddot y = 2[\|\dot u\|_{L^2_x}^2+\sg K_0(u(t))].}
Applying the lower bound on $K_0$ in Theorem \ref{no homo} to the integral yields
\EQ{
 [\dot y]_{\t_2}^\I \gec \sum_{I_m}\de_X+\int_{I'}R^2 dt=\I,}
and so $y(t)\to\I$ as $t\to\I$. Then from \eqref{ytt}, 
\EQ{
 \ddot y \ge -8E(\vec u)+6\|\dot u\|_{L^2}^2+2\|u\|_{H^1}^2 \ge 6\|\dot u\|_{L^2}^2 \ge 3(\dot y)^2/(2y),}
for large $t$, where we used Cauchy-Schwarz for $\dot y=2\LR{u|\dot u}$. Hence,
\EQ{
 \p_t^2(y^{-1/2})=-(2y^{3/2})^{-1}[y\ddot y-3(\dot y)^2/2]\le 0,}
which contradicts that $y\to\I$ as $t\to\I$. Therefore, $u$ does not extend to $t\to\I$. 

\section{Scattering after ejection}\label{sec:scat} 

For the solution $u$ with $\Sg(\vec u(\t_2))=+1$ in Theorem \ref{no homo}, the forward global existence follows from the energy bound Lemma \ref{Sg+ eng bd}. 
We prove its scattering to $0$ for $t\to\I$, by the contradiction argument of Kenig-Merle, using $K_2$. 

Fix $\e\in(0,\e_*)$ and let $\UU(\e,R_*)$ be the collection of all solutions $u$ of NLKG on $[0,\I)$ satisfying 
\EQ{
 E(\vec u) \le J(Q)+\e^2, \pq d_Q(\vec u[0,\I))\subset [R_*,\I), \pq \Sg(\vec u[0,\I))=+1.}
Note that the first two conditions imply that $\vec u[0,\I)\subset\HH_{(\de_*)}$ so that we can use Lemma \ref{sign} to define $\Sg(\vec u)$. 
By the remark after Theorem \ref{no homo}, any solution with $\Sg=+1$ in that theorem will eventually satisfy the above conditions. 

For each $E>0$, let $M(E)$ be a uniform Strichartz bound defined by
\EQ{ \label{def ME}
 M(E):=\sup\{\|u\|_{L^3_tL^6_x(0,\I)} \mid u\in\UU(\e,R_*),\ E(\vec u)\le E\},}
where we chose the norm $L^3_tL^6_x$ to be an $H^1$ subcritical and non-sharp admissible Strichartz norm such that its finiteness implies scattering. 
We know by \cite{IMN} that $M(E)<\I$ for $E<J(Q)$. In fact, in that case a uniform bound holds for $L^3_tL^6_x(\R)$. In order to extend
 this property to $J(Q)+\e^2$, put 
\EQ{ \label{def Estar}
 E^\star=\sup\{E>0 \mid M(E)<\I\}}
and assume towards a contradiction  that 
\EQ{
 E^\star<J(Q)+\e^2.} 

We consider the nonlinear profile decomposition for any sequence $u_n\in\UU(\e,R_*)$ satisfying 
\EQ{ \label{def un}
 E(u_n)\to E^\star, \pq \|u_n\|_{L^3_tL^6_x(0,\I)}\to\I.} 
We are going to show that the remainder in the decomposition is vanishing and there is only one profile which is a critical element, i.e.,
\EQ{ \label{crit elm}
 u_\star\in\UU(\e,R_*),\pq E(u_\star)=E^\star, \pq \|u_\star\|_{L^3_tL^6_x(0,\I)}=\I.}
The decomposition is given as follows. First we have the {\it linear profile decomposition} of Bahouri-G\'erard~\cite{BaG}. 

\begin{prop}
Let $\psi_n$ be a sequence of free Klein-Gordon solutions bounded in $\HH$. Then after replacing it by a subsequence, 
there exist a sequence of free solutions $v^j$ bounded in $\HH$, and sequences of times $t_n^j\in\R$ such that for $v_n^j$ and $\ga_n^k$ defined by
\EQ{\nn 
  \psi_n(t) = \sum_{j<k} v_n^j(t) + \ga_n^k(t), \pq v_n^j(t) = v^j(t+t_n^j),}
we have for any $j<k$, $\vec\ga_n^k(-t_n^j) \to 0$ weakly in $\HH$ as $n\to\I$, 
\EQ{ \label{orth}
 \pt \lim_{k\to \I} \limsup_{n\to\I} \|\ga_n^k\|_{(L^\I_tL^4_x\cap L^3_t L^6_x)(\R\times\R^3)}=0,
 \pq\lim_{n\to\I} |t_n^j-t_n^k| = \I.}
\end{prop}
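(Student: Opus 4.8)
The plan is to follow the Bahouri-G\'erard scheme of inductively extracting concentration profiles, using that in the radial, energy-subcritical Klein-Gordon setting the only source of non-compactness is a time translation $t_n^j$: spatial translations would destroy radiality and the mass term rules out scalings. The organizing quantity is the dispersive defect
\[
 \Lambda(\{\phi_n\}):=\limsup_{n\to\I}\|\phi_n\|_{(L^\I_tL^4_x\cap L^3_tL^6_x)(\R\times\R^3)},
\]
which is finite for any sequence of free solutions bounded in $\HH$ (the $L^\I_tL^4_x$ part by Sobolev embedding, the $L^3_tL^6_x$ part by the Strichartz estimate~\eqref{eq:KGS}).

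The analytic core, which I expect to be the main obstacle, is an inverse Strichartz lemma: if $\Lambda(\{\psi_n\})>0$ then, after passing to a subsequence, there are times $t_n^1\in\R$ and a nonzero free solution $v^1$ with $\vec\psi_n(-t_n^1)\rightharpoonup\vec v^1(0)$ weakly in $\HH$, together with a quantitative lower bound $\|\vec v^1\|_\HH\gec\eta(\Lambda(\{\psi_n\}))$, where $\eta$ is a fixed function, positive on $(0,\I)$, depending only on the uniform $\HH$-bound of $\{\psi_n\}$. I would obtain it by interpolating the non-vanishing $L^3_tL^6_x$ (and $L^\I_tL^4_x$) norm against the bounded $\HH$-norm via a Besov/Bernstein refinement of~\eqref{eq:KGS} in the spirit of~\cite{IMN}: this forces the free solution to carry a fixed fraction of its energy in a single Littlewood-Paley annulus, and then, by the radial Sobolev embedding, in a single ball at some time $t_n^1$, after which $v^1$ is read off as a weak limit of $\vec\psi_n(-t_n^1)$. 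This is the step where the precise pair of norms, the subcriticality, and the radial reduction all enter; the remainder of the argument is bookkeeping.

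Next I would iterate. In the notation $\ga_n^k=\psi_n-\sum_{j<k}v_n^j$ (so $\ga_n^1=\psi_n$), apply the inverse Strichartz lemma to $\{\ga_n^k\}$ to extract $(t_n^k,v^k)$ with $\vec\ga_n^k(-t_n^k)\rightharpoonup\vec v^k(0)$ and $\|\vec v^k\|_\HH\gec\eta(\Lambda(\{\ga_n^k\}))$, set $v_n^k(t):=v^k(t+t_n^k)$ and $\ga_n^{k+1}:=\ga_n^k-v_n^k$, and repeat, passing to successive subsequences and diagonalizing at the end; by construction $\vec\ga_n^{k+1}(-t_n^k)\rightharpoonup 0$. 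Since the free Klein-Gordon energy $\frac12\|\vec v(t)\|_\HH^2$ is conserved and time translation acts unitarily on free solutions, weak convergence upgrades at each step to the asymptotic Pythagorean splitting $\|\vec\ga_n^k\|_\HH^2=\|\vec v_n^k\|_\HH^2+\|\vec\ga_n^{k+1}\|_\HH^2+o(1)$ as $n\to\I$. Two facts are then established by simultaneous induction on $k$: (a) $\vec\ga_n^k(-t_n^j)\rightharpoonup 0$ for all $j<k$, which follows from the inductive hypothesis together with the dispersive fact that a free Klein-Gordon solution tends weakly to $0$ in $\HH$ as its time argument $\to\pm\I$; and (b) pairwise orthogonality $|t_n^j-t_n^k|\to\I$ for $j\ne k$ — for if $t_n^j-t_n^k$ remained bounded along a subsequence, then $\vec\ga_n^k(-t_n^k)$ would be the image of the weakly null sequence $\vec\ga_n^k(-t_n^j)$ (by (a)) under a bounded-time free evolution, hence itself weakly null, forcing $v^k=0$. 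Summing the Pythagorean identities yields $\sum_k\|\vec v^k\|_\HH^2\le\limsup_n\|\vec\psi_n\|_\HH^2<\I$, so $\|\vec v^k\|_\HH\to 0$, and then the lower bound forces $\Lambda(\{\ga_n^k\})\to 0$ as $k\to\I$, which is exactly~\eqref{orth}. The same asymptotic orthogonality of the $t_n^j$ makes all cross terms between distinct $v_n^j$ in the relevant Strichartz norms vanish as $n\to\I$ by the dispersive decay of free solutions — a property needed here and, afterwards, in the nonlinear profile decomposition.
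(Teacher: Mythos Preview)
Your overall scheme---inductive extraction of profiles via weak limits after time translation, the inductive proof of weak vanishing $\vec\ga_n^k(-t_n^j)\rightharpoonup 0$ and of $|t_n^j-t_n^k|\to\I$, and the Pythagorean summation forcing $\|\vec v^k\|_\HH\to 0$---matches the paper exactly. The difference lies in the step you flag as ``the main obstacle,'' the inverse Strichartz lemma.

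The paper's extraction is considerably more elementary than what you propose. It uses only the $L^\I_tL^4_x$ part of the dispersive norm as the organizing quantity: if $\nu^k:=\limsup_n\|\ga_n^k\|_{L^\I_tL^4_x}>0$, one simply picks $t_n^k$ so that $\|\ga_n^k(-t_n^k)\|_{L^4_x}\ge\nu^k/2$. Since $\vec\ga_n^k(-t_n^k)$ is bounded in $\HH$, after passing to a subsequence it converges weakly in $\HH$, and by the \emph{compact} embedding $H^1_{\mathrm{rad}}\hookrightarrow L^4$ the first component converges strongly in $L^4_x$, so the weak limit $\vec v^k(0)$ is nonzero with $\|v^k(0)\|_{H^1}\gec\nu^k$ by Sobolev. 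No Besov refinement, no Littlewood--Paley localization, no refined Strichartz is needed. The $L^3_tL^6_x$ smallness of the remainder then follows a posteriori by interpolation between the $L^\I_tL^4_x$ smallness and the uniform Strichartz bound from the $\HH$-bound. Your refined-Strichartz route would also work and is the right tool in critical or non-radial settings where compact embeddings are unavailable, but here the radial subcriticality makes the whole extraction step essentially trivial---what you expected to be the hard part is in fact the easiest.
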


The orthogonality of $t_n^j$ and the weak vanishing of $\ga_n^k$ implies that 
\EQ{ \label{H1 orth}
 \limsup_{n\to\I} \Bigl|\|\vec\psi_n\|_{\HH}^2-\sum_{j<k}\|\vec v^j\|_{\HH}^2-\|\vec\ga_n^k\|_{\HH}^2 \Bigr|=0,}
where the $\HH$ norms are independent of $t$ because all components are free solutions, and in particular all of $v^j$ and $\ga_n^k$ are uniformly bounded in $\HH$. It is simpler than the original form of the Bahouri-G\'erard decomposition for the wave equation, because the translational symmetry does not occur here  by the radial assumption, and the frequency parameter is fixed by the subcriticality of the remainder estimate. 
\begin{proof}
Since $\ga_n^k$ is bounded in $H^1_x$, interpolation with the Strichartz bound implies that it suffices to estimate the remainder in $L^\I_tL^4_x$. 
Let $\ga_n^0:=\psi_n$ and $k=0$. If 
\EQ{\nn 
 \nu^k:=\limsup_{n\to\I}\|\ga_n^k\|_{L^\I_t L^4_x}=0,} 
then we are done by putting $\ga_n^l=\ga_n^k$ for all $l>k$. 
Otherwise, there exists a sequence $t_n^k\in\R$ such that $\|\ga_n^k(-t_n^k)\|_{L^4_x} \ge \nu^k/2$ for large $n$. Since $\vec\ga_n^k(-t_n^k)\in \HH$ is bounded, after extracting a subsequence it converges weakly in $\HH$, and $\ga_n^k(-t_n^k)$ converges strongly in $L^4_x$. Let $v^k$ be the free solution given by the limit
\EQ{\nn 
 \lim_{n\to\I}\vec\ga_n^k(-t_n^k) = \vec v^k(0),}
then by Sobolev $\|v^k(0)\|_{H^1}\gec\nu^k$. We repeat the same procedure by induction for $k=1,2,3,\dots$. 
Let $U(t)$ denote the free Klein-Gordon propagator in $\HH$. If $|t_n^j-t_n^k|\to c\in\R$ for some $j<k$, then 
\EQ{\nn 
 \vec\ga_n^k(-t_n^k)=U(t_n^j-t_n^k)\vec\ga_n^k(-t_n^j) \to 0,}
weakly in $\HH$, hence $|t_n^j-t_n^k|\to\I$ as long as $v^k\not=0$. Then for all $j\le k$, 
\EQ{\nn 
 \vec\ga_n^{k+1}(-t_n^j)=\vec\ga_n^k(-t_n^j)-\vec v^k(t_n^k-t_n^j) \to 0}
weakly in $\HH$. In particular we have \eqref{H1 orth}, and so 
\EQ{\nn 
 \limsup_{n\to\I}\|\psi_n\|_{\HH}^2 \ge \sum_{j<k}\|v^j\|_{\HH}^2
 \gec \sum_{j<k}(\nu^j)^2,}
uniformly in $k$. Hence, $\limsup_{n\to\I}\|\ga_n^k\|_{L^\I_t L^4_x}=\nu^k \to 0$, as $k\to\I$. 
\end{proof}

Before applying the profile decomposition, we translate $u_n$ in $t$ to achieve 
\EQ{ \label{init adjust}
 d_Q(\vec u_n(0))>\frac{2}{3}\de_X, \pq K_2(u_n(0))\gg \e_*^2.} 
Since $d_Q(\vec u_n)$ remains  above $R_*$, the ejection Lemma \ref{2nd exit} implies that there exists $0\le T_n\lec k^{-1}\log(\de_X/R_*)$ so that $d_Q(\vec u_n(T_n))\ge\de_X$. Since $\Sg=+1$, Lemma \ref{Sg+ eng bd} implies that $\|u_n\|_{L^\I\HH(0,\I)}\le M_*$. Since $\|u_n\|_{L^3_tL^6_x}\to\I$, by the same argument as for \eqref{K2 int bd}, we deduce that there exists $0\le T_n'$ near $T_n$ such that 
\EQ{
 K_2(u_n(T_n')) \gg R_*^2 > 2\e_*^2, \pq d_Q(\vec u_n(T_n'))>\frac{2}{3}\de_X.}
Translating $u_n:=u_n(t-T_n')$, we obtain \eqref{init adjust}, in addition to \eqref{def un}. 

Now apply the above lemma to the free solution with the same initial data as $u_n$, and let $w_n^j$ be the nonlinear solution with the same data as $v_n^j$ at $t=0$
\EQ{\nn 
 U(t)\vec u_n(0)=\sum_{j<k} \vec v_n^j + \vec \ga_n^k, \pq \vec w_n^j(t)=U^N(t)\vec v_n^j(0),}
where $U^N(t)$ denotes the nonlinear Klein-Gordon propagator in $\HH$. 
Let $t_n^j\to t_\I^j\in[-\I,\I]$ and $u^j$ be the nonlinear solution satisfying 
\EQ{\nn
 \lim_{t\to t_\I^j}\|\vec u^j(t)-\vec v^j(t)\|_{\HH}=0,}
which exists at least locally around $t=t_\I^j$, as the unique solution of either the Cauchy problem at $t_\I^j\in\R$ or the wave 
operator at $t_\I^j\in\{\pm\I\}$. As a consequence of the local theory one has 
\EQ{\nn 
 w_n^j(t)-u^j(t+t_n^j)\to 0 \pq(n\to\I)}
in the energy and Strichartz norms locally around $t=0$. Thus we consider the following {\it nonlinear profile decomposition} 
\EQ{\nn 
 u_n = \sum_{j<k} u^j_n+\ga_n^k+error,\pq u^j_n:=u^j(t+t_n^j).}
Then $K_2(u_n(0))\gg \e^2$, $E(u_n)<J(Q)+\e^2$ and the orthogonality of the linear decomposition imply 
\EQ{\nn 
 \pt J(Q)-\e^2>\limsup_{n\to\I}[E(u_n)-K_2(u_n(0))/3 ]
 \pr\ge \limsup_{n\to\I} G_2(u_n(0)) = \limsup_{n\to\I} \sum_{j<k}G_2(u^j_n(0))+G_2(\ga_n^j(0)).}
Since $G_2$ is positive definite, we obtain $\limsup_n G_2(u^j_n(0))<J(Q)$, 
 which implies, via the minimizing property \eqref{cnstr min}, that $K_s(u^j_n(0))\ge 0$ for large $n$ and $s=0,2$. 
In particular, they all have positive energy, and so by the orthogonality applied to the energy, we deduce that $E(u^j)<J(Q)$ 
except for at most one of them. Those nonlinear profiles scatter as $t\to\pm\I$. 

If all of the profiles scatter for $t\to\pm\I$, or more precisely if $\|u^j\|_{L^3_tL^6_x(\R)}<\I$, then the 
following long-time perturbation argument implies that the original solutions $u_n$ also scatter and remain bounded in the Strichartz norm uniformly in $n$.  
\begin{lem} \label{PerturLem} 
There are continuous functions $\nu_0,C_0:(0,\I)^2\to(0,\I)$ such that the following holds: 
Let $I\subset \R$ be an interval, $u,w\in C(I;\HH)$ satisfying for some $A,B>0$ and $t_0\in I$
\EQ{ \label{asm ebd}
  \|\vec u\|_{L^\I_t(I;\HH)} + \|\vec w\|_{L^\I_t(I;\HH)}  \le A, \pq \|w\|_{L^3_t(I;L^6_x)} \le B,} 
\EQ{\nn 
 \pt\|eq(u)\|_{L^1_t(I;L^2_x)} 
   + \|eq(w)\|_{L^1_t(I;L^2_x)} + \|\ga_0\|_{L^3_t(I;L^6_x)} \le \nu_0(A,B),}
where $eq(u):=\ddot u-\De u +u-u^3$, and $\vec\ga_0:=U(t-t_0)(\vec u-\vec w)(t_0)$.  Then
\EQ{ \nn 
  \|\vec u-\vec w-\vec\ga_0\|_{L^\I_t(I;\HH)}+\|u-w\|_{L^3_t(I;L^6_x)} \le C_0(A,B)\nu_0.} 
\end{lem}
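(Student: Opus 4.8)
The plan is a standard long-time perturbation (stability) argument. Set $z:=u-w$ and $r:=z-\ga_0$; since $\ga_0$ solves the free Klein--Gordon equation, $r$ solves $\ddot r-\De r+r=F$ with $\vec r(t_0)=0$, where $F:=(u^3-w^3)+(eq(u)-eq(w))$. The goal is to bound $\|r\|_{L^3_tL^6_x(I)}+\|\vec r\|_{L^\I_t\HH(I)}$ by $C_0(A,B)\nu_0$. First I would fix an absolute constant $\de_0>0$, small relative to the Strichartz constant below, and partition $I$ into $N\lec(B/\de_0)^3+1=:N(B)$ consecutive subintervals $I_0,I_1,\dots$ on each of which $\|w\|_{L^3_tL^6_x(I_\ell)}\le\de_0$, arranging moreover that $t_0$ is an endpoint of $I_0$ (splitting a subinterval if necessary), and labeling them so that they move outward from $t_0$. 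I would then estimate $r$ on these subintervals one at a time.

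On a subinterval $J=I_\ell$, with $a$ its endpoint nearest $t_0$, the inhomogeneous Strichartz estimate and the (trivial) energy estimate for the free Klein--Gordon equation, as in~\eqref{eq:strichKG1} and \cite[Lemma~4.1]{IMN}, give, writing $\|r\|_{S(J)}:=\|r\|_{L^3_tL^6_x(J)}+\|\vec r\|_{L^\I_t\HH(J)}$,
\[
 \|r\|_{S(J)}\lec \|\vec r(a)\|_\HH+\|F\|_{L^1_tL^2_x(J)} .
\]
H\"older's inequality in $x$ and in $t$ bounds the forcing by
\[
 \|F\|_{L^1_tL^2_x(J)}\lec \|u-w\|_{L^3_tL^6_x(J)}\big(\|u\|_{L^3_tL^6_x(J)}^2+\|w\|_{L^3_tL^6_x(J)}^2\big)+\nu_0 ,
\]
where the hypothesis $\|eq(u)\|_{L^1_tL^2_x(I)}+\|eq(w)\|_{L^1_tL^2_x(I)}\le\nu_0$ handles the last two terms of $F$. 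I would then use $\|u-w\|_{L^3_tL^6_x(J)}\le\|r\|_{L^3_tL^6_x(J)}+\|\ga_0\|_{L^3_tL^6_x(J)}\le\|r\|_{L^3_tL^6_x(J)}+\nu_0$ and $\|u\|_{L^3_tL^6_x(J)}\le\|r\|_{L^3_tL^6_x(J)}+\nu_0+\de_0$; note that no a priori global Strichartz bound on $u$ is needed, since on each $J$ the quantity $\|u\|_{L^3_tL^6_x(J)}$ is reconstructed from $r$, $\ga_0$, and the locally small $w$. A continuity (bootstrap) argument on $J$, started from the smallness of $\|\vec r(a)\|_\HH$, then absorbs the quadratic and cubic nonlinear contributions once $\de_0$ and $\nu_0$ are small enough, giving $\|r\|_{S(J)}\le 2C\big(\|\vec r(a)\|_\HH+\nu_0\big)$ with an absolute constant $C\ge1$.

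To finish I would iterate over the subintervals: since $\vec r$ is continuous and the far endpoint of $I_\ell$ is the near endpoint of $I_{\ell+1}$, one has $\|\vec r(a_{\ell+1})\|_\HH\le\|r\|_{S(I_\ell)}$, so the recursion $\|r\|_{S(I_{\ell+1})}\le 2C(\|r\|_{S(I_\ell)}+\nu_0)$, started from $\vec r(t_0)=0$, closes and gives $\|r\|_{S(I_\ell)}\lec (4C)^{N}\nu_0=:C_0(A,B)\nu_0$ on every subinterval; this forces $\nu_0=\nu_0(A,B)$ to be chosen exponentially small in $N=N(B)$, which is harmless as $N$ depends only on $A,B$. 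Summing over the finitely many $I_\ell$ — the $L^3_t$-norm adding at the third power over the disjoint $I_\ell$, the $L^\I_t\HH$-norm taking the maximum — yields the asserted bound. The step I expect to be the main obstacle is the bookkeeping in this iteration: one must ensure at each stage that the seed $\|\vec r(a_\ell)\|_\HH$ is already small enough to run the continuity argument, which is precisely why the subintervals must be processed outward from $t_0$ and $\nu_0$ taken small relative to $(2C)^{N}$; the remaining ingredients — the Strichartz/energy estimates and the H\"older bound on $F$ — are routine.
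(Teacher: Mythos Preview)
Your proposal is correct and follows essentially the same strategy as the paper's proof: partition $I$ into $O_B(1)$ subintervals on which $\|w\|_{L^3_tL^6_x}$ is small, apply the Strichartz estimate on each, bound the cubic difference $u^3-w^3$ by H\"older, and iterate outward from $t_0$ with exponential growth in the number of subintervals. The only cosmetic difference is bookkeeping: the paper propagates the free evolutions $\ga_j(t):=U(t-t_j)\vec\ga(t_j)$ (where $\ga=u-w$) from one subinterval to the next and controls $\|\ga_j\|_{L^3_tL^6_x}$ on the remaining interval, whereas you propagate $\|\vec r(a_\ell)\|_\HH$ directly; these are equivalent since Strichartz bounds $\|\ga_j\|_{L^3_tL^6_x}$ by $\|\vec\ga(t_j)\|_\HH$, and both lead to the same recursion.
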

\begin{proof}
Let $Z:=L^3_tL^6_x$ and 
\EQ{\nn 
 \ga:=u-w, \pq e:=(\p_t^2-\De+1)(u-w)-u^3+w^3.} 
There is a partition of the right half of $I$: 
\EQ{\nn 
 \pt t_0<t_1<\cdots<t_n,\pq I_j=(t_j,t_{j+1}),\pq I\cap(t_0,\I)=(t_0,t_n),
 \pr \|w\|_{Z(I_j)} \le \de \pq(j=0,\dots,n-1), \pq n\le C(B,\de).}
We omit the estimate on $I\cap(-\I,t_0)$ since it is the same by symmetry. 
Let $\ga_j(t):=U(t-t_j)\vec \ga(t_j)$. 
Then the Strichartz estimate applied to the equations of $\ga$ and $\ga_{j+1}$ implies 
\EQ{ \label{est S'}
 \pn\|\ga-\ga_j\|_{Z(I_j)} + \|\ga_{j+1}-\ga_j\|_{Z(\R)}
 \pt\lec \|(w+\ga)^3-(w)^3+e\|_{L^1_tL^2_x(I_j)}
 \pr\lec (\de+\|\ga\|_{Z(I_j)})^2\|\ga\|_{Z(I_j)}+\nu_0.}
Hence, by induction on $j$ and continuity in $t$, one obtains 
\EQ{ \label{S' iterate}
 \|\ga\|_{Z(I_j)} + \|\ga_{j+1}\|_{Z(t_{j+1},t_n)}
 \le C[\|\ga_j\|_{Z(t_j,t_n)}+\nu_0] \le (2C)^j\nu_0 \le (2C)^n\nu_0\ll\de,}
provided that $\nu_0(A,B)$ is chosen small enough. Repeating the estimate~\eqref{est S'} once more, we can bound the full Strichartz norms on $\ga$. 
\end{proof}

The point in using the above perturbation lemma is that the orthogonality in \eqref{orth}, i.e., $|t_n^j-t_n^k|\to\I$,  implies that 
\EQ{ \label{nonlinearity decomposed}
 (\sum_{j<k} u_n^j+\ga_n^k)^3 = \sum_{j<k} (u_n^j)^3 + o(1),}
in $L^1_tL^2_x$ as $n\to\I$ and $k\to\I$, as soon as each $u_n^j$ has globally finite $L^3_tL^6_x$ norm. The same argument works even if the norm is finite only on $(0,\I)$ or $(-\I,0)$, provided that the translation $t_n^j$ and the interval $I$ are placed such that the infinite part of the norm does not contribute. 
Hence at least one profile does not scatter. Let $u^0$ be the non-scattering profile. 

If $t_\I=+\I$, then by the construction of the nonlinear profile, $u^0$ scatters as $t\to\I$, leading to a uniform bound on $\|u_n\|_{L^3_tL^6_x(0,\I)}$ for large $n$, a contradiction. 

If $t_\I^0=-\I$, then $u^0$ scatters as $t\to-\I$ by the same reason. Hence $u^0$ does not scatter as $t\to+\I$. If $d_Q(\vec u^0(t))>3\e$ on its maximal existence interval, then $\Sg(\ti u^0(t))=+1$ is preserved from $t=+\I$, so $\ti u^0$  is a global solution and a critical element, after time translation if necessary. Indeed, if it enters the interval $(3\e,R_*]$, but no deeper than that, then the ejection takes place and so the solution can never return to the $R_*$ distance for later time. 
Suppose now that $d_Q(\vec u^0(t_*))\le 3\e$ for some $t_*\in\R$. Then $\|u^0\|_{L^3_tL^6_x(-\I,t_*)}<\I$ implies that \eqref{nonlinearity decomposed} holds in $L^1_tL^2_x(-\I,t_*-t_n^0)$. Hence, the nonlinear profile decomposition becomes a good approximation of $u_n$ on that interval. Then taking $k$ and $n$ large enough, we infer from the perturbation lemma 
\EQ{\nn 
 d_Q(\vec u_n(t_*-t_n^0))\le d_Q(\vec u^0(t_*))+O(\e) \lec \e \ll R_*,}
contradicting that $d_Q(\vec u_n(0,\I))\subset[R_*,\I)$, since $t_*-t_n^0\to\I$. 

The remaining case $t_\I^0\in\R$ is also similar. If $u^0$ scatters for $t\to\I$, then the perturbation lemma implies that $\|u_n\|_{L^3_tL^6_x(0,\I)}$ is bounded for large $n$. Hence, $u^0$ does not scatter for $t\to\I$. On the other hand, the decomposition at $t=0$ gives 
\EQ{\label{init u^0}
 \frac{2}{3}\de_X \le\limsup_{n\to\I}d_Q(\vec u_n(0)) \le d_Q(\vec u^0(t_\I^0)) + O(\e),} 
and so $d_Q(\vec u^0(t_\I^0))>\de_X/2>\de_S\gg R_*$. Moreover, 
\EQ{ 
 K_s(\vec u^0(t_\I^0))=\lim_{n\to\I}K_s(\vec u^0(t_n^0))\ge 0,} 
and so we deduce from Lemma \ref{sign} that $\Sg(\vec u^0(t_\I^0))=+1$. 

By the same argument as above, $u^0$ is either a critical element or $d_Q(\vec u^0(t_*))\le 3\e$ for some $t_*>t_\I^0$. Then \eqref{init u^0} together with the above theorem implies that $d_Q(\vec u^0(t))\ge R_*$ for $t\le t_\I^0$. If $u^0$ does not scatter as $t\to-\I$, then $u^0(t_\I^0-t)$ is a critical element. Suppose that $u^0$ scatters as $t\to-\I$. Then the profile decomposition is a good approximation of $u_n$ on $(-\I,t_*-t_n^0)$ for large $n$. Then, by the perturbation lemma for large $k$ and $n$, 
\EQ{\nn 
  \e \gec d_Q(\vec u^0(t_*))+O(\e) \ge \limsup_{n\to\I}d_Q(\vec u_n(t_*-t_\I^0)) \ge R_*,}
which contradicts $\e\ll R_*$. 

Thus in conclusion, for some $T\in\R$ and $\sg\in\{\pm 1\}$, $u^0(\sg t+T)$ is a critical element. Since $u_n$ is a minimizing sequence, it implies also that the other components must vanish strongly in the linear profile decomposition. 
Therefore, on a subsequence, 
\EQ{
 \lim_{n\to\I}\|\vec u_n(T_n')-\vec u^0(t_n^0)\|_{\HH}=0,}
where $T_n'\ge 0$ is the time shift for \eqref{init adjust}. Both $T_n'$ and $t_n^0$ are bounded from above as $n\to\I$. If $t_n^0\to-\I$ then $u^0$ scatters for $t\to-\I$, and so $\|u_n\|_{L^3_tL^6_x(-\I,T_n')}$ is bounded for large $n$, by the local theory of the wave operator. 

Applying this to the sequence of solutions $u_n:=u^0(t+\t_n)$ for arbitrary $\t_n\to\I$, one obtains the precompactness of the forward trajectory of the critical element, and a contradiction from a localized (time-independent) virial identity together with the lower bound on $K_2$. These steps are simpler than those in Kenig-Merle~\cite{KM2} (see \cite{IMN} for NLKG). 
Thus we conclude that no solution $u$ satisfies \eqref{crit elm}, and therefore $E^\star=J(Q)+\e^2$.

\begin{thm}
For each $\e\in(0,\e_*]$, there exists $0<M(J(Q)+\e^2)<\I$ such that if a solution $u$ of NLKG on $[0,\I)$ satisfies $E(\vec u)\le J(Q)+\e^2$, $d_Q(\vec u(t))\ge R_*$ and $\Sg(\vec u(t))=+1$ for all $t\ge 0$, then $u$ scatters to $0$ as $t\to\I$ and $\|u\|_{L^3_tL^6_x(0,\I)}\le M$. 
\end{thm}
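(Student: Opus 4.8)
The plan is to run the Kenig--Merle concentration--compactness/rigidity scheme, using the one-pass theorem of Section~\ref{sec:no homo} in place of the Payne--Sattinger variational dichotomy, which is only available strictly below $J(Q)$. Let $\UU(\e,R_*)$ be the class of forward solutions with $E(\vec u)\le J(Q)+\e^2$, $d_Q(\vec u(t))\ge R_*$ and $\Sg(\vec u(t))=+1$ for all $t\ge0$, set $M(E):=\sup\{\|u\|_{L^3_tL^6_x(0,\I)}\mid u\in\UU(\e,R_*),\ E(\vec u)\le E\}$ and $E^\star:=\sup\{E: M(E)<\I\}$. Since $M(E)<\I$ for $E<J(Q)$ by~\cite{IMN}, it suffices to prove $E^\star=J(Q)+\e^2$, so assume towards a contradiction $E^\star<J(Q)+\e^2$.

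First I would pick a sequence $u_n\in\UU(\e,R_*)$ with $E(u_n)\to E^\star$ and $\|u_n\|_{L^3_tL^6_x(0,\I)}\to\I$, and time-translate so that $d_Q(\vec u_n(0))>\frac{2}{3}\de_X$ and $K_2(u_n(0))\gg\e_*^2$; this is possible because $d_Q(\vec u_n)\ge R_*$ forces an ejection via Lemma~\ref{2nd exit}, and an infinite Strichartz norm forces $K_2$ to be occasionally large (the argument behind \eqref{K2 int bd}, using Lemma~\ref{0freq scat} and Lemma~\ref{K lower bd}). Next, apply the Bahouri--G\'erard decomposition to $U(t)\vec u_n(0)$ and pass to nonlinear profiles $u^j_n(t)=u^j(t+t_n^j)$. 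Orthogonality of the $\HH$-norms, positivity of $G_2$, and $K_2(u_n(0))\gg\e^2$ force $G_2(u^j_n(0))<J(Q)$ for large $n$, hence $K_s(u^j_n(0))\ge0$ for $s=0,2$ and $E(u^j)>0$; by energy orthogonality all but at most one of the $u^j$ have $E(u^j)<J(Q)$ and therefore scatter, again by~\cite{IMN}. If every profile scattered, the long-time perturbation Lemma~\ref{PerturLem} together with the almost-orthogonality of the cubic nonlinearity \eqref{nonlinearity decomposed} would bound $\|u_n\|_{L^3_tL^6_x}$, a contradiction; so there is exactly one non-scattering profile $u^0$.

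The heart of the matter is to show that, after a time shift and possibly a time reversal, $u^0$ itself lies in $\UU(\e,R_*)$, i.e.\ is a \emph{critical element}. Splitting into the cases $t_\I^0=+\I$, $t_\I^0=-\I$, and $t_\I^0\in\R$, one side is controlled by the construction of the nonlinear profile, while for the other I would invoke the one-pass Theorem~\ref{no homo}: once $d_Q(\vec u^0)$ exceeds $R_*$ with $\Sg(\vec u^0)=+1$ realized, the solution can never return to within $R_*$ of $\pm Q$; and if $d_Q(\vec u^0)$ ever dipped to $\le 3\e$, the perturbation lemma would push $d_Q(\vec u_n)$ below $R_*$ at a time tending to $\I$, contradicting $u_n\in\UU(\e,R_*)$. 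Producing a critical element, applying the whole decomposition to arbitrary translates $u^0(\cdot+\t_n)$, $\t_n\to\I$, shows its forward trajectory is precompact in $\HH$. Then a time-independent localized virial identity of the form \eqref{loc virial*}---with the cut-off error controlled by the exterior energy exactly as in Section~\ref{sec:no homo}, and with the crucial lower bound $K_2(u^0(t))\ge\min(\ka_1(\de_*),\ka_0\|\na u^0\|_{L^2}^2)$ from Lemma~\ref{K lower bd} combined with Lemma~\ref{0freq scat} to preclude time-averaged vanishing of $\|\na u^0\|_{L^2}$---yields a contradiction, as in Kenig--Merle~\cite{KM2} (and \cite{IMN} for NLKG). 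Hence no critical element exists, $E^\star=J(Q)+\e^2$, and $M:=M(J(Q)+\e^2)<\I$; finiteness of this $H^1$-subcritical Strichartz norm gives scattering to $0$.

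I expect the main obstacle to be the extraction of the critical element \emph{inside} the class $\UU(\e,R_*)$: below $J(Q)$ the sign of $K_s$ is preserved by the Payne--Sattinger structure, but slightly above $J(Q)$ this fails, so one must instead use the one-pass theorem to prevent the non-scattering profile from wandering back toward $\pm Q$, and the perturbation lemma to transfer any such near-approach back to the minimizing sequence. A secondary technical point is the vanishing-kinetic-energy scenario for the virial functional near the origin of the $u$-component, which is precisely what Lemma~\ref{0freq scat} is designed to handle.
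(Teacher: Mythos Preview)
Your proposal is correct and follows essentially the same route as the paper: the Kenig--Merle scheme with the one-pass theorem replacing the Payne--Sattinger dichotomy, the time translation to \eqref{init adjust}, the Bahouri--G\'erard/nonlinear profile decomposition with the $G_2$-orthogonality argument to isolate a single non-scattering profile, the case analysis on $t_\I^0$ using Theorem~\ref{no homo} and Lemma~\ref{PerturLem} to extract a critical element in $\UU(\e,R_*)$, and the precompactness plus localized virial rigidity. The paper actually defers the final rigidity step to~\cite{IMN}, whereas you spell out the role of Lemmas~\ref{K lower bd} and~\ref{0freq scat} a bit more explicitly, but the argument is the same.
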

Note that the uniform Strichartz bound is valid only on the time interval where the solution is already away from $\pm Q$ by a fixed distance. 
Without it, one cannot have any uniform bound even for those solutions scattering for both  $t\to\pm\I$, since they can stay close to $\pm Q$ for arbitrarily long time. 

\section{Classification of the global behavior, proof of Theorem~\ref{thm:main}}
Fix $0<\e\le\e_*$ and let $\HH^\e:=\{\vec u\in \HH \mid E(\vec u)<J(Q)+\e^2\}$ 
be the initial data set. 
We can define the following subsets according to the global behavior of the solution $u(t)$ to the NLKG equation: for $\si=\pm$ respectively, 
\EQ{\label{eq:STB def}
 \pt \S_\si^{\e}=\{\vec u(0)\in\HH^\e \mid \text{$u(t)$ scatters as $\si t\to\I$}\},
 \pr \T_\si^{\e}=\{\vec u(0)\in\HH^\e \mid   \text{$u(t)$ trapped by $\{\pm Q\}$ for $\si t\to\I$}\}, 
 \pr \B_\si^\e=\{\vec u(0)\in\HH^\e \mid   \text{$u(t)$ blows up in $\si t>0$}\}.}
The trapping for $\T_+^\e$ can be characterized as follows, with any $R\in(2\e,R_*)$: 
\EQ{\nn 
 \exists T>0, \pq \forall t>T,\pq d_Q(\vec u(t))<R.}
Obviously those sets are increasing in $\e$, and have the conjugation property
\EQ{\nn 
 X_\mp^\e = \{(u(0),-\dot u(0))\in\HH^\e \mid \vec u(0)\in X_\pm^\e\},}
for $X=\S,\T,\B$. Moreover, $\S_+$ and $\T_+$ are forward invariant by the flow of NLKG, while $\S_-$ and $\T_-$ are backward invariant. We have proven in the previous sections that 
\EQ{\nn 
 \HH^\e = \S_+^\e \cup \T_+^\e \cup \B_+^\e = \S_-^\e \cup \T_-^\e \cup \B_-^\e,}
the disjoint union for each sign. 
It follows from the scattering theory that $\S_\pm^\e$ are open. 
We claim the same for $\B_\pm^\e$, which is not a general fact. 
By the energy estimate 
\EQ{\nn 
 \|\vec u(t)\|_{L^\I_t \HH(0,T)} \lec \|\vec u(0)\|_\HH+T\|u\|_{L^\I_t H^1_x(0,T)}^3,}
we deduce that if $t=T^*$ is the blow-up time then 
\EQ{\nn 
 \|u(t)\|_{\HH} \gec |T^*-t|^{-1/2}.}
Hence, from the integral inequality 
\EQ{\nn 
 \p_t^2\|u\|_{L^2_x}^2 = 2[\|\dot u\|_{L^2_x}^2-K_0(u(t))]
 \ge 6\|\dot u\|_{L^2_x}^2+2\|u\|_{H^1_x}^2-8E(\vec u),}
we deduce that $\p_t\|u\|_{L^2_x}^2=2\LR{u|\dot u}\to\I$  as well as $K_{0}(u(t))\to-\infty$ as $t\to T^*-0$. 
We claim that if $T^{**}<T^{*}$ is very close to blow-up time, then every solution starting in $B_{1}(u(T^{**}))$,
the unit-ball around $u(T^{**})$ relative to~$\HH$, necessarily blows up in the positive time direction. By the
 Payne-Sattinger argument, we only need to show that for any such solution,
$K_0(u(t))\le -\kappa<0$ for as long as it is defined.  It is clear that this condition will hold initially. 
By Lemma \ref{K lower bd} with the choice of $\e_*$ in \eqref{choice e R*}, we further see that it can be violated only if the solution returns to a neighborhood of $\pm Q$ of size $\de_S$. 
However, in that case necessarily $|\LR{u|\dot u} |\lec 1$, which is impossible since $\LR{u|\dot u}$ starts off very large and has to increase as long as $K_{0}(u(t))<0$. 
Thus, any solution close to $u$ around $t=T^{**}$ cannot come close to $\pm Q$ and has to blow up sooner or later, as claimed. 
Therefore,  $\B_\pm^\e$ are also open, so $\T_\pm^\e$ are relatively closed in $\HH^\e$. 

Since $\B^\e_+$ and $\S_+^\e$ are disjoint open, they are separated by $\T_+^\e$, that is, any two points from $\S_+^\e$ and $\B_+^\e$ cannot be joined by a curve without passing through $\T_+^\e$. 

In a small ball around $\pm Q$, it is easy to see by means of the linearized flow that the open intersections $\B^\e_\pm\cap \S_\mp^\e$, $\B^\e_+\cap \B^\e_-$ and $\S_+^\e\cap \S_-^\e$ are all non-empty for any $\e>0$. To obtain a solution $u$ in $\B_+^\e\cap \S_-^\e$, choose  initial data such that 
\EQ{\nn 
 \la(0)=0, \pq \dot\la(0)=k \th \e, \pq \vec\ga(0)=0,}
for some $0<\th \ll 1$. Then the energy constraint $E(\vec u)<J(Q)+\e^2$ is satisfied, and by the same argument as in Lemma \ref{2nd exit}, we have 
\EQ{\nn 
 |\la(t)-\la_0(t)| \lec e^{2k|t|}\th^2\e^2, \pq \|\ga(t)\|_E \lec \th\e+e^{2k|t|}\th^2\e^2,}
as long as $e^{k|t|}\th\e\ll\de_X$, where the free solution $\la_0$ is given in this case
\EQ{\nn 
 \la_0(t)=\sinh(kt)\th\e.}
Hence, $u(t)$ exits the $R_*$ neighborhood of $\pm Q$ both in $\pm t>0$. Since $\la(t)t>0$ for $\th\e\ll e^{k|t|\th}\th\e\ll\de_X$, we obtain $\Sg(\vec u(t))t<0$ after exiting, and so $u$ blows up in $t>0$ and scatters for $t\to-\I$. 
Therefore, $\B_\pm^\e\cap \S_\mp^\e$ are both non-empty. 

In the same way, we construct solutions in $\B_+^\e\cap \B_-^\e$ and $\S_+^\e\cap \S_-^\e$, respectively. 
Let $u^{(\pm)}$ be those solutions for which $\la$ are approximated by the free solutions 
\EQ{\nn 
 \la_0^{(\pm)}(t)=\pm\cosh(kt)\th\e.}
Then $u^{(+)}\in \B_+^\e\cap \B_-^\e$ and $u^{(-)}\in \S_+^\e\cap \S_-^\e$. 

For $\e \ll e^{kt}\th\e \ll \e^{1/2}$, the distances from the above solution $u\in \B_+^\e\cap \S_-^\e$ and from $\{\pm Q\}$ are estimated by 
\EQ{\nn 
 \|\vec u(t)-\vec u^{(+)}(t)\|_E \lec e^{2kt}\th^2\e^2+\th\e \ll \e,
 \pq d_Q(\vec u(t))\gec e^{kt}\th\e \gg \e }
Hence, there exists a curve $\Ga\subset \HH^\e$ joining $u(t)$ and $u^{(+)}(t)$ within the region $\Sg(\vec u)<0$ and $d_Q(\vec u)\gg\e$. Since $u(t)\in \S_-^\e$ and $u^{(+)}(t)\in \B_-^\e$, there exists a point $p_{0}\in \T_\e^-\cap \Ga$. Since the solution starting from $p_{0}$ enters the $3\e$-ball around $\pm Q$ as $t\to-\I$, and initially $p_{0}$ is much further away and $\Sg(p_{0})<0$, we conclude by the one-pass theorem that $p\in \B^\e_+$. Hence, $\T^\e_-\cap \B^\e_+$ is non-empty as well. In the same way, we can find a point on the curve connecting $u(t)$ and $u^{(-)}(t)$ for some $t<0$, which is in $\T^\e_+\cap \S_-^\e$. Therefore, $\T^\e_\pm\cap \B^\e_\mp$ and $\T^\e_\pm\cap \S_\mp^\e$ are both not empty. Taking the limit $\e\to+0$, it is easy to observe that they contain infinitely many points on different energy levels. 

Finally, $\T^\e_+\cap \T^\e_-\ni Q$ is of course not empty, but there are infinitely many points besides $Q$, 
which can be seen by restricting $\HH^\e$ to the hyperplane $\dot u=0$: 
\EQ{\nn 
 \HH^\e_{\dot u=0}:=\{(u,0)\mid J(u)<J(Q)+\e^2\}\ni u^{(\pm)}(0).}
Since $u^{(+)}(0)\in \B_\e^+$ and $u^{(-)}(0)\in \S_\e^+$, and there are infinitely many connecting curves in $\HH^\e_{\dot u=0}$, there are infinitely many points in $\T^\e_+\cap\HH^\e_{\dot u=0}$ separating $\B^\e_+$ and $\S^\e_+$. The symmetry $u(t)\mapsto u(-t)$ implies that 
\EQ{\nn 
 \T^\e_+\cap\HH^\e_{\dot u=0} \subset \T^\e_+\cap \T^\e_-.}
Thus we have shown that the nine solution sets are all non-empty. 

By using the scattering theory, we can prove that $\S^\e_+$ and the energy section
\EQ{
 \S^{=\e}_+:=\{\vec u\in\HH\mid \text{$U^N(t)\vec u$ scatters as $t\to\I$ },\ E(\vec u)=J(Q)+\e^2\}}
are both pathwise connected. To see this, let $\vec u_j\in \S^\e_+$ or $\S^{=\e}_+$ for $j=0,1$ and let $\vec u_{j+}$ be their asymptotic profiles 
\EQ{
 \|U^N(t)\vec u_j-U(t)\vec u_{j+}\|_E \to 0 \pq(t\to\I).}
There exists a continuous curve $\vec u_{\th+}:[0,1]\to\HH$ such that 
\EQ{
 \|\vec u_{\th+}\|_\HH^2 = (1-\th)\|\vec u_{0+}\|_\HH^2+\th\|\vec u_{1+}\|_\HH^2,}
as well as $u_\th$ solving NLKG on some $(T_\th,\I)$ such that 
\EQ{
 \|\vec u_\th(t)-U(t)\vec u_{\th+}\|_E \to 0 \pq(t\to\I).}
Moreover, $T_\th$ and $\vec u_\th(T_\th+1)$ are also continuous in $\th$. The scattering property implies that $\vec u_\th(t)\in\S_+$ for any $t>T_\th$, and 
\EQ{
 E(\vec u_\th)=\|\vec u_{\th+}\|_\HH^2/2=(1-\th)E(\vec u_0)+\th E(\vec u_1).}
Let $T:=\sup_{0\le\th\le 1}T_\th +1<\I$, 
then $\vec u_j$ and $\vec u_j(T)$ are connected by the flow, while $\vec u_0(T)$ and $\vec u_1(T)$ are connected by $\vec u_\th(T)$, all included in $\S^\e_+$ or $\S^{=\e}_+$. This proves the connectedness of those sets. 

So far, all arguments of this section have been independent of the spectral gap property~\eqref{hyp:spec}. 
Now let us assume it for further investigation. Then $\T_+^\e$ contains the center-stable manifold for the $t\ge0$ direction, as constructed in Proposition~\ref{prop:mfd}.
In fact,  it consists of the maximal backward extension of all solutions on this manifold. The linear approximation is the hyperplane $k\la=-\dot\la$. Hence, the classification into those nine sets looks like $\otimes$ in the $(\la,\dot\la)$ phase plain around $Q$. Since the flow is $C^\I$ in both directions, $\T_\pm^\e$ are indeed connected, $1$-codimensional and smooth manifolds in the energy space. Moreover, a small ball around each point of $\T_+^\e$ is separated by the hypersurface $\T_+^\e$ into $\S_+^\e$ and $\B_+^\e$. 

Next we prove that $\T_+^\e$ consists of two connected components $\T_{+(\pm)}^\e$ defined by 
\EQ{
 \T_{+(\si)}^\e=\{\vec u\in\HH^\e \mid \text{$U^N(t)\vec u$ is trapped by $\si Q$ as $t\to\I$}\} .}
 To see that $\T_{+(+)}^\e$ is pathwise connected, take any couple of points and send them by the flow to the future where they are in $3\e$ distance from $Q$. Let $\vec u_j$ with $j=0,1$ be those two after translation, and let $\vec\ga_{j}$ be their asymptotic profiles 
\EQ{
 \|U^N(t)\vec u_j-\vec Q-U(t)\vec\ga_{j}\|_E\to 0 \pq (t\to\I), \pq E(\vec u_j)=J(Q)+\|\ga_{j}\|_\HH^2/2.}
There is a continuous $\vec\ga_{\th}:[0,1]\to\HH$ such that 
$\|\vec\ga_\th\|_\HH^2=(1-\th)\|\vec\ga_0\|_\HH^2+\th\|\vec\ga_1\|_\HH^2$. 
Let $\la_\th:=(1-\th)\LR{u_0-Q|\r}+\th\LR{u_1-Q|\r}$. Proposition \ref{prop:mfd} gives a continuous family of NLKG solutions $u_\th$ such that $\LR{u_\th(0)-Q|\r}=\la_\th$ and $\vec u_\th(t)-\vec Q-U(t)\vec\ga_\th\to 0$ as $t\to\I$. Then $\vec u_0$ and $\vec u_1$ are connected by the curve $\vec u_\th(0)$ in $\T_{+(+)}^\e$.

To see the separation of $\T_{+(+)}^\e$ from $T_{+(-)}^\e$, suppose for contradiction that $p$ is a common point of their boundaries. Then $p\in\T_+^{2\e}$ by its relative closedness, so $U^N(t)p$ scatters to $\si Q$ with $\si=\pm$. Then its small neighborhood can be mapped by the flow into a ball around $\si Q$. But it contains a point in $\T_{+(-\si)}^\e$, contradicting the one-pass theorem. Thus $\T_+^\e$ consists of two connected components $\T_{+(\pm)}^\e$. 

We can observe also that 
\EQ{
 \p\S_+^\e = \T_+^\e \cup \T_+^{=\e} \cup \S_+^{=\e}}
is connected. $\subset$ follows from that $\B_+^\e$ and $\{E(u)>J(Q)+\e^2\}$ are open. $\supset$ is easily seen by perturbing the data from the right. The connectedness follows from that of the following three sets 
\EQ{
 \T_{+(+)}^{<}\cup\T_{+(+)}^{=\e}, \pq \S_+^{=\e}\cup\T_+^{=\e}, \pq \T_{+(-)}^{<}\cup\T_{+(-)}^{=\e}.}

Kenig-Merle's solutions \cite{KM1,KM2} can be classified in our notation, see~\eqref{eq:STB def}, by 
\EQ{\nn 
  \S_+^{0}=\S_-^{0}, \pq \B_+^{0}=\B_-^{0}, \pq \T_+^{0}=\T_-^{0}=\emptyset}
(see \cite{IMN} for the case of NLKG). This statement essentially follows from our theorems, because the energy constraint $E(\vec u)<E(Q)$ prohibits the sign change of $K_s$ and scattering to $\pm Q$. The threshold case Theorem \ref{thm:threshold}, i.e., the analogue of Duyckaerts-Merle \cite{DM1,DM2}, requires a little more investigation in order to establish the uniqueness of the solutions in (5)-(9). 

\begin{proof}[Proof of Theorem \ref{thm:threshold}]
Let $\wt{X_\pm^0} = \Ca_{\e>0}X_\pm^\e$ for $X=\S,\B,\T$. Then Theorem \ref{thm:threshold} can be restated as 
\EQ{\nn 
 \pt  \wt{\S_+^0}\cap \wt{\B_-^0}= \wt{\S_-^0}\cap  \wt{\B_+^0}=\emptyset, \pq  \wt{\T_+^0}\cap \ \wt{\T_-^0}=\{\pm Q(t-s) \mid s\in\R\},
 \pr  \wt{\S_+^0} \cap  \wt{\T_-^0}=\{\pm W_-(t-s)\mid s\in\R\},
 \pq  \wt{\B_+^0}\cap  \wt{\T_-^0}=\{\pm W_+(t-s)\mid s\in\R\},
 \pr  \wt{\S_-^0}\cap  \wt{\T_+^0}=\{\pm W_-(s-t)\mid s\in\R\},
 \pq  \wt{\B_-^0}\cap  \wt{\T_+^0}=\{\pm W_+(s-t)\mid s\in\R\},}
for some solutions $W_\pm$. 
To see that $ \wt{\S_+^0}\cap  \wt{\B_-^0}=\emptyset$, let $u$ be a solution in $\wt{\S_+^0}\cap  \wt{\B_-^0}$. Theorem \ref{Lachesis} implies that there exists a finite interval $I_\e\subset\R$ for any $\e>0$, such that $\vec u^{-1}(B_{3\e}(\pm Q))=I_\e$. Then $I_\e$ is decreasing as $\e\to+0$, and hence by continuity of $\vec u$, there exists $t\in\R$ such that $\vec u(t)=\vec Q$, which implies $u\equiv Q$, a contradiction. 

Next, any solution $u$ in $ \wt{\T_+^0}$ corresponds to the case $\ga_\I\equiv 0$ in Proposition~\ref{prop:mfd}, and it  is therefore parametrized by $\la(0)\in\R$. Since $\la(t)\to 0$ as $t\to\I$, the uniqueness in the proposition implies that there are only three solutions converging to $Q$, modulo time translation: one with $\la>0$ decreasing (for large $t$), another with $\la<0$ increasing, and yet another with $\la\equiv 0$, i.e., $Q$. Let $W_\pm$ be the two solutions given by $\la(0)=\pm\nu/10$ and $\ga_\I\equiv 0$. 
Since $E(W_\pm)=J(Q)$ and $W_\pm\not=Q$, the sign of $K_0$ is fixed on their trajectories. Indeed, Lemma~\ref{2nd exit} applies to them backward from a neighborhood of $t=\I$, from which  we deduce that $\Sg(W_\pm(t))=\mp 1$ away from $t=\I$, and so $W_+\in \wt{ \B_-^0}$ and $W_-\in  \wt{\S_-^0}$. 
To prove that $W_\pm$ approach $Q$ exponentially, we return to~\eqref{inteq la red} and~\eqref{gaeq infty} with $\ga_\I=0$, estimating 
\EQ{
 \|(\la,\ga)\|_{X^k_T}:=\|e^{k\min(t-T,0)}\la(t)\|_{L^1\cap L^\I(0,\I)}+\|e^{k\min(t-T,0)}\ga(t)\|_{St(0,\I)}}
for $T\to\I$. The same argument as in the proof of Proposition \ref{prop:mfd} yields 
\EQ{
 \|(\la,\ga)\|_{X^k_T} \pt\lec e^{-kT}[|\la(0)|+\|(\la,\ga)\|_X^2] + \|(\la,\ga)\|_X\|(\la,\ga)\|_{X^k_T}
 \pr\lec e^{-kT}\nu + \nu \|(\la,\ga)\|_{X^k_T},}
and so $\|(\la,\ga)\|_{X^k_T} \lec e^{-kT}$, as desired. 
\end{proof}

\section*{Acknowledgments}
The second author wishes to thank Kyoto University, and especially Prof.\ Yoshio Tsutsumi, as well as Prof.\ Kenji Yajima from Gakushuin University, Tokyo,  
for their very generous hospitality. He was partially supported by a Guggenheim fellowship and the National Science Foundation, DMS--0653841. 

\appendix 
\section{Table of Notation}
{\small
\begin{longtable}{l|l|l}
 \hline 
 symbols & description & defined in \\
 \hline 
 $\HH,\HH^\e,\HH_{(\de)}$  & energy space and subsets& \eqref{def H},\eqref{def He},\eqref{def HS} \\
 \hline 
 $E(\vec u),J(u)$ & dynamic/static energy & \eqref{def E}, \eqref{def J} \\
 $K_s(u)$, $G_s(u)$& scaling derivatives of $J(u)$ & \eqref{def K},\eqref{def G} \\
 $N(v),N_\r(v),N_c(v)$ & higher order nonlinearity around $Q$ & \eqref{eq:KGL},\eqref{eq:Lgamma} \\
 $C(v)$ & higher terms in energy around $Q$ & \eqref{energy exp} \\
 $V_w(t)$ & localized virial & \eqref{loc virial} \\
 \hline 
 $\|\vec v\|_E$ & linearized energy norm & \eqref{def Enorm} \\
 $d_Q(\vec u)$ & nonlinear distance to $\pm Q$ & \eqref{def dQ} \\
 $\Sg(\vec u)$ & sign functional on $\HH_{(\de)}$ & Lemma \ref{sign} \\ 
 \hline 
 $L_+,\om$ & linearized operator around $Q$ & \eqref{def L+},\eqref{eq:Lgamma} \\
 $\r,k,P^+$ & ground state of $L_+$ & \eqref{def L+},\eqref{def P+} \\
 $v,\la,\ga,\la_\pm$ & components of $u$ around $\pm Q$ & \eqref{decop u},\eqref{def la+-} \\
 \hline 
 $\de_E$ & radius of energy nonlinearity & \eqref{def dE} \\
 $\de_X$ & terminal radius of ejection & Lemma \ref{2nd exit} \\
 $C_*$ & constant in a bound on $|K_s|$ & Lemma \ref{2nd exit} \\
 $\e_0(\de),\ka_0,\ka_1(\de)$ & constants for variational estimates & Lemma \ref{K lower bd} \\
 $\de_S$ & radius of variational estimates & Lemma \ref{sign} \\
 $M_*$ & energy bound in $\Sg=+1$ & Lemma \ref{Sg+ eng bd} \\
 $\e_*$ & energy threshold for one-pass theorem & \eqref{choice e R*} \\
 $\de_*$ & hyperbolic-variational threshold radius & \eqref{choice e R*} \\
 $R_*$ & exiting radius for one-pass theorem & \eqref{choice e R*} \\
 \hline 
\end{longtable}
}
Relation of the constants: 
\EQ{
 \pt 2C_*\de_S = \de_X \le \de_E \ll 1 \le C_*,  
 \pq \mu_0(M_*)>\mu\ll J(Q)^3, 
 \pr \e_*=\e_0(\de_S)<R_*/2\ll \min(\de_S,\ka_1(\de_S),\ka_0\mu^{1/2},J(Q)^{1/2}).}


\begin{thebibliography}{10}
\bibitem{BaG} Bahouri, H.,  G\'erard, P. {\em  High frequency approximation of solutions to critical nonlinear wave equations.}  Amer.\  J.\ Math.\ {\bf 121}  (1999),  no.~1, 131--175.

\bibitem{BJ1} Bates, P.\ W., Jones, C.\ K.\ R.\ T. {\em The solutions of the nonlinear Klein-Gordon equation near a steady state.}  Advanced topics in the theory of dynamical systems (Trento, 1987),  1--9, Notes Rep.\ Math.\ Sci.\ Engrg., 6, Academic Press, Boston, MA, 1989.

\bibitem{BJ2} Bates, P.\ W., Jones, C.\ K.\ R.\ T.\  {\em Invariant manifolds for semilinear partial differential equations. } Dynamics reported, Vol.~2,  1--38, Dynam. Report. Ser. Dynam. Systems Appl., 2, Wiley, Chichester, 1989.

\bibitem{Bec} Beceanu, M.  {\em  A Critical Centre-Stable Manifold for the Schroedinger Equation in Three Dimensions}, preprint, arXiv:0909.1180. 
\bibitem{BerLions} Berestycki, H., Lions, P.-L. {\em Nonlinear scalar field equations. I. Existence of a ground state. } Arch.\ Rational Mech.\ Anal.~82  (1983), no.~4, 313--345.

\bibitem{Biz} Bizo\'n, P.,  Chmaj, T.,  Tabor, Z.  {\em On  blowup for semilinear wave equations with a focusing nonlinearity.}  Nonlinearity {\bf 17}  (2004),  no.~6, 2187--2201.
\bibitem{Bren1} Brenner, P. {\em On space-time means and everywhere defined scattering operators for nonlinear Klein-Gordon equations.} Math.\ Z.\ {\bf 186} (1984), no. 3, 383--391.
\bibitem{Bren2} Brenner, P. {\em 
On scattering and everywhere defined scattering operators for nonlinear Klein-Gordon equations.}  J.\ Differential Equations {\bf 56}  (1985),  no.~3, 310--344.
\bibitem{Chop} Choptuik, M.\ W.  {\em Universality and scaling in gravitational collapse of a massless scalar field}  Phys.\ Rev.\ Lett. {\bf 70} (1993), 9--12. 
\bibitem{ChopBiz}  Choptuik, M.\ W.,  Chmaj, T., Bizon, P.    {\em Critical Behaviour in Gravitational Collapse of a Yang-Mills Field},   Phys.\ Rev.\ Lett.\ {\bf 77} (1996), 424--427.  
\bibitem{Coff} Coffman, C. {\em Uniqueness of the ground state solution for $\Delta u-u+u^{3}=0$ and a variational characterization of other solutions.}
  Arch. Rational Mech. Anal.  46  (1972), 81--95.


\bibitem{DS} Demanet, L., Schlag, W.  {\em 
Numerical verification of a gap condition for a linearized nonlinear Schr\"odinger equation.} 
Nonlinearity {\bf 19} (2006), no. 4, 829--852. 
\bibitem{DM1} Duyckaerts, T.,  Merle, F. {\em Dynamic of threshold solutions for energy-critical NLS.}  Geom.\ Funct.\ Anal.\ {\bf 18}  (2009),  no.~6, 1787--1840. 
\bibitem{DM2} Duyckaerts, T.,  Merle, F. {\em Dynamics of threshold solutions for energy-critical wave equation.}  Int.\ Math.\ Res.\ Pap. IMRP  2008
\bibitem{DR} Duyckaerts, T., Roudenko, S.  {\em Threshold solutions for the focusing 3D cubic Schr\"odinger equation.} Rev. Mat. Iberoamericana {\bf 26} (2010), no.~1, 1--56.  
\bibitem{GV1} Ginibre, J., Velo, G. {\em The global Cauchy problem for the nonlinear Klein-Gordon equation.}  Math.\ Z.\ {\bf 189}  (1985),  no.~4, 487--505. 
\bibitem{GV2}  Ginibre, J., Velo, G. {\em  Time decay of finite energy solutions of the nonlinear Klein-Gordon and Schr\"oinger equations.}  Ann.\ Inst.\ H. Poincar\'e Phys. Th\'eor. {\bf 43}  (1985),  no.~4, 399--442.
\bibitem{GSS1} Grillakis, M.,  Shatah, J.,  Strauss, W. {\em Stability theory of solitary waves in the presence of symmetry. I.}  J.\ Funct.\ Anal. {\bf 74}  (1987),  no.~1, 160--197.
\bibitem{GSS2} Grillakis, M.,  Shatah, J.,  Strauss, W. {\em Stability theory of solitary waves in the presence of symmetry. II.}  J.\ Funct.\ Anal.\ {\bf 94}  (1990),  no. 2, 308--348.
\bibitem{HPS} Hirsch, M.\ W.,  Pugh, C.\ C.,  Shub, M. {\em Invariant manifolds.} Lecture Notes in Mathematics, Vol.~583. Springer-Verlag, Berlin-New York, 1977.
\bibitem{HR} Holmer, J.,  Roudenko, S. {\em A sharp condition for scattering of the radial 3D cubic nonlinear Schr\"odinger equation.}  Comm.\ Math.\ Phys.\ {\bf 282}  (2008),  no.~2, 435--467.
\bibitem{IMN} Ibrahim, S., Masmoudi, N., Nakanishi, K. {\em Scattering threshold for the focusing nonlinear Klein-Gordon equation}, preprint, arXiv:1001.1474.
\bibitem{Jorgens} J\"orgens, K. {\em Das Anfangswertproblem im Grossen f\"ur eine Klasse nichtlinearer Wellengleichungen.} Math. Z. {\bf 77} (1961) 295--308. 
\bibitem{KaS} Karageorgis, P.,  Strauss, W. {\em 
Instability of steady states for nonlinear wave and heat equations.}  
J.\ Differential Equations {\bf 241} (2007), no. 1, 184--205. 
\bibitem{KM1} Kenig, C., Merle, F. {\em  Global well-posedness, scattering and blow-up for the energy-critical, focusing, non-linear Schr\"odinger equation in the radial case.}  Invent.\ Math. {\bf 166}  (2006),  no.~3, 645--675
\bibitem{KM2} Kenig, C., Merle, F. {\em Global well-posedness, scattering and blow-up for the energy-critical focusing non-linear wave equation.}  Acta Math.\ {\bf 201}  (2008),  no.~2, 147--212.
\bibitem{KrS1}  Krieger, J., Schlag, W. {\em Stable manifolds for all monic supercritical focusing nonlinear Schr\"odinger equations in one dimension. } J.\ Amer.\ Math.\ Soc. {\bf 19}  (2006),  no.~4, 815--920. 
\bibitem{KrS2} Krieger, J., Schlag, W. {\em On the focusing critical semi-linear wave equation.}  Amer.\ J.\ Math.\ {\bf 129}  (2007),  no.~3, 843--913.
\bibitem{LiW} Li, C.,  Wiggins, S. {\em  Invariant manifolds and fibrations for perturbed nonlinear Schr\"odinger equations.} Applied Mathematical Sciences, 128. Springer-Verlag, New York, 1997.
\bibitem{MoSt} Morawetz, C.~S. Strauss, W.~A. {\em Decay and scattering of solutions of a nonlinear relativistic wave equation.} Comm. Pure Appl. Math. {\bf 25} (1972), 1--31. 

\bibitem{NakS} Nakanishi, K., Schlag, W. {\em Global dynamics above the ground state energy for the  cubic NLS equation in $\R^{3}$}, preprint 2010. 

\bibitem{PS} Payne, L.\ E., Sattinger, D.\ H.\ {\em 
Saddle points and instability of nonlinear hyperbolic equations.}
Israel J.\ Math.\ {\bf 22} (1975), no. 3-4, 273--303. 
\bibitem{Pech} Pecher, H. {\em Low energy scattering for nonlinear Klein-Gordon equations.}  J.\ Funct.\ Anal. {\bf 63}  (1985),  no.~1, 101--122.
\bibitem{S} Schlag, W. {\em Stable manifolds for an orbitally unstable nonlinear Schr\"odinger equation.}  Ann.\ of Math.\ (2) {\bf 169}  (2009),  no.~1, 139--227. 
\bibitem{Segal} Segal, I.~E. {\em The global Cauchy problem for a relativistic scalar field with power interaction.} Bull. Soc. Math. France {\bf 91} (1963) 129--135. 

\bibitem{Strauss77} Strauss, W.\ A. {\em Existence of solitary waves in higher dimensions.}  Comm.\ Math.\ Phys.\  55  (1977), no.~2, 149--162.

\bibitem{Strauss} Strauss, W.\ A. {\em Nonlinear wave equations.} CBMS Regional Conference Series in Mathematics, 73. Published for the Conference Board of the Mathematical Sciences, Washington, DC; by the American Mathematical Society, Providence, RI,  1989.
\bibitem{V} Vanderbauwhede, A. {\em Centre manifolds, normal forms and elementary bifurcations.}  Dynamics reported, Vol. 2,  89--169, Dynam.\ Report.\  Ser.\ Dynam.\ Systems Appl., 2, Wiley, Chichester, 1989.
\bibitem{Y} 
K.~Yajima, {\it The $W^{k,p}$-continuity of wave operators for Schr\"odinger operators.} J.\ Math.\ Soc.\ Japan {\bf 47} (1995), no.~3, 551--581.
\end{thebibliography}
\end{document}